\newlist{paraenum}{enumerate}{1}
\setlist[paraenum]{wide, label=(\arabic*)}
\patchcmd{\@setaddresses}{\indent}{\noindent}{}{}
\patchcmd{\@setaddresses}{\indent}{\noindent}{}{}
\patchcmd{\@setaddresses}{\indent}{\noindent}{}{}
\patchcmd{\@setaddresses}{\indent}{\noindent}{}{}
\numberwithin{equation}{section}
\theoremstyle{plain}
\newtheorem{theorem}{Theorem}[section]
\newtheorem{conjecture}[theorem]{Conjecture}
\newtheorem{corollary}[theorem]{Corollary}
\newtheorem{lemma}[theorem]{Lemma}
\newtheorem{proposition}[theorem]{Proposition}
\newtheorem{problem}[theorem]{Problem}
\theoremstyle{definition}
\newenvironment{example}
  {\pushQED{\qed}\examplex}
  {\popQED\endexamplex}
\newtheorem{definition}[theorem]{Definition}
\newcommand{\cg}{\mathcal{G}}
\author{Hannah Göbel}
\address{Technische Universit\"at M\"unchen, 85748 Garching b. München, Boltzmannstr. 3.,  Germany}
\email{hannah\_goebel@web.de}
\author{Pratik Misra}
\address{Technische Universit\"at M\"unchen, 85748 Garching b. München, Boltzmannstr. 3.,  Germany}
\email{pratik.misra@tum.de}
\title[Linear relations of colored Gaussian cycles]%
{Linear relations of colored Gaussian cycles}
\keywords{}
\subjclass{}
\begin{document}

\begin{abstract}
A colored Gaussian graphical model is a linear concentration model in which equalities among the concentrations are specified by a coloring of an underlying graph. Marigliano and Davies conjectured that every linear binomial that appears in the vanishing ideal of a undirected colored cycle corresponds to a graph symmetry. We prove this conjecture for $3,5,$ and $7$ cycles and disprove it for colored cycles of any other length. We construct the counterexamples by proving the fact that the determinant of the concentration matrices of two colored paths can be equal even when they are not identical or reflection of each other. We also explore the potential strengthening of the conjecture and prove a revised version of the conjecture.
\end{abstract}

\maketitle

\section{Introduction}

Graphical models are multivariate statistical models where the conditional independence relations among the variables are encoded by a graph. Different types of graphs (undirected, directed, mixed) can be used to encode these relations. For an undirected graph $G$ with vertex set $[n]=\{1,2,.\ldots,n\}$ and edge set $E$, the associated Gaussian graphical model is a linear concentration model where the entries of the concentration matrices corresponding to the non edges of $G$ are zero. These models are widely used throughout computational biology to model gene interactions \cite{TohHorimoto2002} and in environmental psychology to model community attitudes towards sustainable behaviors \cite{BMS+2019}.

As Gaussian graphical models are parametrized models, a natural question of interest is to understand the defining equations of the model by using the structure of the graph. For instance, in \cite{GeigerMeekSturmfels2006}, \cite{MisraSullivant2021}, and \cite{MisraSullivant2022}, the authors characterize the graphs for which the corresponding defining equations of the model form a toric ideal. However, these computations become practically intractable when the size of the graph increases. Thus, it can be useful to introduce additional symmetries in the concentrations when certain partial correlations interact in the same way \cite{WitAbbruzzo2015}.

In \cite{HojsgaardLauritzen2008}, Hojsgaard and Lauritzen introduced different types of graphical models where they included additional symmetries among the parameters. These symmetries are encoded by coloring the edges and vertices of the graph, i.e., we set two entries of the concentration matrix equal to one another if their corresponding edges or vertices have the same color. These models are used to study gene regulatory networks wherein one imposes symmetries among genes with similar expression patterns \cite{TohHorimoto2002, Vinciotti+2016}. From a computational perspective, introducing the symmetries reduces the dimension of the model, which in turn makes certain computations more feasible. 
 
In \cite{SturmfelsUhler2010}, Sturmfels and Uhler did a computational study on the algebraic properties of colored Gaussian graphical models whose underlying graph is the $4$-cycle. In \cite{Coons+2023}, the authors characterized the graphs and their coloring for which the vanishing ideal were toric. In this paper, we focus on the work done by Marigliano and Davies in \cite{DaviesMarigliano:2021}, where they study the linear relations of the colored Gaussian cycles. In particular, they proved that a linear binomial lies in the vanishing ideal of a uniformly colored Gaussian cycle if and only if there exists a corresponding graph symmetry, and conjectured it to be true for any arbitrary coloring. We prove that the conjecture is true for $3,5$ and $7$-cycles and show that it is not true in general by constructing counterexamples of colored cycles of both even and odd length. In particular, the main results of the paper can be summarized by the following:

\begin{theorem}
Let $\cg$ be a colored $n$-cycle. If $n$ is $3,5,$ or $7$, then every linear binomial in the vanishing ideal $I_\cg$ corresponds to a graph symmetry. However, if $n$ is $4,6,8$ or larger, then there can exist colored cycles whose vanishing ideal contain linear binomials that do not correspond to any graph symmetry.  
\end{theorem}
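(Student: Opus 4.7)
The plan is to reduce the question to a statement about determinants of concentration matrices of colored paths, and then to handle the two halves of the theorem separately: a rigidity argument in the small cases, and an explicit family of counterexamples in the remaining cases.

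First I would set up the reduction via the cofactor formula $\sigma_{ij} = \pm \det(K^{(i,j)})/\det(K)$, where $K^{(i,j)}$ is the minor of the concentration matrix $K$ obtained by deleting row $i$ and column $j$. For the cycle $C_n$, such a minor factors, after permutation, as a product of determinants of (symmetric, tridiagonal) concentration matrices attached to colored subpaths of $C_n$. Consequently, a linear binomial lies in $I_\cg$ exactly when two such products of path determinants coincide as polynomials in the concentration parameters, while a graph symmetry of $\cg$ corresponds to the underlying subpaths being identical as colored paths or reflections of one another. Under this dictionary the conjecture becomes the purely combinatorial-algebraic statement: equality of the determinant polynomials of two colored paths forces the paths to be identical or mirror images.

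For $n\in\{3,5,7\}$ the subpaths that can arise have length at most $6$. The determinant $D_\ell$ of such a tridiagonal matrix satisfies the recurrence $D_\ell = a_\ell D_{\ell-1} - b_{\ell-1}^{\,2}\,D_{\ell-2}$, where the $a_i$ are the vertex parameters and the $b_i$ are the edge parameters inherited from the coloring. I would argue that the coefficients of certain distinguished monomials of $D_\ell$ read off, in order, the multiset of $a_i$'s and the multiset of $b_i^{\,2}$'s together with enough positional information to reconstruct the colored path up to reflection; for paths of length at most $6$ no "accidental" cancellation among these monomials can hide the coloring data, so a finite case analysis (organized by which of the $a_i$ or $b_i$ are identified by the coloring) will suffice. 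Combined with the reduction above, this yields the conjecture for $n=3,5,7$.

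For the counterexamples I would begin with the $n=4$ case suggested by the work of Sturmfels and Uhler, exhibiting two colored paths of length three with non-symmetric colorings yet equal determinants; this immediately gives a linear binomial in $I_\cg$ that does not come from a graph symmetry. I would then propagate the construction to larger $n$ by an inductive padding procedure: given a counterexample on $n-2$ vertices, attach a pair of symmetrically colored edges in such a way that the two path determinants controlling the binomial remain equal but no new automorphism of the colored cycle is introduced. Separate small base cases would be produced and verified by direct determinant computation for $n=4,6,8$ on the even side and for $n=9$ on the odd side (since the positive result rules out $n=5,7$), together covering all lengths claimed by the theorem. In each instance one verifies, first, that the asserted linear binomial truly vanishes on the model by checking that the two path determinants agree polynomially, and, second, that no automorphism of the colored cycle exchanges the relevant pair of entries.

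The main obstacle will be the rigidity assertion for $n\in\{3,5,7\}$: a computer algebra check settles it case by case, but a conceptual proof requires understanding precisely when two tridiagonal determinants in the concentration parameters can coincide after some of those parameters are identified by a coloring. Once the rigidity step is complete, the counterexamples are combinatorially routine to exhibit and verify; the only delicate point is ensuring that the padding step in the induction does not create an unintended symmetry that would retroactively "explain" the binomial and thereby spoil the counterexample.
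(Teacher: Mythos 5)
Your overall architecture (expand $\sigma_{ij}$ into contributions from the two paths joining $i$ and $j$, study when two colored tridiagonal determinants coincide, then split into a rigidity half and a counterexample half) matches the paper's. But the central reduction you rely on is false, and its falsity is not a technicality: it is precisely the phenomenon that produces the counterexamples. You claim that under your dictionary the conjecture becomes ``equality of the determinant polynomials of two colored paths forces the paths to be identical or mirror images,'' and you propose to verify this for paths of length at most $6$ by a finite case analysis. That rigidity statement holds only for paths on $2$ or $3$ vertices (Lemma~\ref{Lemma:Pathlength 1 and 2}); for paths on $4$ or more vertices there are colorings that are neither identical nor reflections yet have equal determinants (Theorems~\ref{theorem:pathDetTypeEven} and~\ref{theorem:PathDetTypeOdd}: swap the two alternating vertex colors, or swap the two alternating edge colors). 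Since a $7$-cycle forces you to compare subpath determinants on $4$ and $5$ vertices, your case analysis cannot close. The paper's proof of the $n=7$ case does not go through path rigidity alone: it first extracts $k_{12}=k_{67}$ from a degree count and then compares monomials of fixed vertex and edge degree inside the determinant identity, exploiting the fact that the two subpaths live inside the same $7$-cycle and therefore overlap --- information your reduction throws away.

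A second gap is the ``exactly when'' in your dictionary: the claim that $\sigma_{ij}-\sigma_{xy}\in I_\cg$ forces the two path expressions to agree termwise fails in general, because the shorter-path and complementary-path contributions to a single $\sigma_{ij}$ can cancel against each other (Example~\ref{example:CounterexampleConjEdges} has $\sigma_{15}-\sigma_{24}\in I_\cg$ for a $6$-cycle although $\det(K_{\setminus 1\leftrightarrow 5})\neq\det(K_{\setminus 2\leftrightarrow 4})$). For odd $n$ this is rescued by a parity argument --- every monomial of a path determinant on an odd (resp.\ even) number of vertices has odd (resp.\ even) vertex degree, so no cross-cancellation is possible (Lemma~\ref{lem:VertexCoeffOddEvenPath}, Proposition~\ref{proposition:NoCancelInOddCycle}) --- but your ``only if'' direction for $n=3,5,7$ is unjustified without it. The counterexample half of your plan is workable in outline and close to the paper's (glue two disjoint paths carrying the non-trivial equal-determinant colorings for even $n\ge 8$; use identical subpaths joined by edges whose individual colors break every candidate symmetry for $n=4$ and for odd $n$), but your $n=4$ base case as described --- two non-symmetric $3$-edge paths with equal determinants inside a $4$-cycle --- cannot occur, since the subpaths of a $4$-cycle have at most $3$ vertices and are therefore rigid; the $4$-cycle counterexample must instead come from the gluing obstruction.
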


The construction of these counterexamples depends on the various types of non-trivial path coloring whose concentration matrices end up in having the same determinant. We show that two colored paths can have the same determinant even though they are not identical nor reflection of each other.
We also analyze the potential ways to strengthen the conditions of the conjecture and study if it still holds. 

The paper is organized as follows: We start with some preliminaries on Gaussian graphical models and graph symmetries in Section \ref{Section:Preliminaries}. We also state the conjecture and some existing results that will be used in the later sections. The analysis of the conjecture relies heavily on Theorem 1 of \cite{JonesWest:2005}, which allows us to view the covariance between $i$ and $j$ as the sum over the two paths between them in any cycle. Thus, Section~\ref{section:analysis of path graphs} is dedicated to this path analysis, where we first derive a formula for the determinant of the concentration matrix of a path in terms of the disjoint edge sets of the path. We then focus on identifying the color conditions under which the determinant of the concentration matrices of two paths are equal. 
We prove that the conjecture is true for $3,5,$ and $7$-cycles and end the section by finding two non-trivial color configurations each for paths with even and odd number of vertices, such that the determinant of their corresponding concentration matrices become equal.
Using the non-trivial path color configurations from Section \ref{section:analysis of path graphs}, we construct counterexamples to Conjecture~\ref{Conj:ifandonlyif} of both even and odd length in Section \ref{section:counterexamples and strengthening}. 
We also investigate the possible ways to strengthen the conditions of the conjecture. In particular, we find counterexamples which shows that the conjecture does not hold for uniform vertex coloring as well. However, this also leads us to state and prove a revised version of the conjecture, which is as follows:
\begin{theorem}[The revised conjecture]\label{theorem:revisedConjecture}
Let $\cg$ be any colored $n$-cycle with uniform edge coloring, where $n$ is odd. Then a linear binomial lies in $I_\cg$ if and only if there is a corresponding symmetry in $\cg$. 
\end{theorem}

We end the paper with Section~\ref{Sec:Discussion and open problems}, where we continue to look for other potential non-trivial path color configurations apart from the ones obtained in Section \ref{section:analysis of path graphs}. We conjecture that there can be no other non-trivial path color configurations and end the section by posing a general version of the problem.


\section{Preliminaries}\label{Section:Preliminaries}

\textbf{Gaussian graphical models}: Let \(X = (X_1, X_2,..., X_n) \in \mathbb{R}^n\)\ be a random vector, which is distributed according to a multivariate Gaussian, \(X \sim \mathcal{N}(\mu, \Sigma)\), where $\mu = (\mu_1, \mu_2, \dots, \mu_n) \in \mathbb{R}^n$ is the vector of means and \(\Sigma \in \mathbb{R}^{n\times n}\) is the symmetric, positive definite covariance matrix.
To capture the conditional dependencies among the components of \(X\), we define an undirected graph \( G = (V, E) \) with vertex set \( V = \{1, 2, \dots, n\} \) and edge set $E$, where each vertex \(i\) corresponds to a variable \(X_i\). An edge is placed between vertices \( i \) and \( j \) if and only if the corresponding random variables \( X_i \) and \( X_j \) are dependent. Conversely, if \( X_i \) and \( X_j \) are conditionally independent given all other variables, then there is no edge between \( i \) and \( j \). Throughout the paper, we assume that the edge set $E$ contains self loops at the vertices but no other loops or multi-edges.

\begin{definition}
Consider the linear space $\mathcal{L}_G$ of symmetric matrices $K=(k_{ij})$ in $\mathbb{R}^{n\times n}$ that satisfy the constraint that $k_{ij}=0$ if $\{i,j\}$ is not an edge in $G$. The \textit{Gaussian graphical model} $\mathcal{M}_G$ defined by the graph $G$ is the set of all multivariate Gaussian distributions on random variables $X_1,\ldots,X_n$ with mean $0$, whose concentration (inverse covariance) matrix $K$ lies in $\mathcal{L}_G$. The inverse linear space, $\mathcal{L}^{-1}_G=\overline{\{\Sigma \in \mathbb{R}^{n\times n} : \Sigma^{-1} \in \mathcal{L}_G \}}$ is the Zariski closure of the 
set of covariance matrices for $\mathcal{M}_G$.
\end{definition}

The entries of the positive definite covariance matrix $\Sigma \in \mathcal{L}^{-1}_G$ are the covariances between the random variables, i.e., $\sigma_{ij}=Cov(X_i,X_j)$. Similarly, any concentration matrix $K$ of a distribution in the model $\mathcal{M}_G$ is a positive definite matrix which can be written as a linear combination of linearly independent symmetric matrices,
\[
K = \sum_{\{i,j\} \in E} K_{ij} + \sum_{i \in V} K_{ii},
\]
where each $K_{ij}$ is the matrix with $k_{ij}$ in the $(i,j)^{th}$ and $(j,i)^{th}$ position and $0$ elsewhere. Any statistical model where the concentration matrix can be written as such a linear combination is also called a linear concentration model.
The entries of $K$ are called the partial correlations of the distribution and are useful for understanding the conditional independence constraints on the model. In particular, $k_{ij}=0$ implies that $X_i$ and $X_j$ are independent given all other random variables. 

Notation: For a given graph $G$, we denote $\Sigma$ as the generic $n\times n$ covariance matrix with entries $\sigma_{ij}$ and $K$ as the generic concentration of $G$ in $\mathcal{L}_G$ with entries $k_{ij}$. These variables satisfy the constraints imposed by the symmetry and positive definiteness of $\Sigma$ and $K$.

Now, the inverse linear space $\mathcal{L}^{-1}_G$ can be viewed as the algebraic variety of the kernel of a rational map, 
\[
\rho_G: \mathbb{R}[\Sigma] \rightarrow \mathbb{R}(K), \quad \rho_G(\sigma_{ij})= (i,j)^{th} \text{ entry of } K^{-1}.
\]
The kernel of $\rho_G$ is called the \textit{vanishing ideal} of the model $\mathcal{M}_G$ and is denoted by $I_G$. Note that $I_G$ is a homogeneous prime ideal as it is the kernel of a rational map. The dimension of the ideal is the number of free variables in $\mathbb{R}(K)$, which is $\#V + \#E$. 

\begin{example}
\label{ex:4cyclerhomap}
Let \(G\) be the $4$-cycle as shown in Figure \ref{figure:uncolored 4 cycle}. 
The corresponding concentration matrix of this graph is given by
    \[
    K =  \begin{pmatrix}
    k_{11} & k_{12} & 0 & k_{14}\\
    k_{12} & k_{22} & k_{23} & 0\\
    0 & k_{23} & k_{33} & k_{34}\\
    k_{14} & 0 & k_{34} & k_{44}
    \end{pmatrix}.
    \]
The rational map $\rho_G$ for the $4$-cycle is obtained by inverting $K$ and mapping $\sigma_{ij}$ to the $(i,j)^{th}$ entry of $K^{-1}$. For instance,
\begin{align*}
        \rho_G(\sigma_{11})
        &=  \sigma_{11} 
        =  \big(K^{-1}\big)_{11} 
        = (k_{22} k_{33} k_{44} - k_{22} k_{34}^2 - k_{23}^2 k_{44})/\det(K), \\
        \rho_G(\sigma_{12})
        &= \sigma_{12} 
        = \big(K^{-1}\big)_{12} 
        = (-k_{14} k_{23} k_{34} + k_{12} k_{34}^2 - k_{12} k_{33} k_{44})/\det(K), \text{ and so on.}
    \end{align*}
Computing the kernel of the $\rho_G$ map gives us the vanishing ideal $I_G$, which is generated by
\begin{eqnarray*}
I_G&=&\langle \sigma_{14}\sigma_{23}\sigma_{24}-\sigma_{13}\sigma_{24}^2-\sigma_{14}\sigma_{22}\sigma_{34}+\sigma_{12}\sigma_{24}\sigma_{34}+\sigma_{13}\sigma_{22}\sigma_{44}-\sigma_{12}\sigma_{23}\sigma_{44}, \\
&&\sigma_{13}\sigma_{14}\sigma_{23}- \sigma_{13}^2\sigma_{24}-\sigma_{12}\sigma_{14}\sigma_{33}+\sigma_{11}\sigma_{24}\sigma_{33}+\sigma_{12}\sigma_{13}\sigma_{34}-\sigma_{11}\sigma_{23}\sigma_{34} \rangle.    
\end{eqnarray*}
Note that $I_G$ is a homogeneous prime ideal of dimension $8$. This is because $\rho_G$ is a birational map with $\mathbb{R}(K)$ having $8$ free variables and there is no dependency among them.
\begin{figure}[H]
    \centering
    \begin{tikzpicture}[scale=0.75]
        \node[draw, circle, fill=black] (1) at (0, 2) {};
        \node[draw, circle, fill=black] (2) at (2, 2) {};
        \node[draw, circle, fill=black] (3) at (2, 0) {};
        \node[draw, circle, fill=black] (4) at (0, 0) {};
        
        \draw[thick, black] (1) -- (2);
        \draw[thick, black] (2) -- (3);
        \draw[thick, black] (3) -- (4);
        \draw[thick, black] (4) -- (1);
    
        \node at (-0.5, 2.6) {1};
        \node at (2.5, 2.6) {2};
        \node at (2.5, -0.6) {3};
        \node at (-0.5, -0.6) {4};
    \end{tikzpicture}
    \caption{$4$-cycle \label{figure:uncolored 4 cycle}}
    \end{figure}  
\end{example}


\textbf{Graph coloring and symmetry}: Additional symmetries can be introduced in the model when certain partial correlations interact in the similar way. This was first introduced in \cite{HojsgaardLauritzen2008} where the authors used colored graphs to represent these symmetries. For any given graph $G$, we assign colors to the vertices and edges of $G$ with the condition that the sets of vertex and edge colors are disjoint. Let $\lambda(i)$ and $\lambda(\{i,j\})$ denote the color of the vertex $i$ and edge $\{i,j\}$, respectively, and $\cg$ denote the colored graph. This allows us to introduce the symmetries in the linear space $\mathcal{L}_{\cg}$ in the following way:
\begin{enumerate}
    \item $k_{ij}=0$ if $\{i,j\}$ is not an edge in $\cg$,
    \item $k_{ii}=k_{jj}$ if $\lambda(i)=\lambda(j)$ in $\cg$,
    \item $k_{ij}=k_{xy}$ if $\lambda(\{i,j\})=\lambda(\{x,y\})$ in $\cg$.
\end{enumerate}

Note that introducing these additional symmetries in the model implies that $\mathcal{L}_{\cg}\subset \mathcal{L}_G$, as we reduce the dimension of the linear space by setting certain concentrations equal. This in turn gives us that $\mathcal{L}_{\cg}^{-1}\subset \mathcal{L}_{G}^{-1}$, which also implies that $I_{G}\subset I_{\cg}$ by the ideal-variety correspondence. We illustrate these containment in the following example.

\begin{example}\label{ex:4cycleColoredrhomap}
Let $\cg$ be the colored $4$-cycle as shown in Figure \ref{figure:colored 4 cycle}. The corresponding concentration matrix now has the following form:
    \[
    K =  \begin{pmatrix}
    \textcolor{red}{k_{11}} & \textcolor{blue}{k_{12}} & 0 & \textcolor{green}{k_{23}}\\
    \textcolor{blue}{k_{12}} &  \textcolor{red}{k_{11}} & \textcolor{green}{k_{23}} & 0\\
    0 & \textcolor{green}{k_{23}} & \textcolor{yellow}{k_{33}} & \textcolor{blue}{k_{12}}\\
    \textcolor{green}{k_{23}} & 0 & \textcolor{blue}{k_{12}} & \textcolor{yellow}{k_{33}}
    \end{pmatrix}.
    \]
We consider the variables $k_{11},k_{12},k_{23},$ and $k_{33}$ as the base variables and set $k_{22}=k_{11}, k_{44}=k_{33}, k_{34}=k_{12}$ and $k_{14}=k_{23}$. Computing the vanishing ideal $I_\cg$ gives us
\begin{eqnarray*}
I_\cg&=&\langle \sigma_{33}-\sigma_{44}, 
\sigma_{14}-\sigma_{23}, 
\sigma_{13}-\sigma_{24},
\sigma_{11}-\sigma_{22}, \\
&& \text{ one generator of degree }2, \text{ two generators of degree }3 \rangle,\\
&&\supsetneq I_G.
\end{eqnarray*}
The dimension of \(I_{\cg}\) drops to $4$ since there are only four free variables and the other variables are not independent.
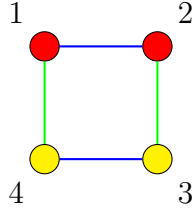
\begin{figure}[H]
    \centering
    \begin{tikzpicture}[scale=0.75]
        \node[draw, circle, fill=red] (1) at (0, 2) {};
        \node[draw, circle, fill=red] (2) at (2, 2) {};
        \node[draw, circle, fill=yellow] (3) at (2, 0) {};
        \node[draw, circle, fill=yellow] (4) at (0, 0) {};
        
        \draw[thick, blue] (1) -- (2);
        \draw[thick, green] (2) -- (3);
        \draw[thick, blue] (3) -- (4);
        \draw[thick, green] (4) -- (1);
    
        \node at (-0.5, 2.6) {1};
        \node at (2.5, 2.6) {2};
        \node at (2.5, -0.6) {3};
        \node at (-0.5, -0.6) {4};
    \end{tikzpicture}
    \caption{Colored $4$-cycle \label{figure:colored 4 cycle}}
\end{figure}  
\end{example}
Observe that in the previous example, we obtained some linear binomials in $I_{\cg}$ which were not present in $I_G$. In general, the only time we get linear relations in $I_G$ is when $G$ is a disconnected graph, and the linear relations are of the form $\sigma_{ij}$, where $i$ and $j$ are disconnected in $G$ (Proposition 2.5 \cite{DaviesMarigliano:2021}). This follows from the fact that all the $k_{ij}$s are independent variables in uncolored graphs. However, linear polynomials can show up in $I_\cg$ due to the dependency between concentrations. This observation was first recorded in \cite{SturmfelsUhler2010}. This study was further continued by Marigliano and Davies in \cite{DaviesMarigliano:2021}, where they analyzed the linear relations obtained in $I_{\cg}$ for colored cycles. Specifically, they made a connection between the linear binomials obtained in $I_\cg$ and the graph symmetries of the colored cycles. We thus include the definition and some properties of graph symmetry below for the sake of completion.

\begin{definition} 
\label{def:graphsym}
For a given colored graph $\cg$ and its concentration matrix $K$, a symmetry of $\cg$ is a permutation matrix $P$ such that $PKP^{-1}=K$.
\end{definition}

It is easy to see that such symmetries give rise to linear binomials which lie in $I_\cg$.

\begin{proposition}
\cite[Prop. 2.2]{DaviesMarigliano:2021}
\label{prop:symmetries}
Let $\cg$ be a colored graph and $P$ be a symmetry of $\cg$. If $\Sigma$ is a generic covariance matrix, then the linear binomials defined by all distinct entries of $P \Sigma P^{-1} - \Sigma$ belong to $I_\cg$.
\end{proposition}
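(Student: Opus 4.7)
The plan is to exploit the fact that the defining identity \(PKP^{-1}=K\) can be inverted to yield the corresponding identity \(P\Sigma P^{-1}=\Sigma\) on the variety \(\mathcal{L}_{\cg}^{-1}\), and then to observe that the entries of \(P\Sigma P^{-1}-\Sigma\) are automatically linear binomials in the \(\sigma_{ij}\) because \(P\) is a permutation matrix.

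First, I would interpret the defining identity from \cref{def:graphsym} as a matrix identity over the coordinate ring of the linear space \(\mathcal{L}_{\cg}\). Here \(K\) is the generic concentration matrix, whose entries are the free variables of \(\mathcal{L}_{\cg}\) subject to the zero constraints on non-edges and the equality constraints imposed by the coloring; saying that \(P\) is a symmetry amounts to saying that \(PKP^{-1}\) and \(K\) are equal as matrices over this coordinate ring. Since \(K\) evaluates to an invertible matrix at, for instance, the identity, its determinant is a nonzero polynomial in the free variables, and so \(K\) is invertible over the field of rational functions on \(\mathcal{L}_{\cg}\). Taking inverses of both sides of \(PKP^{-1}=K\) therefore yields \(PK^{-1}P^{-1}=K^{-1}\), that is, \(P\Sigma P^{-1}=\Sigma\) as an identity of matrices whose entries are rational functions in the free variables.

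Next, I would feed this identity through the rational map \(\rho_{\cg}:\mathbb{R}[\Sigma]\to\mathbb{R}(K)\) defined earlier. Since \(\rho_{\cg}\) sends \(\sigma_{ij}\) to the \((i,j)\)-entry of \(K^{-1}\), the established identity \(P\Sigma P^{-1}=\Sigma\) says precisely that each entry of the matrix of linear forms \(P\Sigma P^{-1}-\Sigma\) maps to \(0\) under \(\rho_{\cg}\). Hence every such entry lies in \(\ker \rho_{\cg}=I_{\cg}\).

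Finally, because \(P\) is the permutation matrix of some \(\pi\in S_n\), a direct computation gives \((P\Sigma P^{-1})_{ij}=\sigma_{\pi^{-1}(i)\pi^{-1}(j)}\), so each distinct entry of \(P\Sigma P^{-1}-\Sigma\) is either zero or a linear binomial of the form \(\sigma_{\pi^{-1}(i)\pi^{-1}(j)}-\sigma_{ij}\), which is exactly the shape asserted in the statement. There is no serious obstacle in this argument; the one point requiring care is to read the symmetry condition \(PKP^{-1}=K\) as an identity of symbolic matrices rather than as a numerical equality at a single point, so that passing to inverses is a legitimate operation.
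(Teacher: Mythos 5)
Your argument is correct and is exactly the standard one this result rests on: the paper does not reprove Proposition~\ref{prop:symmetries} but cites it from \cite{DaviesMarigliano:2021}, and the remark preceding it (``it is easy to see that such symmetries give rise to linear binomials'') alludes precisely to your inversion of $PKP^{-1}=K$ to $P\Sigma P^{-1}=\Sigma$ followed by applying $\rho_\cg$. Your care in reading the symmetry condition as a symbolic identity over the function field of $\mathcal{L}_\cg$ (with $\det K\neq 0$ since the identity matrix lies in $\mathcal{L}_\cg$) is the right way to make the inversion step rigorous.
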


Now, a colored graph $\cg$ is said to have a uniform coloring if every vertex has the same color and every edge has the same color. In \cite{DaviesMarigliano:2021}, the authors focused on specific graphs with uniform coloring and identified all the linear binomials in $I_\cg$. Specifically, they proved the following theorem:

\begin{theorem}
\cite[Theorem 3.4]{DaviesMarigliano:2021}
\label{Theo:UniformColoring}
Let $C_n$ be the $n$-cycle of uniform coloring. Then the linear part of $I_{C_n}$ is induced by symmetries and consists of the relations: 
        \[
        \sigma_{11+d} - \sigma_{ii+d}, \qquad \text{for } i \in \{2,...,n\} \text{ and } d \in \{0,..., \left\lfloor \frac{n}{2} \right\rfloor \},
        \]
        where all indices are taken modulo n.
\end{theorem}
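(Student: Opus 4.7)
The backward direction is immediate from Proposition~\ref{prop:symmetries}. The plan therefore focuses on the forward direction: every linear binomial $\sigma_{ij}-\sigma_{kl}$ (or $\sigma_{ii}-\sigma_{jj}$) lying in $I_\cg$ must arise from a symmetry. The core moves are to apply the Jones--West cofactor expansion, exploit the $\alpha$-parity that becomes available when $n$ is odd, and then reduce the problem to identifying pairs of paths with equal concentration determinants via Section~\ref{section:analysis of path graphs}.

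\textbf{Cofactor expansion and parity split.} I apply Theorem~1 of \cite{JonesWest:2005} to write
\[
\sigma_{ij}\det(K) \;=\; \epsilon_1\,\alpha^{\ell_1}\det K_{P_{2,\mathrm{int}}} \;+\; \epsilon_2\,\alpha^{\ell_2}\det K_{P_{1,\mathrm{int}}},
\]
where $P_1, P_2$ are the two paths from $i$ to $j$ in $\cg$ of lengths $\ell_1+\ell_2=n$, $\alpha$ is the single edge variable imposed by uniform edge coloring, $P_{t,\mathrm{int}}$ denotes the subpath on the interior vertices of $P_t$, and $\epsilon_t\in\{\pm1\}$. The standard tridiagonal recurrence for path determinants shows that each $\det K_{P_{t,\mathrm{int}}}$ is a polynomial in $\alpha^2$, so the two summands have $\alpha$-powers of disjoint parities precisely because $n$ is odd, forcing $\ell_1\not\equiv\ell_2\pmod 2$. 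Writing the analogous expansion of $\sigma_{kl}\det(K)$ in terms of paths $Q_1, Q_2$ of lengths $m_1, m_2$, the hypothesis $\sigma_{ij}=\sigma_{kl}$ splits along $\alpha$-parity into two polynomial identities; matching lowest $\alpha$-degrees forces $\{\ell_1,\ell_2\}=\{m_1,m_2\}$, and after relabeling by length these identities reduce to
\[
\det K_{P_{1,\mathrm{int}}} = \det K_{Q_{1,\mathrm{int}}}, \qquad \det K_{P_{2,\mathrm{int}}} = \det K_{Q_{2,\mathrm{int}}}.
\]

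\textbf{Recovering the symmetry.} I then invoke the analysis of Section~\ref{section:analysis of path graphs}: in the uniform-edge-coloring setting, colored paths of equal length with equal concentration determinants must be identical or reflections of each other. The two interior matches live on opposite arcs of $\cg$ and together cover every vertex except $\{i,j\}$ and $\{k,l\}$, so among the four combinations of (identity/reflection)$\times$(identity/reflection), those that glue consistently at the endpoints produce either a rotation or a reflection of $C_n$ sending $\{i,j\}$ to $\{k,l\}$. Vertex-color preservation comes from the path identification, edge-color preservation is automatic, and so the resulting map is a symmetry of $\cg$, which by Proposition~\ref{prop:symmetries} realizes the given binomial. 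The diagonal case $\sigma_{ii}-\sigma_{jj}$ is analogous: $\sigma_{ii}\det(K)=\det K_{C_n\setminus\{i\}}$ reduces the relation directly to an equality of two $(n-1)$-vertex path determinants, producing a rotation or reflection of $C_n$ mapping $i\mapsto j$.

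\textbf{Main obstacle.} The crux is the rigidity statement invoked at the start of the previous paragraph: equal-determinant colored paths of equal length, under uniform edge coloring, are identical or reflections. If Section~\ref{section:analysis of path graphs} establishes this as a corollary of its classification of path-determinant equalities, the rest of the argument is a clean combinatorial gluing. Otherwise the backup is to analyze the matching expansion $\det K_P = \sum_{M}(-1)^{|M|}\alpha^{2|M|}\prod_{u\notin V(M)}k_{uu}$ directly, extracting successive invariants of the color sequence (its multisets at even and odd positions, sums over adjacent color pairs, and so on) from the coefficients of $\alpha^{2k}$ and showing that these invariants determine the sequence up to reversal. This rigidity, together with the parity split, is the only place where both hypotheses (odd $n$ and uniform edge coloring) are essentially used.
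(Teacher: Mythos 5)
There is a genuine gap: you have, in effect, proved a different theorem. Theorem~\ref{Theo:UniformColoring} concerns the uniformly colored $n$-cycle for \emph{every} $n$, even and odd, whereas your argument is built on the hypothesis that $n$ is odd --- you invoke it explicitly (``the two summands have $\alpha$-powers of disjoint parities precisely because $n$ is odd'') and again in your closing paragraph. What you have sketched is essentially the paper's proof of the revised conjecture, Theorem~\ref{theorem:revisedConjecture} (uniform \emph{edge} coloring, $n$ odd), which proceeds exactly by the parity split of Lemma~\ref{lem:VertexCoeffOddEvenPath} and Proposition~\ref{proposition:NoCancelInOddCycle} followed by the path rigidity of Lemma~\ref{lemma:uniformEdgeNoNontrivial}. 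For even $n$ the parity split fails: the two arcs joining $i$ and $j$ have lengths of equal parity, so the two summands in the Jones--West expansion contribute monomials of the same $\alpha$-parity and can cancel against one another --- Example~\ref{example:CounterexampleConjEdges} exhibits precisely such a cancellation in a uniform-edge-colored $6$-cycle. Hence your reduction of $\sigma_{ij}=\sigma_{kl}$ to two separate path-determinant identities, and the step ``matching lowest $\alpha$-degrees forces $\{\ell_1,\ell_2\}=\{m_1,m_2\}$,'' are unjustified for half of the cases the theorem covers. (Note also that the paper offers no proof of this statement to compare against; it is quoted from \cite{DaviesMarigliano:2021}.)

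Two further points. First, the theorem asserts that the listed binomials constitute the \emph{entire} linear part of $I_{C_n}$, i.e.\ every linear form in the ideal is a combination of them; your proposal only treats binomial differences, and under full uniform coloring the rigidity step you lean on is vacuous (all paths of a given length carry identical colorings), so the actual content is the completeness claim, which for even $n$ hinges on controlling exactly the cancellations your parity argument was meant to exclude. Second, the rigidity statement you cite as coming from Section~\ref{section:analysis of path graphs} does not live there: Theorems~\ref{theorem:pathDetTypeEven} and~\ref{theorem:PathDetTypeOdd} of that section establish the \emph{opposite} for general colorings, and the uniform-edge-coloring rigidity you need is Lemma~\ref{lemma:uniformEdgeNoNontrivial}, proved later and only as part of the odd-cycle argument.
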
  

Based on the findings from the above theorem, the authors conjectured that the result would also hold for any colored cycle.

\begin{conjecture}[Conjecture 4.2 \cite{DaviesMarigliano:2021}]\label{Conj:ifandonlyif}
Let $\cg$ be a colored $n$-cycle. Then a linear binomial lies in $I_\cg$ if and only if there is a corresponding symmetry in $\cg$.
\end{conjecture}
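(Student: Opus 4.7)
The plan is to translate ``linear binomial in $I_\cg$'' into a polynomial identity on colored paths via Theorem~1 of Jones and West, prove a rigidity lemma for short colored paths that handles $n\in\{3,5,7\}$, and exploit non-trivial color configurations provided in Section~3 to produce counterexamples when $n \in \{4,6,8,9,10,\ldots\}$. To carry out the translation, in the $n$-cycle with arcs $P,Q$ joining vertices $i$ and $j$, the Jones--West formula expresses $\sigma_{ij}\cdot\det K$ as a signed sum $\pm w(P)\,\det K_{V\setminus V(P)} \pm w(Q)\,\det K_{V\setminus V(Q)}$, where $w(\cdot)$ is the product of edge weights along the arc and $K_{V\setminus V(P)}$ is the concentration matrix of the colored path complementary to $P$ inside the cycle. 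Thus $\sigma_{ij} - \sigma_{kl} \in I_\cg$ is equivalent to a homogeneous polynomial identity equating two such signed sums, and by the disjoint-edge-set determinant formula for colored paths developed in Section~3, each term is an explicit polynomial in the colored variables.

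For $n \in \{3,5,7\}$ I would prove a rigidity lemma: among colored paths on at most five vertices, the concentration-matrix determinant, viewed as a polynomial in the colored variables, determines the path up to reflection. Five is exactly the maximum size of a complementary path that can arise when $n \le 7$. Given rigidity, matching monomials in the polynomial identity forces the arc from $i$ to $j$ to agree up to reflection with the arc from $k$ to $l$, and likewise for the complementary arcs. Assembling the two matchings yields a permutation $\pi$ of vertices with $\pi K \pi^{-1} = K$, which is a graph symmetry, and the proposition in the preliminaries linking symmetries to linear binomials certifies that $\sigma_{ij}-\sigma_{kl}$ arises from it.

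For the remaining lengths $n \in \{4,6,8,9,10,\ldots\}$ I would construct counterexamples directly, starting from the non-trivial color configurations exhibited in Section~3: pairs of colored paths that are neither identical nor reflections of each other yet whose concentration matrices share the same determinant, with separate configurations available for both even and odd vertex counts. I would glue such a pair into a colored cycle $\cg$ of length $n$ by placing the two paths as the two arcs between a chosen vertex pair $\{i,j\}$, inserting a short uniformly-colored padding arc of the right length and parity if necessary. The Jones--West identity then automatically produces a linear binomial $\sigma_{ij}-\sigma_{i'j'}\in I_\cg$, while a check of the colored cycle's automorphism group, which must act on each arc by the identity or its reflection, rules out any graph symmetry inducing this binomial, since the two arcs are neither identical nor reflections.

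The principal obstacle is proving the rigidity lemma sharply. Rigidity must fail just beyond five vertices because explicit counterexamples exist for every $n\in\{4,6\}\cup\{8,9,10,\ldots\}$ and so rely on non-reflectional equal-determinant coincidences at slightly longer path lengths. My main effort would therefore be combinatorial: on one side, reading the color sequence off the disjoint-edge-set polynomial for short paths and showing that no accidental coincidences occur there; on the other, producing the minimal configurations where this reading breaks down so that the counterexample construction can be carried out for all remaining $n \geq 4$.
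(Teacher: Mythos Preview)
Your plan contains a genuine gap: the rigidity lemma you propose for $n=7$ is false. You claim that among colored paths on at most five vertices the determinant of the concentration matrix determines the path up to reflection, but the very non-trivial configurations from Section~3 that you intend to use for counterexamples already refute this at $m=4$ and $m=5$. Theorem~\ref{theorem:pathDetTypeEven} with $m=4$ and three pairwise distinct edge colors gives two $4$-vertex paths with equal determinant that are neither identical nor reflections, and Theorem~\ref{theorem:PathDetTypeOdd} with $m=5$ and $\lambda(p_2)\neq\lambda(p_4)$ does the same for $5$-vertex paths. Since the complementary paths appearing in a $7$-cycle can have four or five vertices, your rigidity step cannot go through as stated, and the inconsistency shows up in your own last paragraph, where you simultaneously assert rigidity up to five vertices and plan to exploit its failure.

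The paper does not rely on such a lemma for $n=7$. Instead it first uses an odd-cycle parity observation (each monomial in $\det(K_{\setminus i\leftrightarrow j})$ and in $\det(K_{\setminus i\overset{c}\leftrightarrow j})$ has vertex degree of opposite parity, so the two arc contributions to $\sigma_{ij}$ cannot interact) to force separately the equality of the shorter-arc terms and of the complementary-arc terms; this is what later becomes Proposition~\ref{proposition:NoCancelInOddCycle}. Then, rather than a standalone determinant-rigidity statement for $5$-vertex paths, it matches monomials of fixed edge and vertex degree in the two $5$-vertex determinants \emph{together with} the accompanying edge-product factor, extracting successively $k_{12}=k_{67}$, $k_{77}=k_{11}$, $k_{66}=k_{22}$, and so on until a reflection symmetry of the whole cycle is forced. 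So the extra information carried by the edge-product prefactor and the simultaneous constraints from both arcs are essential; determinant equality of the complementary paths alone is not enough. Your counterexample half is essentially right in spirit, though your ``place the two paths as the two arcs between a chosen vertex pair $\{i,j\}$'' is not quite the construction: one needs two vertex pairs $\{i,j\}$ and $\{x,y\}$ so that $K_{\setminus i\leftrightarrow j}$ and $K_{\setminus x\leftrightarrow y}$ realize the non-trivial equal-determinant pair, with the other pair of complementary paths arranged to match as well (see Example~\ref{CounterexampleConj6}).
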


Now, the sufficient condition of the conjecture follows from Proposition \ref{prop:symmetries}. Thus, we focus on the necessary condition of the conjecture and analyze if and when the condition is true. It is a well known fact that a permutation matrix $P$ satisfies $P A P^{-1} = A$ where $A$ is the adjacency matrix of a graph $G$ with $n$ vertices, if and only if $P$ corresponds to an element in $D_n$. As the concentration matrix $K$ can be seen as a weighted adjacency matrix, every graph symmetry of a colored $n$-cycle corresponds to a rotation or a reflection. We explain this connection of graph symmetry with reflection and rotation of an $n$-cycle and with the linear binomials in the example below.

\begin{example}
Let \(\cg\) be the colored $4$-cycle as shown in Figure \ref{figure:colored 4 cycle}. As seen Example~\ref{ex:4cycleColoredrhomap}, the vanishing ideal $I_\cg$ has four linear binomials:
\[
    \sigma_{33} - \sigma_{44}, 
    \sigma_{14} - \sigma_{23}, 
    \sigma_{13} - \sigma_{24},  \text{ and }
    \sigma_{11} - \sigma_{22}.
    \]
Now observe that $\cg$ has a graph symmetry, which is the reflection through the axis passing through edges \(\{1,2\}\) and \(\{3,4\}\). Thus, the corresponding permutation matrix is the one, which interchanges vertex \(\{1\}\) with \( \{2\}\) and vertex \(\{3\}\) with \(\{4\}\). Specifically, we get the following permutation matrix $P$ that satisfies $PKP^{-1}=K$:
\[
    P = \begin{pmatrix}
        0 & 1 & 0 & 0 \\
        1 & 0 & 0 & 0\\
        0 & 0 & 0 & 1\\
        0 & 0 &  1 & 0
        \end{pmatrix}.
\]
It can be easily checked that the four linear binomials are obtained from the equation $P\Sigma P^{-1}-\Sigma$ in this example.
\end{example}

Apart from the algebraic motivation to explore the connection between the linear binomials and symmetries of a colored cycle, there also lies a computational advantage in learning these linear binomials. Recall that in order to compute the vanishing ideal, we need to compute the kernel of the polynomial map $\rho_\cg$ which is described above. The domain of $\rho_\cg$ is all the covariances $\sigma_{ij}$ with $i\leq j \leq n$. Thus, computing the vanishing ideal can be time consuming, especially for larger cycles. However, if we already have the information about the linear binomials from the structure of the graph (either from symmetry or some other property), then we can reduce the size of the domain by removing certain variables, making the computation comparatively faster. The effect of this strategy is demonstrated in the example below.

\begin{example}
Let $\cg$ be the colored $6$-cycle as shown in Figure \ref{figure:colored 6 cycle}.
The vanishing ideal of this graph contains the following binomial linear forms:
\[
    \sigma_{22} - \sigma_{66}, \ \
    \sigma_{33} - \sigma_{55}, \ \
    \sigma_{12} - \sigma_{16}, \ \
    \sigma_{23} - \sigma_{56}, \ \
    \sigma_{34} - \sigma_{45}, \ \
    \sigma_{24} - \sigma_{46}, \ \
    \sigma_{13} - \sigma_{15}, \ \
    \sigma_{25} - \sigma_{36}.
\]
Computing the full ideal (without taking symmetries into account) takes approximately 12.0517 seconds.
However, observing that the graph has a reflection symmetry swapping vertices \(2\) with \(6\), and vertex \(3\) with \(5\), we can predict these binomial linear generators.
By removing the variables \(\sigma_{22}, \sigma_{33}, \sigma_{12}, \sigma_{23}, \sigma_{34}, \sigma_{24}, \sigma_{13}\) and \(\sigma_{25}\) from the computation, the size of the domain is reduced. As a result, computing the remaining ideal now only takes 6.32652 seconds, a reduction of nearly 47.5\% in computational time. Even when accounting for the time required to compute the graph symmetries, which is approximately 2.35788 seconds, the overall process remains faster than the full computation, with a net gain of around 27.9\% in efficiency.
\begin{figure}[H]
    \centering
    \begin{tikzpicture}[scale=0.65, every node/.style={circle, draw, inner sep=1pt, minimum size=4mm, font=\small}]
        \node[fill=red, label=above:{1}] (1) at (90:2) {};
        \node[fill=yellow, label=right:{2}] (2) at (30:2) {};
        \node[fill=yellow, label=below:{3}] (3) at (330:2) {};
        \node[fill=purple, label=below:{4}] (4) at (270:2) {};
        \node[fill=yellow, label=left:{5}] (5) at (210:2) {};
        \node[fill=yellow, label=above left:{6}] (6) at (150:2) {};
    
        \draw[blue, thick] (1) -- (2);
        \draw[green, thick] (2) -- (3);
        \draw[green, thick] (3) -- (4);
        \draw[green, thick] (4) -- (5);
        \draw[green, thick] (5) -- (6);
        \draw[blue, thick] (6) -- (1);
    \end{tikzpicture}
    \caption{Colored $6$-cycle}\label{figure:colored 6 cycle}
\end{figure}
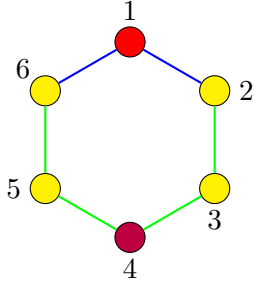 
\end{example}

In order to explore the necessary condition of the conjecture, it is important to first understand the image of each $\sigma_{ij}$ under the $\rho_\cg$ map. In \cite{JonesWest:2005}, the authors developed a combinatorial connection between the image of $\sigma_{ij}$ and the structure of the graph. 

\begin{theorem}
\label{theorem:covariance}\cite[Theorem 1]{JonesWest:2005}
Consider an $n$-dimensional multivariate normal distribution with a finite and non-singular covariance matrix $\Sigma$, and concentration matrix $K = \Sigma^{-1}$. The element of $\Sigma$ corresponding to the covariance between vertices $i$ and $j$ can be written as a sum of path weights over all paths in the graph between $i$ and $j$:
\begin{align*}
    \sigma_{ij} = \sum_{P \in \mathcal{P}_{ij}} (-1)^{m+1} k_{p_1 p_2} k_{p_2 p_3} \dots k_{p_{m-1} p_m} \frac{\det(K_{\setminus P})}{\det(K)},
\end{align*}
where $\mathcal{P}_{ij}$ represents the set of paths between $i$ and $j$, such that $p_1 = i$ and $p_m = j$ for all $P \in \mathcal{P}_{ij}$, and $K_{\setminus P}$ is the matrix with rows and columns corresponding to the variables in the path $P$ omitted, with the determinant of a zero-dimensional matrix taken to be 1.
\end{theorem}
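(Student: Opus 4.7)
The plan is to derive the identity from Cramer's rule combined with a bijective, graph-theoretic reading of the Leibniz formula for the relevant cofactor. First I would write
$\sigma_{ij}=(K^{-1})_{ij}=(-1)^{i+j}\det(\widehat{K}_{ji})/\det(K)$,
where $\widehat{K}_{ji}$ is the matrix obtained from $K$ by deleting row $j$ and column $i$. The entire task then reduces to a combinatorial expansion of $\det(\widehat{K}_{ji})$.

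Next I would apply the Leibniz formula to $\det(\widehat{K}_{ji})$ as a signed sum over bijections $\tau:[n]\setminus\{j\}\to[n]\setminus\{i\}$, and extend each such $\tau$ to a permutation $\widetilde{\tau}$ of $[n]$ by decreeing $\widetilde{\tau}(j)=i$. Under this extension, the cycle of $\widetilde{\tau}$ containing $j$ has the form $(j,p_1,p_2,\ldots,p_{m-1})$ with $p_1=i$ and $p_m=j$; because $\widetilde{\tau}$ is a permutation this cycle visits each vertex at most once, so it corresponds bijectively to a simple path $P=(p_1,\ldots,p_m)\in\mathcal{P}_{ij}$ together with a permutation $\pi$ of $[n]\setminus V(P)$ recorded by the remaining cycles of $\widetilde{\tau}$. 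Any term whose underlying $\tau$ involves a factor $k_{ab}$ with $a\neq b$ and $\{a,b\}\notin E$ vanishes, so only paths in $G$ contribute. The product in such a Leibniz term factors as $\bigl(\prod_{\ell=1}^{m-1}k_{p_\ell p_{\ell+1}}\bigr)\cdot\bigl(\prod_{r\notin V(P)}k_{r,\pi(r)}\bigr)$, and for fixed $P$, summing over the accompanying $\pi$ with the correct signs assembles precisely into $\det(K_{\setminus P})$.

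The main obstacle, as in most arguments of this flavor, is careful sign bookkeeping. One has to reconcile the sign of $\tau$ as a bijection between two \emph{distinct} sets (as it appears in the Leibniz formula for the minor) with the sign of $\widetilde{\tau}$ as a genuine permutation of $[n]$, and then combine this with both the cofactor sign $(-1)^{i+j}$ and the sign of the $m$-cycle $(j,p_1,\ldots,p_{m-1})$ inside $\widetilde{\tau}$. A clean way to do it is to note that moving row $j$ to the last row and column $i$ to the last column contributes a sign $(-1)^{(n-j)+(n-i)}=(-1)^{i+j}$ which cancels the cofactor sign, the $m$-cycle contributes $(-1)^{m-1}=(-1)^{m+1}$, and the residual $\mathrm{sgn}(\pi)$ is absorbed into $\det(K_{\setminus P})$. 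Summing over all $P\in\mathcal{P}_{ij}$ and dividing by $\det(K)$ then yields the claimed formula.
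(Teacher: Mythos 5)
The paper does not prove this statement at all: it is imported verbatim as \cite[Theorem 1]{JonesWest:2005}, so there is no in-paper argument to compare yours against. Judged on its own, your proof is correct and complete in outline. The key identity you need, namely that $(-1)^{i+j}\det(\widehat{K}_{ji})=\sum_{\widetilde{\tau}(j)=i}\operatorname{sgn}(\widetilde{\tau})\prod_{r\neq j}k_{r\widetilde{\tau}(r)}$, is exactly what your row/column-relocation argument establishes, and the decomposition of each such $\widetilde{\tau}$ into the $m$-cycle through $j$ (giving the simple path $P$, the factor $(-1)^{m-1}=(-1)^{m+1}$, and the $m-1$ off-diagonal factors $k_{p_1p_2}\cdots k_{p_{m-1}p_m}$, the factor $k_{j\widetilde{\tau}(j)}$ being absent precisely because row $j$ was deleted) times a permutation $\pi$ of $[n]\setminus V(P)$ reassembles the inner sum into $\det(K_{\setminus P})$. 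This is essentially the standard combinatorial derivation, somewhat cleaner than the recursive cofactor expansion in Jones and West. Two small points worth making explicit if you write it up: the paper also applies the formula with $i=j$ (Case III in the proof of its Theorem 3.3 on the $5$-cycle), where your extension gives $\widetilde{\tau}(i)=i$, the ``path'' is the single vertex $i$ with $m=1$ and an empty edge product, and the formula degenerates to the usual diagonal cofactor identity $\sigma_{ii}=\det(K_{\setminus\{i\}})/\det(K)$; and the vanishing of terms with non-edges inside the \emph{other} cycles of $\widetilde{\tau}$ need not be discussed separately, since those zero terms are already present in the Leibniz expansion of $\det(K_{\setminus P})$.
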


Implementing the above result on cycles provides us a simplified way to analyze the image of $\sigma_{ij}$. This is because for any two distinct vertices $i$ and $j$ in a cycle, there exists exactly two paths between them. Thus, the next section is focused on analyzing the paths and the determinant of their corresponding concentration matrices.

\section{Analysis of path graphs}\label{section:analysis of path graphs}
As mentioned in the end of Section \ref{Section:Preliminaries}, implementing Theorem \ref{theorem:covariance} on cycles allows us to view the image of each $\sigma_{ij}$ as the sum over the two paths between $i$ and $j$ in the cycle. We state this result as an immediate corollary to Theorem \ref{theorem:covariance}.

\begin{corollary}\label{Cor:J&WforCycle}
Let $G$ be an $n$-cycle with concentration matrix $K$ and let $i$ and $j$ be vertices in $G$. Then, there exist precisely two distinct paths connecting \( i \) and \( j \) in $G$. Let the shorter path be denoted by \( i \leftrightarrow j \), and the complementary path by \( i \overset{c}{\leftrightarrow} j \). Then covariance between the vertices $i$ and $j$ is given by
\[
        \sigma_{ij} = \frac{1}{\operatorname{det}(K)} 
        \big( (-1)^{n_{i \leftrightarrow j}\;+\;1} \prod_{\{i^{\prime}, j^{\prime}\} \in i \leftrightarrow j} k_{i^{\prime} j^{\prime}} \operatorname{det}\big(K_{\setminus i \leftrightarrow j}\big)
        + (-1)^{n_{i \overset{c}{\leftrightarrow} j}\;+\;1} \prod_{\{i^{\prime}, j^{\prime}\} \in i \overset{c}{\leftrightarrow} j} k_{i^{\prime} j^{\prime}} \operatorname{det}\big(K_{\setminus i \overset{c}{\leftrightarrow} j}\big)\big),
\]
where $n_{i \leftrightarrow j}$ denotes the number of vertices on the path $i \leftrightarrow j$, and $K_{\setminus i \leftrightarrow j}$ is the submatrix of the concentration matrix $K$ obtained after removing the rows and columns corresponding to the vertices in $i \leftrightarrow j$.
\end{corollary}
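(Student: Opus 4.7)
The plan is to derive this corollary by specializing Theorem~\ref{theorem:covariance} to the cycle and controlling the path structure. First I would invoke Theorem~\ref{theorem:covariance}, which expresses $\sigma_{ij}$ as a sum over all paths $P \in \mathcal{P}_{ij}$ in $G$ of terms of the form $(-1)^{m+1}$ times the product of edge concentrations along $P$ times $\det(K_{\setminus P})/\det(K)$, where $m$ is the number of vertices in $P$. The content of the corollary is then to show that for an $n$-cycle the index set $\mathcal{P}_{ij}$ has exactly two elements, which collapses the sum to the displayed two-term expression.

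Next I would prove the combinatorial claim that any two distinct vertices $i$ and $j$ of an $n$-cycle are joined by exactly two paths. This follows from the standard fact that a cycle has a unique Hamiltonian cycle up to orientation: starting at $i$, one can traverse the cycle in the clockwise or counterclockwise direction, and each direction determines a unique self-avoiding walk to $j$. Any other walk from $i$ to $j$ must either repeat a vertex (and hence not be a path) or deviate from the cycle (impossible, since vertices in a cycle have degree two and the two incident edges are already fixed by the direction of traversal). Calling the shorter of these two paths $i\leftrightarrow j$ and the other $i\overset{c}{\leftrightarrow} j$, these exhaust $\mathcal{P}_{ij}$.

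Finally, I would substitute these two paths into the formula of Theorem~\ref{theorem:covariance}. With the notation $n_{i\leftrightarrow j}$ for the number of vertices on the shorter path and $n_{i\overset{c}{\leftrightarrow} j}$ for the number of vertices on the complementary path, the exponent $m+1$ in Theorem~\ref{theorem:covariance} becomes $n_{i\leftrightarrow j}+1$ and $n_{i\overset{c}{\leftrightarrow} j}+1$ respectively, the edge-weight products become $\prod_{\{i',j'\}\in i\leftrightarrow j} k_{i'j'}$ and $\prod_{\{i',j'\}\in i\overset{c}{\leftrightarrow} j} k_{i'j'}$, and the cofactor matrices are $K_{\setminus i\leftrightarrow j}$ and $K_{\setminus i\overset{c}{\leftrightarrow} j}$. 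Factoring out the common $1/\det(K)$ yields the stated identity.

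No step here is really a main obstacle: the only nonroutine observation is the uniqueness-of-two-paths fact, which is immediate from the degree-two structure of a cycle. The proof is thus essentially a direct specialization of the general Jones--West formula together with this combinatorial remark.
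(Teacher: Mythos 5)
Your proposal is correct and matches the paper's treatment: the paper presents this result as an immediate specialization of Theorem~\ref{theorem:covariance}, relying on exactly the observation you make explicit, namely that the degree-two structure of a cycle forces precisely two paths between any two distinct vertices, so the Jones--West sum collapses to the two displayed terms.
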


\begin{example}\label{example:sum of two paths}
Let \(\cg\) be the colored $4$-cycle as shown in Example~\ref{ex:4cycleColoredrhomap}.
Applying Corollary~\ref{Cor:J&WforCycle}, we get the image of $\sigma_{14}$ and $\sigma_{23}$ as follows:
\begin{align*}
&\sigma_{14} = \frac{1}{\operatorname{det}(K)} \big( (-1)^{2+1} k_{14} \operatorname{det}\big(K_{\setminus 1 \leftrightarrow 4}\big)
+  (-1)^{4+1} k_{12} k_{23} k_{34} \operatorname{det}\big(K_{\setminus 1 \overset{c}{\leftrightarrow} 4}\big) \big),\\
&\sigma_{23} = \frac{1}{\operatorname{det}(K)}
\big( (-1)^{2+1} k_{23} \operatorname{det}\big(K_{\setminus 2 \leftrightarrow 3}\big)
+  (-1)^{4+1} k_{34} k_{14} k_{12} \operatorname{det}\big(K_{\setminus 2 \overset{c}{\leftrightarrow} 3}\big) \big).
\end{align*}
The determinants are simplified as
\begin{align*}
&\operatorname{det}\big(K_{\setminus 1 \leftrightarrow 4}\big) = \operatorname{det}
\begin{pmatrix}
    k_{22} &k_{23}\\
    k_{23} & k_{33}
\end{pmatrix}, \quad
\operatorname{det}\big(K_{\setminus 1 \overset{c}{\leftrightarrow} 4} \big)= 1,\\
&\operatorname{det}\big(K_{\setminus 2 \leftrightarrow 3}\big) = \operatorname{det}
\begin{pmatrix}
    k_{11} &k_{14}\\
    k_{14} & k_{44}
\end{pmatrix}, \quad
\operatorname{det}\big(K_{\setminus 2 \overset{c}{\leftrightarrow} 3}\big) = 1.
\end{align*}
We adopt the convention from Theorem~\ref{theorem:covariance} that the determinant of a zero-dimensional matrix is $1$. Substituting the color constraints \(k_{12} = k_{34}\), \( k_{23} = k_{14} \), \(k_{11} = k_{22} \), and \(k_{33} = k_{44} \) gives us that 
\[
k_{14}\det(K_{\setminus 1\leftrightarrow 4}) = k_{23}\det(K_{\setminus 2\leftrightarrow 3})  \text{ and } k_{12}k_{23}k_{34}\det(K_{\setminus 1\overset{c}\leftrightarrow 4}) = k_{34}k_{14}k_{12}\det(K_{\setminus 2\overset{c}\leftrightarrow 3}),
\]
implying that $\sigma_{14}-\sigma_{23} \in I_\cg$.
\end{example}

Using the path expression of $\sigma_{ij}$ in Corollary \ref{Cor:J&WforCycle}, one can obtain a sufficient condition for the existence of a linear binomial in $I_\cg$. In particular, $\sigma_{ij}-\sigma_{xy} \in I_\cg$ for a given colored cycle $\cg$ if the following conditions hold:
\begin{enumerate}
\item $\prod_{\{i',j'\}\in i\leftrightarrow j} k_{i'j'}=\prod_{\{i',j'\}\in x\leftrightarrow y} k_{i'j'}$,
\item $\prod_{\{i',j'\}\in i \overset{c}\leftrightarrow j} k_{i'j'}=\prod_{\{i',j'\}\in x \overset{c}\leftrightarrow y} k_{i'j'}$,
\item $\det(K_{\setminus i \leftrightarrow j})=\det(K_{\setminus x \leftrightarrow y})$, and
\item $\det(K_{\setminus i \overset{c}\leftrightarrow j})=\det(K_{\setminus x \overset{c}\leftrightarrow y})$.
\end{enumerate}
In other words, $\sigma_{ij}-\sigma_{xy}$ lies in the vanishing ideal of $\cg$ if the length and the edge color multiplicities of the paths $i \leftrightarrow j$ and $x \leftrightarrow y$ and similarly $i \overset{c}\leftrightarrow j$ and $x \overset{c}\leftrightarrow y$ are equal, along with  $\det(K_{\setminus i \leftrightarrow j})=\det(K_{\setminus x \leftrightarrow y})$  and $\det(K_{\setminus i \overset{c}\leftrightarrow j})=\det(K_{\setminus x \overset{c}\leftrightarrow y})$. This condition can also be seen in Example \ref{example:sum of two paths}. Although this is not a necessary condition as there can be cancellations among the terms of the two paths of $\sigma_{ij}$ (which we discuss in a later section), we focus on when this sufficient condition holds in this section. In particular, our goal now is to study the conditions when $\det(K_{\setminus i \leftrightarrow j})$ and $\det(K_{\setminus x \leftrightarrow y})$ (and similarly the complementary paths) are equal.

\subsection{Conditions on two colored path graphs having the same determinant} 
In order to analyze the determinant of the concentration matrix of a colored path, we first obtain a way to express the determinant in terms of the edges and vertices of the path. We explore the fact that the concentration matrix of a path is a tridiagonal matrix to obtain the following result:

\begin{lemma}\label{lem:tridiagonal det}
Let $P$ be a path graph on $m$ vertices and $K_{P} \in \mathbb{R}^{m \times m}$ be the corresponding concentration matrix. Then $K_{P}$ is a tridiagonal matrix and the determinant is given by: 
    \begin{align*}
    \operatorname{det}(K_{P}) = 
    \sum_{\substack{|S| = 0, \\ S \subseteq E_P \text{ disjoint}}}^{\left\lfloor \frac{m}{2} \right\rfloor} (-1)^{|S|} \prod_{\{i,j\} \in S} k_{ij}^2 \prod_{v \in V_P \setminus V(S)} k_{vv},
    \end{align*}
where $E_P$ denotes the edge set of $P$, $V(S)$ the set of vertices incident to the edges in $S$, and $|S|$ the number of edges in the $S$.
\end{lemma}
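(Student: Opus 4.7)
The plan is to induct on the number $m$ of vertices of $P$, leveraging the fact that $K_P$ is tridiagonal so that Laplace expansion along the last row produces a clean two-term recurrence. Writing $P_j$ for the sub-path on the first $j$ vertices and setting $D_j \defas \det(K_{P_j})$ with the convention $D_0 = 1$, expansion along the bottom row of $K_{P_m}$ yields
\[
D_m \;=\; k_{mm}\, D_{m-1} \;-\; k_{m-1,m}^2\, D_{m-2}.
\]
The base cases $m = 1$ and $m = 2$ match the claimed formula directly: it specializes to $k_{11}$ and to $k_{11}k_{22} - k_{12}^2$, respectively, which agree with the determinants on the left-hand side.

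For the inductive step, I would partition the matchings $S \subseteq E_{P_m}$ into two classes according to whether $\{m-1,m\} \in S$. Matchings with $\{m-1,m\} \notin S$ are precisely the matchings of $E_{P_{m-1}}$, regarded as sitting inside $E_{P_m}$ with vertex $m$ left unmatched; these contribute an overall factor of $k_{mm}$ (from the unmatched vertex $m$) times the sum defining $D_{m-1}$, which by the inductive hypothesis equals $k_{mm} D_{m-1}$. Matchings with $\{m-1,m\} \in S$ are in bijection with matchings of $E_{P_{m-2}}$ via $S \mapsto S \setminus \{\{m-1,m\}\}$, since the edge $\{m-1,m\}$ cannot share a vertex with any other edge of $S$, so both $m-1$ and $m$ are automatically removed from $V(S)$. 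Re-adjoining this edge multiplies the associated monomial by $k_{m-1,m}^2$ and flips its sign, so these matchings contribute $-k_{m-1,m}^2\, D_{m-2}$. Summing the two contributions reproduces the recurrence above and closes the induction.

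I do not expect any serious obstacle here, since the tridiagonal recurrence is classical and the combinatorial identification is essentially a restatement of that recurrence. The only care needed is tracking signs correctly and verifying that the two classes of matchings are exhaustive and disjoint, together with the boundary check that the convention $D_0 = 1$ is consistent with the empty product on the right-hand side of the claimed formula, which is what makes the $m = 2$ case of the recurrence go through.
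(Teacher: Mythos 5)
Your proposal is correct and follows essentially the same route as the paper: induction on $m$ via the tridiagonal recurrence $\det(K_m)=k_{mm}\det(K_{m-1})-k_{m-1,m}^2\det(K_{m-2})$, with the matchings $S\subseteq E_m$ split according to whether $\{m-1,m\}\in S$. The sign and boundary checks you flag are exactly the points the paper's proof also handles, so there is nothing to add.
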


\begin{proof}
The concentration matrix $K_P$ is a tridiagonal matrix as every vertex \( i \in \{2, \dots, m-1\} \) in $P$ is connected to exactly two neighbors \( i-1 \) and \( i+1 \), whereas the vertices \( 1 \) and \( m \) are only connected to \( 2 \) and \( m-1 \), respectively.
Let \( K_m\) denote the concentration matrix of the path of length $m$, $E_{m}$ denote the $m-1$ edges, and $V_m$ denote the corresponding set of $m$ vertices. We apply induction on the length of the path to obtain the formula. 
    
The recurrence relation for the determinant of a tridiagonal matrix is given by:
\[
\det (K_m)=k_{mm} \det(K_{m-1}) -k_{m-1m}^2 \det(K_{m-2}).
\]
Since $m = 1$ is the trivial case, we start with $m = 2$ as the base case, which is the path with two vertices. Using the above recursion, the determinant of $K_2$ can be written as
\[
\det(K_2)=k_{11} k_{22} - k_{12}^2 = (-1)^{|\emptyset|} \prod_{v\in [2]} k_{vv} + (-1)^{|\{1,2\}|} \prod_{\{i,j\}\in \{1,2\}}k_{ij}^2.
\]
Assuming that the formula holds for all subpaths of $P$ of length smaller or equal to \( m-1 \), we expand the determinant of $K_m$. Applying the recurrence relation and our induction hypothesis, we get
\begin{eqnarray*}
 \det(K_m) &=& k_{mm} \det(K_{m-1}) - k_{m-1m}^2 \det(K_{m-2})   \\
 &=& k_{mm} \sum_{\substack{|S| = 0, \\ S \subseteq E_{m-1} \text{ disjoint}}}^{\left\lfloor \frac{m-1}{2} \right\rfloor} (-1)^{|S|} \prod_{\{i,j\} \in S} k_{ij}^2 \prod_{v \in V_{m-1} \setminus V(S)} k_{vv} 
        \ - \\
        &&\ k_{m-1m}^2 \sum_{\substack{|S| = 0, \\ S \subseteq E_{m-2} \text{ disjoint}}}^{\left\lfloor \frac{m-2}{2} \right\rfloor} (-1)^{|S|} \prod_{\{i,j\} \in S} k_{ij}^2 \prod_{v \in V_{m-2} \setminus V(S)} k_{vv}.
\end{eqnarray*}
For $S\subseteq E_{m-1}$, the vertex $m$ is never included in $v \in V_{m-1} \setminus V(S)$. Hence, multiplying the product $k_{vv}$ over these vertices by $k_{mm}$ is equivalent to taking the product of $k_{vv}$ over $v \in V_{m} \setminus V(S)$.
Moreover, multiplying the second sum by \( k_{m-1m}^2 \) corresponds to summing over all edge sets \( S \subseteq E_{m-2} \) that additionally include the edge \( \{m-1,m\} \). 
Since \( S \) then contains one more edge than in the previous sum, the upper bound of the sum increases by 1 and the sign switches. This new $S$, now denoted by $S'$, ensures that the vertices \( m-1 \) and \( m \) are always included in \( V(S') \). Consequently, we can rewrite the equation as:
\begin{eqnarray*}        
 \det(K_m)&=& \sum_{\substack{|S| = 0, \\ S \subseteq E_{m-1} \text{ disjoint}}}^{\left\lfloor \frac{m-1}{2} \right\rfloor} (-1)^{|S|} \prod_{\{i,j\} \in S} k_{ij}^2 \prod_{v \in V_{m} \setminus V(S)} k_{vv} + \\
 &&\sum_{\substack{|S'| = 1, \\ S'=S\cup \{m-1,m\} \\ S \subseteq E_{m-2} \text{ disjoint} }}^{\left\lfloor \frac{m}{2} \right\rfloor} (-1)^{|S|} \prod_{\{i,j\} \in S'} k_{ij}^2 \prod_{v \in V_{m} \setminus V(S')} k_{vv}.
\end{eqnarray*}

In the first sum, since $E_{m-1} = E_m \setminus \{m-1,m\}$, we can rewrite $S \subseteq E_{m-1}$ disjoint as $S \subseteq E_{m}$ disjoint with the additional condition that $\{m-1,m\} \notin S$. Furthermore, the upper bound of that sum can be increased by 1 since, for $|S| = \left\lfloor \frac{m}{2} \right\rfloor$, the sum is empty as the edges in $S$ need to be disjoint, and $\{m-1,m\} \notin S$. In the second sum, we have $\{m-1,m\} \in S'$, and we also avoid the edge $\{m-2, m-1\}$ in all summand as $S\in E_{m-2}$. Thus, we can rewrite $S \subseteq E_{m-2}$ disjoint with $S'=\{m-1,m\}\cup S$ as $S \subseteq E_{m}$ disjoint with $\{m-1,m\} \in S$. The above sum can then be rewritten as:
\begin{eqnarray*}
\det(K_m)&=& \sum_{\substack{|S| = 0, \\ S \subseteq E_{m} \text{ disjoint}, \\ \{m-1,m\} \notin S}}^{\left\lfloor \frac{m}{2} \right\rfloor} (-1)^{|S|} \prod_{\{i,j\} \in S} k_{ij}^2 \prod_{v \in V_{m} \setminus V(S)} k_{vv} + \\
&&\sum_{\substack{|S| = 1, \\  S \subseteq E_{m} \text{ disjoint}, \\ \{m-1,m\} \in S}}^{\left\lfloor \frac{m}{2} \right\rfloor} (-1)^{|S|} \prod_{\{i,j\} \in S} k_{ij}^2 \prod_{v \in V_{m} \setminus V(S)} k_{vv}, \text{ which is equal to}\\ 
&&=\sum_{\substack{|S| = 0, \\ S \subseteq E_m \text{ disjoint}}}^{\left\lfloor \frac{m}{2} \right\rfloor} (-1)^{|S|} \prod_{\{i,j\} \in S} k_{ij}^2 \prod_{v \in V_m \setminus V(S)} k_{vv}.
\end{eqnarray*}
\end{proof}

Now, for any given colored path $P$, the obvious two candidates for a colored path $Q$ to satisfy $\det(K_P)=\det(K_Q)$ are when $Q$ is identical to $P$ or $Q$ is the reflection of $P$. This follows from the fact that if $Q$ is identical or the reflection of $P$, then $K_P$ and $K_Q$ are similar matrices with $K_Q=P K_P P^{-1}$, where $P$ is the identity matrix (when $Q$ is identical) or the anti-diagonal matrix (when $Q$ is the reflection). In the next lemma, we show that these are indeed the only two candidates when $P$ is a colored path of length one or two. Using this result, we also show that the conjecture is true for $3,5,$ and $7$ cycles.

\begin{lemma}\label{Lemma:Pathlength 1 and 2}
Let $P$ and $Q$ be two colored paths, both of length one or two. If $\det(K_P) = \det(K_Q)$, then $P$ and $Q$ are either identical or reflections of each other.
\end{lemma}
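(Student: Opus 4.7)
The plan is to apply the determinant formula of Lemma~\ref{lem:tridiagonal det} and then match monomials in the polynomial ring generated by the (distinct) color variables. Since the paper assumes the vertex and edge color sets are disjoint, the corresponding indeterminates are also disjoint, and we may split any polynomial identity by its total degree in the edge color variables and treat each homogeneous component separately.

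For paths of length one, let $P$ have vertex color variables $x_1,x_2$ and edge color variable $y$, and let $Q$ have corresponding variables $x_1',x_2',y'$. Lemma~\ref{lem:tridiagonal det} gives $\det(K_P) = x_1x_2 - y^2$ and $\det(K_Q) = x_1'x_2' - (y')^2$. Splitting $\det(K_P) = \det(K_Q)$ by edge-variable degree forces $y = y'$ and $x_1x_2 = x_1'x_2'$. Since a degree-two monomial in indeterminates factors uniquely (up to order) into a product of two (not necessarily distinct) variables, we conclude $\{x_1,x_2\} = \{x_1',x_2'\}$ as multisets, which gives either identity or reflection.

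For paths of length two, Lemma~\ref{lem:tridiagonal det} yields
\[
\det(K_P) = x_1 x_2 x_3 - x_3 y_{12}^2 - x_1 y_{23}^2,
\]
and similarly for $Q$ in primed variables. Splitting by edge-variable degree produces the \emph{vertex identity} $x_1 x_2 x_3 = x_1' x_2' x_3'$, which by unique factorization of monomials gives $\{x_1,x_2,x_3\} = \{x_1',x_2',x_3'\}$ as multisets, and the \emph{edge-quadratic identity}
\[
x_3 y_{12}^2 + x_1 y_{23}^2 = x_3' (y_{12}')^2 + x_1' (y_{23}')^2.
\]
The remaining work is to show that the edge-quadratic identity forces either $P = Q$ or $Q$ to be the reflection of $P$. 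If $y_{12} = y_{23}$, matching supports in the edge variables forces $y_{12}' = y_{23}' = y_{12}$, and the identity collapses to $x_1 + x_3 = x_1' + x_3'$; by unique decomposition of a sum of two indeterminates this yields $\{x_1,x_3\} = \{x_1',x_3'\}$ as multisets, which together with the vertex identity pins down $x_2 = x_2'$. If instead $y_{12} \neq y_{23}$, the same support argument forces $y_{12}' \neq y_{23}'$, and a finite case analysis over which pairs among $\{y_{12}, y_{23}, y_{12}', y_{23}'\}$ coincide — using that no nonzero vertex-color coefficient can vanish — rules out every configuration except $\{y_{12} = y_{12}',\, y_{23} = y_{23}'\}$ (identity) and $\{y_{12} = y_{23}',\, y_{23} = y_{12}'\}$ (reflection). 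Either matching, combined with the vertex identity, yields $x_2 = x_2'$ and determines the vertex colors of $Q$ accordingly.

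The main obstacle is the edge-color case analysis in the length-two case: one must handle cleanly the possibility that $P$ and $Q$ share colors, and argue on each distinct squared edge variable that the residual coefficient, being a difference of vertex-color indeterminates, must vanish, which is possible only in the two pairings listed above. The disjointness of vertex and edge colors is exactly what licenses this separation.
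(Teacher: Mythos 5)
Your proposal is correct and follows essentially the same route as the paper's own proof: compute the two determinants explicitly, use the disjointness of vertex and edge color variables to split the identity by edge-variable degree, and match monomials to force the identity or reflection pairing. If anything, your treatment is slightly more careful than the paper's, since you explicitly handle the degenerate case $y_{12}=y_{23}$ where the two edge-quadratic terms of a path merge into a single monomial, a case the paper's two-case split passes over silently.
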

\begin{proof}
We analyze the two possible path lengths separately:
    
\textbf{1)} Let $P$ and $Q$ be paths of length one with $\operatorname{det}(K_P) = \operatorname{det}(K_Q)$. Furthermore, let the vertices of $P$ and $Q$ be $v, w$ and $x,y$, respectively.
        The associated concentration matrices are given by:
    \[
        K_P =
        \begin{pmatrix}
            k_{vv} & k_{vw} \\
            k_{vw} & k_{ww}
        \end{pmatrix},
        \hspace{1em}
        K_Q =
        \begin{pmatrix}
            k_{xx} & k_{xy} \\
            k_{xy} & k_{yy}
        \end{pmatrix}.
    \]
By equating the determinants, we have: 
    \[
     k_{vv} k_{ww} - k_{vw}^2 = k_{xx} k_{yy} - k_{xy}^2.
    \]
As the vertex and edges colors are disjoint, two monomials can be equal only if they have the same vertex and edge degree. Thus, we have $k_{vw} = k_{xy}$. This gives rise to two cases:
    \begin{enumerate}
        \item If $k_{vv} = k_{xx}$ and $k_{ww} = k_{yy}$, then $P$ and $Q$ are identical.
        \item  If $k_{vv} = k_{yy}$ and $k_{ww} = k_{xx}$, the $P$ and $Q$ are reflections of each other.
    \end{enumerate}
    
\textbf{2)} Now, let \(P\) and \(Q\) be paths of length two, with vertices of \(P\) and $Q$ as \(u, v, w\), and \(x, y, z\), respectively. The associated concentration matrices are given by:
    \[
        K_P =
        \begin{pmatrix}
            k_{uu} & k_{uv} & 0 \\
            k_{uv} & k_{vv} & k_{vw}\\
            0 & k_{vw} & k_{ww}
        \end{pmatrix},
        \hspace{1em}
        K_Q =
        \begin{pmatrix}
            k_{xx} & k_{xy} & 0 \\
            k_{xy} & k_{yy} & k_{yz}\\
            0 & k_{yz} & k_{zz}
        \end{pmatrix}.
    \]
Setting the two determinants equal gives us:
    \[
         k_{uu} k_{vv} k_{ww} - k_{uv}^2 k_{ww} - k_{vw}^2 k_{uu} 
         = k_{xx} k_{yy} k_{zz} - k_{xy}^2 k_{zz} - k_{yz}^2 k_{xx}.
    \]
Since the vertex and edge degrees need to match, we know that $k_{uu} k_{vv} k_{ww} = k_{xx} k_{yy} k_{zz}$.  This results in two possible cases:
    \begin{enumerate}
        \item If $k_{uv}^2 k_{ww} = k_{xy}^2 k_{zz}$ and $k_{vw}^2 k_{uu} = k_{yz}^2 k_{xx}$, it follows that $k_{uv} = k_{xy}$ and $k_{ww} = k_{zz}$. Additionally, $k_{vw} = k_{yz}$ and $k_{uu} = k_{xx}$. Since $k_{uu}k_{vv}k_{ww}=k_{xx}k_{yy}k_{zz}$ we have $k_{vv} = k_{yy}$, and therefore $P$ and $Q$ are identical.
        \item If $k_{uv}^2 k_{ww} = k_{yz}^2 k_{xx}$ and $k_{vw}^2 k_{uu} = k_{xy}^2 k_{zz}$, we get that $k_{uv}= k_{yz}$, $ k_{ww} = k_{xx}$ and $k_{vw} = k_{xy}$, $ k_{uu} = k_{zz}$, which implies $k_{vv} = k_{yy}$.
        In this case, the paths $P$ and $Q$ are reflections of each other.
    \qedhere
    \end{enumerate}
\end{proof}

\begin{theorem}\label{theorem:3,5,7 cycle}
Let $\cg$ be a colored $3,5$ or $7$ cycle. Then a linear binomial lies in $I_\cg$ if and only if there is a corresponding symmetry in $\cg$.
\end{theorem}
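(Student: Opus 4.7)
The ``if'' direction is immediate from Proposition \ref{prop:symmetries}. For the converse, suppose $\sigma_{ij} - \sigma_{xy} \in I_\cg$. Since both rational functions share denominator $\det(K)$, this is equivalent to equality of the numerators, which by Corollary \ref{Cor:J&WforCycle} each split as a sum of two ``path'' contributions
\[
N(i,j) = T_P + T_{P^c}, \qquad T_P = (-1)^{|P|+1}\prod_{e \in P} k_e \det(K_{\setminus P}),
\]
where $P$ and $P^c$ are the two cycle paths between $i$ and $j$; write $N(x,y) = T_Q + T_{Q^c}$ analogously, with $|P|, |Q| \le (n+1)/2$ throughout. The plan is to show that $N(i,j) = N(x,y)$ forces, summand by summand, an identification of $P$ with $Q$ and of $P^c$ with $Q^c$ as colored paths (up to reflection), from which a dihedral graph symmetry of $\cg$ sending $\{i,j\}$ to $\{x,y\}$ can be assembled.

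First I would expand each determinant by Lemma \ref{lem:tridiagonal det} and observe that the unique monomial of maximum vertex-variable degree in $N(i,j)$ is the $S = \emptyset$ summand of $T_P$, with vertex-degree $n - |P|$; every other monomial (including those of $T_{P^c}$, whose maximum vertex-degree is $|P|-2$) is strictly smaller, since for $n$ odd and $|P| \le (n+1)/2$ one has $|P|-2 < n - |P|$. Matching these unique maxima on both sides of $N(i,j) = N(x,y)$ forces $|P| = |Q|$, equates the multiset of edge colors of $P$ with that of $Q$, and equates the multiset of vertex colors of $V(P^c)\setminus\{i,j\}$ with that of $V(Q^c)\setminus\{x,y\}$. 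Running the same argument on the maximum monomial of $T_{P^c}$ supplies the analogous multiset identifications for $P^c$ versus $Q^c$.

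Next I separate $T_P$ from $T_{P^c}$ as polynomials in the color variables: since $n$ is odd, $|P|-1$ and $|P^c|-1$ have opposite parities, so the subset of edge colors appearing with odd total exponent differs between monomials of $T_P$ and monomials of $T_{P^c}$. This makes the decomposition $N(i,j) = T_P + T_{P^c}$ canonical; together with $|P|=|Q|$ it rules out the ``crossed'' matching $T_P = T_{Q^c}$ (which would force $|P| \equiv |Q^c| \pmod 2$, contradicting $|P| = |Q|$ and the opposite parities of $|Q|, |Q^c|$) and forces $T_P = T_Q$ and $T_{P^c} = T_{Q^c}$. Combined with the edge-multiset equalities from the previous step, $\prod_{e \in P} k_e = \prod_{e \in Q} k_e$ as color monomials, so cancelling in the polynomial ring reduces to the two determinant identities $\det(K_{\setminus P}) = \det(K_{\setminus Q})$ and $\det(K_{\setminus P^c}) = \det(K_{\setminus Q^c})$ on the induced complementary sub-paths.

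The concluding step applies Lemma \ref{Lemma:Pathlength 1 and 2} to these sub-paths whenever their length is at most $2$. For $n = 3$ the sub-paths have at most $1$ vertex (trivial); for $n = 5$ they have $2$ or $3$ vertices; and for $n = 7$ in the distance-$3$ case they have $2$ and $3$ vertices. In each of these cases Lemma \ref{Lemma:Pathlength 1 and 2} identifies the sub-paths as identical or reflected colored paths, which combined with the matched short-path colorings yields the required element of $D_n$. The main obstacle is the remaining $n = 7$ subcases of distance $1$ and $2$, in which one sub-path has $4$ or $5$ vertices and Lemma \ref{Lemma:Pathlength 1 and 2} does not apply. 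For these I propose a direct monomial-by-monomial comparison of the tridiagonal expansion: the vertex- and edge-multiset constraints from the second paragraph cut the space of admissible color pairings to a small finite number of cases, each of which I expect to force exactly the conditions under which the unique non-trivial reflection of the $7$-cycle sending $\{i,j\}$ to $\{x,y\}$ preserves the coloring. This last step is precisely what breaks down for $n \ge 8$ and is what the authors later exploit to construct the counterexamples.
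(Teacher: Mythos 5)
Your reduction follows essentially the same route as the paper: the maximum-vertex-degree monomial argument and the parity separation of $T_P$ from $T_{P^c}$ are exactly what the paper formalizes later as Lemma \ref{lem:VertexCoeffOddEvenPath} and Proposition \ref{proposition:NoCancelInOddCycle}, and they do correctly yield $|P|=|Q|$, the matching of the edge products, and the two determinant identities $\det(K_{\setminus P})=\det(K_{\setminus Q})$ and $\det(K_{\setminus P^c})=\det(K_{\setminus Q^c})$ (one small caveat: your ``set of edge colors appearing with odd exponent'' criterion can fail when two edges of $P$ share a color; the robust invariant is the parity of the total vertex degree, which is all you need). The cases you close with Lemma \ref{Lemma:Pathlength 1 and 2} are handled the same way in the paper. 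But there are two genuine gaps. First, you never treat the diagonal binomials $\sigma_{ii}-\sigma_{xx}$. For these the two-path decomposition degenerates: there is a single trivial path from $i$ to $i$, and the condition becomes $\det(K_{\setminus i})=\det(K_{\setminus x})$ for paths on $n-1$ vertices, which for $n=5$ and $n=7$ is outside the reach of Lemma \ref{Lemma:Pathlength 1 and 2}. The paper devotes a separate case to this (Case~III of the $5$-cycle argument), again by explicit monomial comparison.

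Second, and more seriously, the $n=7$ distance-one and distance-two cases (residual paths on $5$ and $4$ vertices), which you leave at ``I expect,'' are precisely where the content of the theorem lies, and the expectation cannot be justified on general grounds: Theorems \ref{theorem:pathDetTypeEven} and \ref{theorem:PathDetTypeOdd} show that two colored paths on $4$ or $5$ vertices can have equal determinants while being neither identical nor reflections of one another, so the bare identity $\det(K_{\setminus P})=\det(K_{\setminus Q})$ does not by itself force a reflection. What rescues the $7$-cycle is that the two residual paths overlap inside the same colored cycle (for $\sigma_{12}-\sigma_{67}$ they are the paths on vertices $3,4,5,6,7$ and $1,2,3,4,5$, sharing $3,4,5$), and the explicit degree-by-degree comparison of their determinants, which the paper carries out to derive $k_{11}=k_{77}$, $k_{22}=k_{66}$, $k_{33}=k_{55}$, $k_{23}=k_{56}$ and $k_{34}=k_{45}$, is what forces the reflection. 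Until you perform that computation, and the analogous ones for distance two and for the diagonal binomials, the proof is incomplete exactly at the step that separates $n=7$ from the counterexamples at $n\ge 8$.
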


\begin{proof}
We first prove the theorem for $5$ cycles. In a 5-cycle, the shorter path between any two distinct vertices is either of length one (with the complementary path of length four), or of length two (with the complementary path of length three). This configuration gives us three possible types of linear binomials that can arise in $I_\cg$:
    
\textbf{Case I: Shorter path between $i$ and $j$ is of length one:} Without loss of generality we assume $\sigma_{12} - \sigma_{45} \in I_\cg$. Then, by Corollary~\ref{Cor:J&WforCycle}, we have:
\begin{eqnarray*}
\sigma_{12}&=& k_{12}\det(K_{3\leftrightarrow 5}) - k_{23}k_{34}k_{45}k_{15}\det(K_{\emptyset}), \text{ and} \\
\sigma_{45}&=& k_{45}\det(K_{1\leftrightarrow 3}) - k_{34}k_{23}k_{12}k_{15}\det(K_{\emptyset}).
\end{eqnarray*}
Notice that in both $\sigma_{12}$ and $\sigma_{45}$, the second sum is a monomial with vertex degree $0$ and edge degree $4$. As there is no term in the first sum which can have a similar degree, we can conclude that
\begin{eqnarray*}
k_{12}\det(K_{3\leftrightarrow 5}) &=& k_{45}\det(K_{1\leftrightarrow 3}), \text{ and} \\ k_{23}k_{34}k_{45}k_{15}\det(K_{\emptyset})&=& k_{34}k_{23}k_{12}k_{15}\det(K_{\emptyset}).
\end{eqnarray*}
Therefore, $k_{12} = k_{45}$. Since the paths on $V(3 \leftrightarrow 5)$ and $V(1 \leftrightarrow 3)$ are both of length two, by Lemma~\ref{Lemma:Pathlength 1 and 2}, they are either identical or reflections of each other.
    \begin{enumerate}
        \item If the paths on $V(3 \leftrightarrow 5)$ and $V(1 \leftrightarrow 3)$ are equal to each other, it follows that $k_{33} = k_{11}$, $k_{44} = k_{22}$, $k_{55} = k_{33}$, and $k_{34} = k_{12}$, $k_{45} = k_{23}$. Thus, there is a reflection symmetry in the graph mapping vertex 1 to vertex 5 and vertex 2 to vertex 4.
        \item If the paths $V(3 \leftrightarrow 5)$ and $V(1 \leftrightarrow 3)$ are reflections of each other, it holds that $k_{44} = k_{22}$, $k_{55} = k_{11}$, and $k_{34} = k_{23}$. In this case, there exists a reflection symmetry mapping vertex 1 to vertex 5 and vertex 2 to vertex 4.
    \end{enumerate}

\textbf{Case II: Shorter path between $i$ and $j$ is of length two:} Again without loss of generality, we assume that $\sigma_{13} - \sigma_{24} \in I_\cg$. Using a similar degree argument as in the previous case, we can conclude that 
\begin{eqnarray*}
   k_{12}k_{23}\det(K_{4\leftrightarrow 5}) &=& k_{23}k_{34}\det(K_{5\leftrightarrow 1}), \text{ and} \\
   k_{34}k_{45}k_{15} \operatorname{det}(K_{2\leftrightarrow 2}) &=& k_{45}k_{15}k_{12} \operatorname{det}(K_{3\leftrightarrow 3}).
\end{eqnarray*}
Thus, $k_{12} = k_{34}$ and $k_{22} = k_{33}$. Since the paths on $V(4 \leftrightarrow 5)$ and $V(5 \leftrightarrow 1)$ are both of length one, we can apply Lemma~\ref{Lemma:Pathlength 1 and 2} to obtain the following two cases:
    \begin{enumerate}
        \item If the paths on $V(4 \leftrightarrow 5)$ and $V(5 \leftrightarrow 1)$ are equal, then it follows $k_{44} = k_{55} = k_{11}$ and $k_{45} = k_{15}$. This implies that there exists a reflection symmetry in the cycle mapping vertex $1$ to vertex $4$ and vertex $2$ to vertex $3$.
        \item If the paths on $V(4 \leftrightarrow 5)$ and $V(5 \leftrightarrow 1)$ are reflections of each other, it holds that $k_{44} = k_{11}$ and $k_{45} = k_{15}$. Thus, there exists a reflection symmetry mapping vertex 1 to vertex 4 and vertex 2 to vertex 3.
    \end{enumerate}
    
\textbf{Case III: $i$ and $j$ are equal:} In this case, we get a linear binomial of the form  \( \sigma_{ii} - \sigma_{xx} \). Without loss of generality, we can assume that \( \sigma_{11} - \sigma_{33} \in I_\cg \). By Corollary~\ref{Cor:J&WforCycle}, we know that $\det(K_{2\overset{c}\leftrightarrow 5})=\det(K_{4\overset{c}\leftrightarrow 2})$, which implies
\begin{eqnarray*}
&k_{22} k_{33} k_{44} k_{55} - k_{23}^2 k_{44} k_{55} - k_{34}^2 k_{22} k_{55} - k_{45}^2 k_{22} k_{33} + k_{23}^2 k_{45}^2 =\\
&k_{44} k_{55} k_{11} k_{22} - k_{45}^2 k_{11} k_{22} - k_{15}^2 k_{44} k_{22} - k_{12}^2 k_{44} k_{55} + k_{45}^2 k_{12}^2.
\end{eqnarray*}        
Comparing the monomials according to their vertex and edge degrees gives us that $k_{22} k_{33} k_{44} k_{55}=k_{44} k_{55} k_{11} k_{22}$, and $k_{23}^2 k_{45}^2=k_{45}^2 k_{12}^2$. Thus, we get $k_{33} = k_{11}$ and $k_{23} = k_{12}$. Substituting these equalities gives us an additional equality, which is $k_{34}^2 k_{22} k_{55} = k_{15}^2 k_{44} k_{22}$. This implies that $k_{34} = k_{15}$ and $k_{44} = k_{55}$. Therefore, there exists a reflection symmetry mapping vertex $1$ to vertex $3$ and vertex $4$ to vertex $5$.

Thus, we have shown that for any possible linear binomial that can appear in $I_\cg$, there must exist a corresponding reflection symmetry in $\cg$. A similar argument follows for colored cycles of length $3$. 

We now prove the theorem for $n=7$. Observe that the length of the shorter path between any two distinct vertices $i$ and $j$ can vary from one to three. However, the length of the complementary paths can be four or five. So, we first analyze a binomial where the complementary path is of length five. Without loss of generality, we assume that \(\sigma_{12} - \sigma_{67} \in I_{\cg}\), where \(\cg\) is a colored $7$-cycle.
By Corollary~\ref{Cor:J&WforCycle}, we know that 
\[
k_{12} \operatorname{det}(K_{3 \overset{c}{\leftrightarrow}7}) = 
k_{67} \operatorname{det}(K_{1 \overset{c}{\leftrightarrow}5}).
\]
Hence, it holds \(k_{12} = k_{67} \).
Expanding the equation \( \operatorname{det}(K_{3 \overset{c}{\leftrightarrow}7}) =  \operatorname{det}(K_{1 \overset{c}{\leftrightarrow}5}) \) and comparing the monomials with the same vertex and edge degree gives us
that $k_{66} k_{77} = k_{11} k_{22}$, and 
$k_{34}^2 k_{56}^2 k_{77} = k_{23}^2 k_{45}^2 k_{11}$, implying that $ k_{34} k_{56} = k_{23} k_{45}$ and $k_{77} = k_{11}$ and thus, $k_{66} = k_{22}$.
Similarly, matching the monomials with edge degree one, we get $k_{56}^2 k_{33} k_{44} k_{77} = k_{23}^2 k_{11} k_{44} k_{55}$, so $k_{56} = k_{23}$ and $k_{33} = k_{55}$. Therefore, $k_{34} = k_{45}$ and there exists a symmetry axis in the graph mapping vertex $1$ to vertex $7$, vertex $2$ to vertex $6$ and vertex $3$ to vertex $5$. A similar argument can be repeated when the complementary path is of length four.
\end{proof}

Note that a similar argument does not work for colored $4$-cycles as the two paths between $i$ and $j$ can have the same length, which prevents us from using the same vertex and edge degree argument. Similarly, for cycles of length $9$ and above, the length of the shorter paths can be larger than $3$, and hence Lemma \ref{Lemma:Pathlength 1 and 2} is no longer applicable.
Now, having shown that for paths of size $1$ or $2$, $P$ and $Q$ need to be identical or reflection of each other in order to have $\det(K_P)=\det(K_Q)$, we show next that this is not the case for longer paths. In particular, we come up with a color configuration, each for odd and even length paths such that $P$ and $Q$ are neither identical nor reflection of each other but still satisfy $\det(K_P)=\det(K_Q)$.

\begin{theorem}
\label{theorem:pathDetTypeEven}
Let $P$ and $Q$ be two colored paths on $m$ vertices with vertex sets $V_P = \{p_1, p_2, \dots, p_m\}$ and $V_Q = \{q_1, q_2, \dots, q_m\}$, and edge sets \(E_P = \{ \{p_i,p_{i+1}\} \mid i,j \in \{1,2, \dots, m-1\} \}\) and \(E_Q = \{ \{q_i,q_{i+1}\} \mid i,j \in \{1,2, \dots, m-1\} \}\), respectively. Let $m$ be even and the coloring of $P$ and $Q$ satisfy the following conditions:
\begin{enumerate}
\item $\lambda(\{p_i,p_{i+1}\}) = \lambda(\{q_i,q_{i+1}\})$ for every $i\in \{1,2,\ldots, m-1\}$,
\item $\lambda(p_1)=\lambda(p_{2n+1})$ and $\lambda(q_1)=\lambda(q_{2n+1})$ for every $n\in \{1,2,\ldots, m/2-1\}$ (odd vertices have the same color),
\item $\lambda(p_2)=\lambda(p_{2n})$ and $\lambda(q_2)=\lambda(q_{2n})$ for every $n\in \{1,2,\ldots, m/2\}$ (even vertices have the same color), 
\item $\lambda(p_1)=\lambda(q_2)$ and $\lambda(p_2)=\lambda(q_1)$. 
\end{enumerate}
Then, $\operatorname{det}(K_P) = \operatorname{det}(K_Q)$ even though $Q$ is neither identical nor the reflection of $P$ for $m>2$.
\end{theorem}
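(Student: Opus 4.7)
The strategy is to apply Lemma~\ref{lem:tridiagonal det} to both $K_P$ and $K_Q$ and match contributions term by term through a parity argument on the path vertices.

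First I would fix notation reflecting the color constraints. By conditions~(2) and (3), let $a \defas k_{p_1 p_1}$ denote the common value of $k_{vv}$ at every odd-indexed vertex of $P$, and $b \defas k_{p_2 p_2}$ the common value at every even-indexed vertex of $P$. By conditions~(3) and (4), in $Q$ the odd-indexed vertices all take the value $b$ while the even-indexed vertices all take the value $a$. Condition~(1) lets me set $e_i \defas k_{p_i p_{i+1}} = k_{q_i q_{i+1}}$. Lemma~\ref{lem:tridiagonal det} then expresses
\[
\det(K_P) = \sum_{\substack{S \subseteq E_P \\ \text{disjoint}}} (-1)^{|S|} \prod_{\{p_i,p_{i+1}\} \in S} e_i^{\,2} \prod_{v \in V_P \setminus V(S)} k_{vv},
\]
with the analogous expression for $\det(K_Q)$. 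Using the natural bijection $\{p_i,p_{i+1}\} \leftrightarrow \{q_i,q_{i+1}\}$ between $E_P$ and $E_Q$, the edge-weight factors $\prod e_i^{\,2}$ coincide for corresponding subsets.

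The heart of the proof is a parity observation: every edge of the path joins a vertex of odd index to one of even index, so a disjoint edge set of size $s$ covers exactly $s$ odd-indexed and $s$ even-indexed vertices. Since $m$ is even, each path contains $m/2$ vertices of each parity, so the uncovered vertices number $m/2 - s$ of each parity as well. The uncovered-vertex product is therefore $a^{m/2-s} b^{m/2-s}$ in $P$ and $b^{m/2-s} a^{m/2-s}$ in $Q$, and these are equal. Summing the (now identical) contributions over all disjoint edge subsets yields $\det(K_P) = \det(K_Q)$.

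Finally, I would verify that for $m > 2$ (and $a \neq b$, which is the interesting case) the path $Q$ is neither identical to $P$ nor its reflection: the color at vertex~$1$ distinguishes $Q$ from $P$, while the reflection of $P$ carries the reversed edge sequence $(e_{m-1}, \ldots, e_1)$, which matches $Q$'s $(e_1, \ldots, e_{m-1})$ only if the edge coloring happens to be palindromic---a condition not forced by any of the hypotheses. The main obstacle is simply recognising the bipartite-parity structure of the path, which is what makes the $a \leftrightarrow b$ swap a symmetry of the determinant; beyond that, the argument is purely combinatorial and I foresee no technical difficulty.
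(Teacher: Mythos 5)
Your proposal is correct and follows essentially the same route as the paper: the paper expands the determinant via the Leibniz formula restricted to products of disjoint adjacent transpositions, which is exactly the disjoint-edge-subset expansion of Lemma~\ref{lem:tridiagonal det} that you invoke, and the key step in both arguments is the observation that each edge covers one odd-indexed and one even-indexed vertex, so the uncovered diagonal entries contribute $a^{m/2-s}b^{m/2-s}$ symmetrically in $a$ and $b$. Your closing remark on when $Q$ genuinely differs from $P$ and its reflection is in fact slightly more careful than the paper's, which settles that point by inspection of the picture.
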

\begin{proof} 
The paths have the following structure:
    \begin{center}
    P: $\quad$
    \begin{tikzpicture}[scale=1]
        \node[draw, circle, fill=yellow] (1) at (0, 0) {};
        \node[draw, circle, fill=red] (2) at (2, 0) {};
        \node[draw, circle, fill=yellow] (3) at (4, 0) {};
        \node[draw, circle, fill=red] (n-2) at (6, 0) {};
        \node[draw, circle, fill=yellow] (n-1) at (8, 0) {};
        \node[draw, circle, fill=red] (n) at (10, 0) {};

        \draw[ForestGreen, thick] (1) -- (2);
        \draw[ProcessBlue, thick] (2) -- (3);
        \draw[black, thick, dashed] (3) -- (n-2);
        \draw[Sepia, thick] (n-2) -- (n-1);
        \draw[blue, thick] (n-1) -- (n);

        \node at (0,0.5) {$p_1$};
        \node at (2,0.5) {$p_2$};
        \node at (4,0.5) {$p_3$};
        \node at (6,0.5) {$p_{m-2}$};
        \node at (8,0.5) {$p_{m-1}$};
        \node at (10,0.5) {$p_m$};
        
    \end{tikzpicture}
    
    \vspace{0.5cm}
    
    Q: $\quad$
    \begin{tikzpicture}[scale=1]
        \node[draw, circle, fill=red] (1) at (0, 0) {};
        \node[draw, circle, fill=yellow] (2) at (2, 0) {};
        \node[draw, circle, fill=red] (3) at (4, 0) {};
        \node[draw, circle, fill=yellow] (n-2) at (6, 0) {};
        \node[draw, circle, fill=red] (n-1) at (8, 0) {};
        \node[draw, circle, fill=yellow] (n) at (10, 0) {};
    
        \draw[ForestGreen, thick] (1) -- (2);
        \draw[ProcessBlue, thick] (2) -- (3);
        \draw[black, thick, dashed] (3) -- (n-2);
        \draw[Sepia, thick] (n-2) -- (n-1);
        \draw[blue, thick] (n-1) -- (n);

        \node at (0,0.5) {$q_1$};
        \node at (2,0.5) {$q_2$};
        \node at (4,0.5) {$q_3$};
        \node at (6,0.5) {$q_{m-2}$};
        \node at (8,0.5) {$q_{m-1}$};
        \node at (10,0.5) {$q_m$};
    \end{tikzpicture}
    \end{center}

\noindent From the visual representation, it is clear that $Q$ is neither identical nor the reflection of $P$. We assign the odd vertices of $P$ the partial correlation $k_{11}$ and the even vertices of $P$ the partial correlation $k_{22}$. By the color constraints specified in the theorem, the concentration matrices $K_P = (p_{ij})$ and $K_Q = (q_{ij})$ meet the following conditions:
    \begin{align*}
        p_{ii} & = k_{11} \text{ and } q_{ii} = k_{22}, \quad \text{if } i \in \{1,2, \dots, m\} \text{ is odd},\\
        p_{ii} & = k_{22} \text{ and } q_{ii} = k_{11}, \quad \text{if } i \in \{1,2, \dots, m\} \text{ is even},\\
        p_{ij} &= q_{ij} = k_{ij}, \qquad \qquad \ \text{for all } \{p_i,p_j\} \in E_P \text{ and } \{q_i,q_j\} \in E_Q.
    \end{align*}
    \noindent The concentration matrices of $P$ and $Q$ are then given by:
    \[
     K_P = \begin{pmatrix}
        k_{11} & k_{12} & 0 & \cdots & 0 \\
        k_{12} & k_{22} & k_{23} & \ddots & \vdots \\
        0 & k_{23} & k_{11} & \ddots & 0 \\
        \vdots & \ddots & \ddots & \ddots & k_{m-1m} \\
        0 & \cdots & 0 & k_{m-1m} & k_{22}
    \end{pmatrix},
    \hspace{1cm}
    K_Q = \begin{pmatrix}
        k_{22} & k_{12} & 0 & \cdots & 0 \\
        k_{12} & k_{11} & k_{23} & \ddots & \vdots \\
        0 & k_{23} & k_{22} & \ddots & 0 \\
        \vdots & \ddots & \ddots & \ddots & k_{m-1m} \\
        0 & \cdots & 0 & k_{m-1m} & k_{11}
    \end{pmatrix}.
    \]
    
    \noindent To calculate the determinants of $K_P$ and $K_Q$, we apply the well-known Leibniz formula:
    \[
        \operatorname{det}(K_P)  
        = \sum_{\tau \in \mathbb{S}^m} \operatorname{sgn}(\tau) \prod_{i=1}^m p_{i \tau(i)}, 
        \qquad \quad
        \operatorname{det}(K_Q)  
        = \sum_{\tau \in \mathbb{S}^m} \operatorname{sgn}(\tau) \prod_{i=1}^m q_{i \tau(i)}.
    \]
    For the tridiagonal matrices $K_P$ and $K_Q$, any permutation $\tau$ where $\tau(i)$ maps $i$ to an index not adjacent to $i$ results in a product term that is zero, since the corresponding matrix entries $p_{i \tau(i)}$ and  $q_{i \tau(i)}$ are zero for non-adjacent indices. Consequently, only permutations that are either the identity or disjoint compositions of 2-cycles, which permute adjacent indices, contribute to the determinant. Let $\mathcal{T} \subseteq \mathbb{S}^m$ denote the set of such permutations. Then:
    \[
        \operatorname{det}(K_P)  
        = \sum_{\tau \in \mathcal{T}} \operatorname{sgn}(\tau) \prod_{i=1}^m p_{i \tau(i)},
        \qquad \quad
        \operatorname{det}(K_Q)  
        = \sum_{\tau \in \mathcal{T}} \operatorname{sgn}(\tau) \prod_{i=1}^m q_{i \tau(i)}.
    \]
    The products in these formulas can be decomposed into two components: the product of variables corresponding to indices permuted under the permutation $\tau$, and the product of variables corresponding to indices fixed by $\tau$:
    \[
        \operatorname{det}(K_P)  
        = \sum_{\tau \in \mathcal{T}} \operatorname{sgn}(\tau) \prod_{\substack{i=1, \\ \tau(i) \neq i}}^m p_{i \tau(i)} \prod_{\substack{i=1, \\ \tau(i) = i}}^m p_{i \tau(i)},
        \qquad \quad
        \operatorname{det}(K_Q)  
        = \sum_{\tau \in \mathcal{T}} \operatorname{sgn}(\tau) \prod_{\substack{i=1, \\ \tau(i) \neq i}}^m q_{i \tau(i)} \prod_{\substack{i=1, \\ \tau(i) = i}}^m q_{i \tau(i)}.
    \]
    Let $t_{\tau} \in \{0,1, \dots, \frac{m}{2}\}$ denote the number of 2-cycles within the permutation $\tau \in \mathcal{T}$.
    The first product in $\operatorname{det}(K_P)$ corresponds to the off-diagonal entries of $K_P$. Any $\tau \in \mathcal{T}$ permutes an even amount of $2 t_{\tau}$ off-diagonal entries, which are equal in $K_P$ and $K_Q$. 
    Since $m$ is even, $\tau$ fixes an even amount of diagonal entries. 
    Specifically, $\tau$ fixes $\frac{m}{2} - t_{\tau}$ diagonal entries corresponding to even indices $i$, and $\frac{m}{2} - t_{\tau}$ diagonal entries corresponding to odd indices $i$. Thus, the second product in the determinant formula includes $\frac{m}{2} - t_{\tau}$ factors of $k_{11}$ and $\frac{m}{2} - t_{\tau}$ factors of $k_{22}$.
    Thus, the determinant of $K_P$ simplifies as:
    \[
        \operatorname{det}(K_P)  
        = \sum_{\tau \in \mathcal{T}} \operatorname{sgn}(\tau) \ k_{11}^{\frac{m}{2}-t_{\tau}} \ k_{22}^{\frac{m}{2}-t_{\tau}} \prod_{\substack{i=1, \\ \tau(i) \neq i}}^m k_{i \tau(i)}.
    \]
    Similarly, for $K_Q$, the fixed diagonal entries contribute $\frac{m}{2} - t_{\tau}$ factors of $k_{22}$ and $\frac{m}{2} - t_{\tau}$ factors of $k_{11}$ for each $\tau \in \mathcal{T}$, giving us:
    \[
        \operatorname{det}(K_Q)  
        = \sum_{\tau \in \mathcal{T}} \operatorname{sgn}(\tau) \ k_{22}^{\frac{m}{2}-t_{\tau}} \ k_{11}^{\frac{m}{2}-t_{\tau}} \prod_{\substack{i=1, \\ \tau(i) \neq i}}^m k_{i \tau(i)}.
    \]
    Hence, we can conclude that $\det(K_P)=\det(K_Q)$.
\end{proof}

\noindent We illustrate the above construction with the following example.
\begin{example}
    Let the two paths \(P\) and \(Q\) be colored as follows:
    \begin{center}
    P: $\quad$
    \begin{tikzpicture}[scale=1]
        \node[draw, circle, fill=yellow] (1) at (0, 0) {};
        \node[draw, circle, fill=red] (2) at (2, 0) {};
        \node[draw, circle, fill=yellow] (3) at (4, 0) {};
        \node[draw, circle, fill=red] (4) at (6, 0) {};

        \draw[ForestGreen, thick] (1) -- (2);
        \draw[ProcessBlue, thick] (2) -- (3);
        \draw[blue, thick] (3) -- (4);
        
        \node at (0,0.5) {$p_1$};
        \node at (2,0.5) {$p_2$};
        \node at (4,0.5) {$p_3$};
        \node at (6,0.5) {$p_{4}$};
        
    \end{tikzpicture}
    
    \vspace{0.5cm}
    
    Q: $\quad$
    \begin{tikzpicture}[scale=1]
        \node[draw, circle, fill=red] (1) at (0, 0) {};
        \node[draw, circle, fill=yellow] (2) at (2, 0) {};
        \node[draw, circle, fill=red] (3) at (4, 0) {};
        \node[draw, circle, fill=yellow] (4) at (6, 0) {};

        \draw[ForestGreen, thick] (1) -- (2);
        \draw[ProcessBlue, thick] (2) -- (3);
        \draw[blue, thick] (3) -- (4);
        
        \node at (0,0.5) {$q_1$};
        \node at (2,0.5) {$q_2$};
        \node at (4,0.5) {$q_3$};
        \node at (6,0.5) {$q_{4}$};
 
    \end{tikzpicture}
    \end{center}
    Assign the color yellow the coefficient \(k_{11}\) and red the coefficient \(k_{22}\). Computing the determinant of $K_P$ gives us the following:
\begin{eqnarray*}
\det(K_P)&=& sgn(id)\prod_{i=1}^4p_{i(id)(i)} + sgn(12)\prod_{i=1}^4p_{i(12)(i)} + sgn(23)\prod_{i=1}^4p_{i(23)(i)} + \\
&&sgn(34)\prod_{i=1}^4p_{i(34)(i)} + sgn((12)(34))\prod_{i=1}^4p_{i(12)(34)(i)} \\
&=& k_{11}^2 k_{22}^2 - k_{12}^2 k_{11} k_{22} - k_{23}^2 k_{11} k_{22}- k_{34}^2 k_{11} k_{22} + k_{12}^2 k_{34}^2.
\end{eqnarray*}
    
As $P$ and $Q$ have the same edge colors and alternating vertex colors, the same monomial is obtained for every disjoint union of $2$-cycle permutations in $\det(K_P)$ and $\det(K_Q)$. This shows that the two determinants are indeed equal. This coloring configuration can be interpreted as a local reflection of each edge.
\end{example}

\begin{theorem}
\label{theorem:PathDetTypeOdd}
Let $P$ and $Q$ be two colored paths on $m$ vertices with vertex sets $V_P = \{p_1, p_2, \dots, p_m\}$ and $V_Q = \{q_1, q_2, \dots, q_m\}$, and edge sets \(E_P = \{ \{p_i,p_{i+1}\} \mid i \in \{1,2, \dots, m-1\} \}\) and \(E_Q = \{ \{q_i,q_{i+1}\} \mid i, \in \{1,2, \dots, m-1\} \}\), respectively. Let $m$ be odd and the coloring of $P$ and $Q$ satisfy the following conditions:
\begin{enumerate}
\item $\lambda(p_1)=\lambda(p_{2n+1})= \lambda(q_1)=\lambda(q_{2n+1})$ for every $n\in \{1,2,\ldots, (m-1)/2\}$, 
\item $\lambda(p_{2n})=\lambda(q_{2n})$ for every $n\in \{1,2,\ldots,(m-1)/2\}$,
\item $\lambda(\{p_i,p_{i+1}\}) = \lambda(\{q_{j},q_{j+1}\})$ and $\lambda(\{p_j,p_{j+1}\}) = \lambda(\{q_{i},q_{i+1}\})$, for all odd $i \in \{1,2, \dots, m\}$ and all even $j \in \{1,2, \dots, m\}$.
\end{enumerate}
Then, $\operatorname{det}(K_P) = \operatorname{det}(K_Q)$ even though $Q$ is neither identical nor the reflection of $P$ for $m>3$.
\end{theorem}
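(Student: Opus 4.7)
My plan is to compute both determinants using Lemma~\ref{lem:tridiagonal det} and compare them as polynomials in the even-vertex colors, showing that all corresponding coefficients agree. Denote by $k_{11}$ the common color shared by all odd-indexed vertices of $P$ and $Q$, by $\alpha_{2n}$ the shared color of $p_{2n}$ and $q_{2n}$, and by $a,b$ the two edge colors; condition~(3) forces $P$ to have edges $a,b,a,b,\dots$ at positions $1,2,\dots,m-1$ while $Q$ has edges $b,a,b,a,\dots$, i.e.\ with $a$ and $b$ interchanged at every edge position.

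First, I would group the sum provided by Lemma~\ref{lem:tridiagonal det} according to the subset $T\subseteq\{2,4,\dots,m-1\}$ of even vertices that are \emph{not} covered by the disjoint edge set $S$, and set $\bar T=\{2,4,\dots,m-1\}\setminus T$. Since every edge of a path covers exactly one odd and one even vertex, $|S|=|\bar T|$ and the number of uncovered odd vertices is $(m+1)/2-|\bar T|$. Consequently, the coefficient of $\prod_{v\in T}\alpha_{v}$ in $\det(K_P)$ equals
\[
(-1)^{|\bar T|}\,k_{11}^{(m+1)/2-|\bar T|}\,\Sigma_P(\bar T), \qquad \Sigma_P(\bar T)=\sum_{\substack{S\subseteq E_P\ \text{disjoint} \\ V(S)\cap\{2,4,\ldots,m-1\}=\bar T}}\prod_{e\in S} c_P(e)^2,
\]
with an analogous expression for $\det(K_Q)$ involving $\Sigma_Q(\bar T)$. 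The signs and the $k_{11}$-powers already match, so it suffices to prove $\Sigma_P(\bar T)=\Sigma_Q(\bar T)$ for every admissible $\bar T$.

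Next, I would decompose $\bar T$ into \emph{blocks}, i.e.\ maximal runs of consecutive even vertices $\{2k,2k+2,\dots,2k+2s-2\}\subseteq\bar T$. Each vertex in such a block must be covered either by its left edge (at an odd edge-position) or its right edge (at an even edge-position), and a direct check shows that the disjointness constraint forbids any adjacent pair inside the block from being covered ``right, then left'', since the $R$-edge of $v_i$ and the $L$-edge of $v_{i+1}$ share the odd vertex between them. Hence the valid local patterns inside a block of size $s$ are precisely $L\cdots L\,R\cdots R$ with $j$ $L$-edges followed by $s-j$ $R$-edges, for some $j\in\{0,1,\dots,s\}$. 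Since distinct blocks of $\bar T$ are separated by at least one vertex in $T$, the choices in different blocks are mutually independent, so $\Sigma_P(\bar T)$ factorises as $\prod_{B}\Sigma_P(B)$, and for a block of size $s$,
\[
\Sigma_P(B)=\sum_{j=0}^{s} a^{2j}b^{2(s-j)}, \qquad \Sigma_Q(B)=\sum_{j=0}^{s} b^{2j}a^{2(s-j)}.
\]

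The reindexing $j\mapsto s-j$ reveals that $\Sigma_P(B)=\Sigma_Q(B)$, since $\sum_{j=0}^{s} x^{2j}y^{2(s-j)}$ is symmetric in $x$ and $y$. Therefore $\Sigma_P(\bar T)=\Sigma_Q(\bar T)$ for every $\bar T$, and hence $\det(K_P)=\det(K_Q)$. That $Q$ is in general neither identical to nor the reflection of $P$ for $m>3$ is visible directly from the diagrams: the reflection reverses both the list of even-vertex colors and the edge-color alternation, whereas $Q$ preserves the order of the even-vertex colors while swapping only the edge colors, so as soon as the $\alpha_{2n}$ are not all equal and $a\neq b$, the two paths are genuinely distinct. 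The main obstacle I anticipate is formulating the ``no $RL$ within a block'' constraint cleanly and confirming that the block decomposition really renders the choices in distinct blocks independent; once this combinatorial picture is in place, the symmetry of $\sum_j x^{2j}y^{2(s-j)}$ delivers the result immediately.
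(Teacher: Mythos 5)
Your proof is correct, but it takes a genuinely different route from the paper's. The paper proves Theorem \ref{theorem:PathDetTypeOdd} by induction on $m$: it expands both determinants via the tridiagonal three-term recurrence applied twice along the last rows, and shows that $\det(K_P)-\det(K_Q)$ collapses to $k_{23}^2k_{12}^2(\det(K_{Q-4})-\det(K_{P-4}))$, which vanishes by the induction hypothesis at $m-2$ and $m-4$. You instead argue directly from the matching expansion of Lemma \ref{lem:tridiagonal det}: grouping the disjoint edge sets $S$ by the set $\bar T$ of even vertices they cover (each edge covers exactly one even and one odd vertex, so $|S|=|\bar T|$ and the sign and the $k_{11}$-power depend only on $|\bar T|$), decomposing $\bar T$ into maximal blocks, noting that the only admissible covering patterns in a block of size $s$ are $L^jR^{s-j}$ because an $R$ followed by an $L$ would reuse the intermediate odd vertex, and reducing everything to the symmetry of $\sum_{j=0}^{s}a^{2j}b^{2(s-j)}$ under swapping $a$ and $b$. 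The combinatorial points you flag as potential obstacles do go through: distinct blocks are separated by an uncovered even vertex, so their candidate edges are vertex-disjoint and the block contributions factorise. One cosmetic remark: you phrase the grouping as extracting ``the coefficient of $\prod_{v\in T}\alpha_v$,'' which is ambiguous if some of the $\alpha_{2n}$ coincide; but since you in fact prove the contributions agree for each fixed $\bar T$ separately, summing over $\bar T$ yields the equality of determinants regardless. Your approach is longer to set up but more structural --- it exhibits the symmetric polynomial responsible for the coincidence and makes precise the paper's informal remark that this configuration is a ``local reflection of two edges at a time'' --- while the paper's induction is shorter but less illuminating. Your handling of the non-identity/non-reflection claim (noting it requires $a\neq b$ and the even-vertex colors not all equal) is, if anything, more careful than the paper's, which reads this off the figure.
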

\begin{proof} 
    The paths have the following structure:
    \begin{center}
    P: $\quad$
    \begin{tikzpicture}[scale=1]
        \node[draw, circle, fill=yellow] (1) at (0, 0) {};
        \node[draw, circle, fill=orange] (2) at (2, 0) {};
        \node[draw, circle, fill=yellow] (3) at (4, 0) {};
        \node[draw, circle, fill=magenta] (4) at (6, 0) {};
        \node[draw, circle, fill=yellow] (5) at (8, 0) {};
        \node[draw, circle, fill=yellow] (n-2) at (10, 0) {};
        \node[draw, circle, fill=red] (n-1) at (12, 0) {};
        \node[draw, circle, fill=yellow] (n) at (14, 0) {};

        \draw[blue, thick] (1) -- (2);
        \draw[green, thick] (2) -- (3);
        \draw[blue, thick] (3) -- (4);
        \draw[green, thick] (4) -- (5);
        \draw[black, thick, dashed] (5) -- (n-2);
        \draw[blue, thick] (n-2) -- (n-1);
        \draw[green, thick] (n-1) -- (n);

        \node at (0,0.5) {$p_1$};
        \node at (2,0.5) {$p_2$};
        \node at (4,0.5) {$p_3$};
        \node at (6,0.5) {$p_{4}$};
        \node at (8,0.5) {$p_{5}$};
        \node at (10,0.5) {$p_{m-2}$};
        \node at (12,0.5) {$p_{m-1}$};
        \node at (14,0.5) {$p_m$};
        
    \end{tikzpicture}
    
    \vspace{0.5cm}
    
    Q: $\quad$
    \begin{tikzpicture}[scale=1]
        \node[draw, circle, fill=yellow] (1) at (0, 0) {};
        \node[draw, circle, fill=orange] (2) at (2, 0) {};
        \node[draw, circle, fill=yellow] (3) at (4, 0) {};
        \node[draw, circle, fill=magenta] (4) at (6, 0) {};
        \node[draw, circle, fill=yellow] (5) at (8, 0) {};
        \node[draw, circle, fill=yellow] (n-2) at (10, 0) {};
        \node[draw, circle, fill=red] (n-1) at (12, 0) {};
        \node[draw, circle, fill=yellow] (n) at (14, 0) {};
    
        \draw[green, thick] (1) -- (2);
        \draw[blue, thick] (2) -- (3);
        \draw[green, thick] (3) -- (4);
        \draw[blue, thick] (4) -- (5);
        \draw[black, thick, dashed] (5) -- (n-2);
        \draw[green, thick] (n-2) -- (n-1);
        \draw[blue, thick] (n-1) -- (n);

        \node at (0,0.5) {$q_1$};
        \node at (2,0.5) {$q_2$};
        \node at (4,0.5) {$q_3$};
        \node at (6,0.5) {$q_{4}$};
        \node at (8,0.5) {$q_{5}$};
        \node at (10,0.5) {$q_{m-2}$};
        \node at (12,0.5) {$q_{m-1}$};
        \node at (14,0.5) {$q_m$};
    \end{tikzpicture}
    \end{center}
    By the color constraints specified in the theorem, the concentration matrices $K_P = (p_{ij})$ and $K_Q = (q_{ij})$ meet the following conditions:
    \begin{align*}
        p_{ii} & = q_{ii} = k_{ii}, \qquad \ \qquad \qquad \text{for all even } i \in \{1,2, \dots, m\},\\
        p_{ii} & = p_{jj} = q_{ii} = q_{jj} = k_{11}, \quad \text{for all odd } i,j \in \{1,2, \dots, m\},\\
        p_{ii+1} &= q_{i+1i+2} = k_{12}, \qquad \qquad \ \text{for all odd } i \in \{1,2, \dots, m-2\},\\
        p_{ii+1} &= q_{i+1i+2} = k_{23}, \qquad \qquad \ \text{for all even } i \in \{1,2, \dots, m-1\} \operatorname{mod} m-1.
    \end{align*}
    \noindent The concentration matrices of $P$ and $Q$ are therefore given by:
    \[
     K_P = \begin{pmatrix}
        k_{11} & k_{12} & 0 & 0& \cdots & \cdots & 0 \\
        k_{12} & k_{22} & k_{23} & 0 & \cdots & \cdots & \vdots \\
        0 & k_{23} & k_{11} & k_{12} & 0 & \cdots & 0 \\
        0 & 0 & k_{12} & k_{44} & k_{23} & \cdots & 0 \\        
        \vdots & \ddots & \ddots & \ddots & \ddots &  \ddots & \vdots \\
        \vdots & \ddots & \ddots & \ddots & \ddots &  \ddots & k_{23} \\
        0 & \cdots & \cdots & \cdots & 0 & k_{23} & k_{11}
    \end{pmatrix},
    \hspace{1cm}
    K_Q = \begin{pmatrix}
        k_{11} & k_{23} & 0 & 0& \cdots & \cdots & 0 \\
        k_{23} & k_{22} & k_{12} & 0 & \cdots & \cdots & \vdots \\
        0 & k_{12} & k_{11} & k_{23} & 0 & \cdots & 0 \\
        0 & 0 & k_{23} & k_{44} & k_{12} & \cdots & 0 \\        
        \vdots & \ddots & \ddots & \ddots & \ddots &  \ddots & \vdots \\
        \vdots & \ddots & \ddots & \ddots & \ddots &  \ddots & k_{12} \\
        0 & \cdots & \cdots & \cdots & 0 & k_{12} & k_{11}
    \end{pmatrix}.
    \]
We prove this theorem by applying induction on $m$, the number of vertices in $P$ and $Q$. For $m=3$, observe that $Q$ is actually the reflection of $P$, and thus we know that $\det(K_P)=\det(K_Q)$. Similarly, for $m=5$, we get the matrices $K_P$ and $K_Q$ as follows:
\[ K_P =
    \left(
    \begin{NiceMatrix}[create-large-nodes]
    k_{11} & k_{12} & 0      & 0      & 0 \\
    k_{12} & k_{22} & k_{23} & 0      & 0 \\
    0      & k_{23} & k_{11} & k_{12} & 0 \\
    0      & 0      & k_{12} & k_{44} & k_{23} \\
    0      & 0      & 0      & k_{23} & k_{11}
    \CodeAfter
      \tikz {
        \draw[black, thick] (1-1.north west) rectangle (3-3.south east);
        \node at ($(1-1.north west) + (1.15,0.25)$) {\textbf{$K_{P-2}$}};
      }
    \end{NiceMatrix}
    \right),
    \hspace{1cm}
     K_Q =
    \left(
    \begin{NiceMatrix}[create-large-nodes]
    k_{11} & k_{23} & 0      & 0      & 0 \\
    k_{23} & k_{22} & k_{12} & 0      & 0 \\
    0      & k_{12} & k_{11} & k_{23} & 0 \\
    0      & 0      & k_{23} & k_{44} & k_{12} \\
    0      & 0      & 0      & k_{12} & k_{11}
    \CodeAfter
      \tikz {
        \draw[black, thick] (1-1.north west) rectangle (3-3.south east);
        \node at ($(1-1.north west) + (1.15,0.25)$) {\textbf{$K_{Q-2}$}};
      }
    \end{NiceMatrix}
    \right).
    \]
We state the matrices for $m=5$ for the purpose of clarity. The idea here is to expand the determinant through the last row and write the expression in terms of the determinants of $K_{P-2}$ and $K_{P-4}$, where $P-2$ and $P-4$ are the colored subpaths of $P$ induced on the first $m-2$ and $m-4$ vertices, respectively. For $m=5$, the determinants of $K_P$ and $K_Q$ can be written as:
\begin{eqnarray*}
\det(K_P) 
    &=& k_{23}^2 \operatorname{det}(K_{P-2}) - k_{11} k_{44} \operatorname{det}(K_{P-2}) + k_{11} k_{12}^2 \operatorname{det} 
    \begin{pmatrix}
        k_{11} & k_{12} \\
        k_{12} & k_{22} \\ 
    \end{pmatrix}, \text{ and} \\
   \det(K_Q) &=& k_{12}^2 \operatorname{det}(K_{Q-2}) - k_{11} k_{44} \operatorname{det}(K_{Q-2}) + k_{11} k_{23}^2 \operatorname{det} 
    \begin{pmatrix}
        k_{11} & k_{23} \\
        k_{23} & k_{22} \\
    \end{pmatrix}. 
    \end{eqnarray*}
As $\det(K_{P-2})=\det(K_{Q-2})$ (as the statement is true for $m=3$), subtracting $\det(K_Q)$ from $\det(K_P)$ and interchanging $\det(K_{P-2})$ and $\det(K_{Q-2})$ gives us:
\begin{eqnarray*}
\det(K_P) - \det(K_Q) &=& 
k_{23}^2 (\operatorname{det}(K_{Q-2}) - k_{11} \operatorname{det}\begin{pmatrix}
        k_{11} & k_{23} \\
        k_{23} & k_{22} \\
    \end{pmatrix}) 
    -  \\
    &&k_{12}^2 (\operatorname{det}(K_{P-2}) - k_{11} \operatorname{det} 
    \begin{pmatrix}
        k_{11} & k_{12} \\
        k_{12} & k_{22} \\
    \end{pmatrix}) \\
    &=& k_{23}^2(\det(K_{Q-2})-k_{11}\det(K_{Q-3}))-k_{12}^2(\det(K_{P-2})-k_{11}\det(K_{P-3})) \\
   &=& k_{23}^2(k_{12}^2 \det(K_{Q-4})) - k_{12}^2(k_{23}^2\det(K_{P-4})) \\
   &=& 0,
\end{eqnarray*}
as $\det(K_{P-4})$ is also equal to $\det(K_{Q-4})$. Now, assuming the induction hypothesis on $m-2$ and $m-4$, we use the exact same technique as above to prove the induction statement. We have,
\begin{eqnarray*}
    \det(K_P)&=& k_{23}^2 \det(K_{P-2})- k_{11}k_{m-1m-1}\det(K_{P-2}) +k_{11}k_{12}^2\det(K_{P-3}), \text{ and} \\
    \det(K_Q)&=& k_{12}^2 \det(K_{Q-2})- k_{11}k_{m-1m-1}\det(K_{Q-2}) +k_{11}k_{23}^2\det(K_{Q-3}).
\end{eqnarray*}
As $\det(K_{P-2})$ is equal to $\det(K_{Q-2})$ by the induction hypothesis, interchanging them gives us,
\begin{eqnarray*}
    \det(K_P)-\det(K_Q)=&=& k_{23}^2(\det(K_{Q-2})-k_{11}\det(K_{Q-3}))-k_{12}^2(\det(K_{P-2})-k_{11}\det(K_{P-3})) \\
   &=& k_{23}^2(k_{12}^2 \det(K_{Q-4})) - k_{12}^2(k_{23}^2\det(K_{P-4})) \\
   &=& 0,
\end{eqnarray*}
as $\det(K_{Q-4})=\det(K_{P-4})$ by the induction hypothesis, thus concluding the proof.
\end{proof}

\noindent We illustrate the above configuration with the following example.
\begin{example}
    Let \(P\) and \(Q\) be two paths colored as follows:
        \begin{center}
    P: $\quad$
    \begin{tikzpicture}[scale=1]
        \node[draw, circle, fill=yellow] (1) at (0, 0) {};
        \node[draw, circle, fill=orange] (2) at (2, 0) {};
        \node[draw, circle, fill=yellow] (3) at (4, 0) {};
        \node[draw, circle, fill=magenta] (4) at (6, 0) {};
        \node[draw, circle, fill=yellow] (5) at (8, 0) {};
        
        \draw[blue, thick] (1) -- (2);
        \draw[green, thick] (2) -- (3);
        \draw[blue, thick] (3) -- (4);
        \draw[green, thick] (4) -- (5);

        \node at (0,0.5) {$p_1$};
        \node at (2,0.5) {$p_2$};
        \node at (4,0.5) {$p_3$};
        \node at (6,0.5) {$p_{4}$};
        \node at (8,0.5) {$p_{5}$};
        
    \end{tikzpicture}
    
    \vspace{0.5cm}
    
    Q: $\quad$
    \begin{tikzpicture}[scale=1]
        \node[draw, circle, fill=yellow] (1) at (0, 0) {};
        \node[draw, circle, fill=orange] (2) at (2, 0) {};
        \node[draw, circle, fill=yellow] (3) at (4, 0) {};
        \node[draw, circle, fill=magenta] (4) at (6, 0) {};
        \node[draw, circle, fill=yellow] (5) at (8, 0) {};
    
        \draw[green, thick] (1) -- (2);
        \draw[blue, thick] (2) -- (3);
        \draw[green, thick] (3) -- (4);
        \draw[blue, thick] (4) -- (5);

        \node at (0,0.5) {$q_1$};
        \node at (2,0.5) {$q_2$};
        \node at (4,0.5) {$q_3$};
        \node at (6,0.5) {$q_{4}$};
        \node at (8,0.5) {$q_{5}$};
    \end{tikzpicture}
    \end{center}
Observe that $P$ and $Q$ satisfy the color configuration as stated in Theorem \ref{theorem:PathDetTypeOdd}. Computing the determinant of $K_P$ and $K_Q$ gives us, 
\begin{eqnarray*}
\det(K_P)= \det(K_Q) &=&  k_{11}^3 k_{22} k_{44} - k_{12}^2 k_{11}^2 k_{44} -k_{23}^2 k_{11}^2 k_{44} - k_{12}^2 k_{11}^2 k_{22} -k_{23}^2 k_{11}^2 k_{22}
    + \\
    &&k_{12}^4 k_{11} + k_{12}^2 k_{23}^2 k_{11} + k_{23}^4 k_{11}.
\end{eqnarray*}
This coloring configuration can be interpreted as local reflection of two edges at a time.
\end{example}

The purpose of constructing these two path color configurations will be evident in the next section, where we use these configurations to construct the counterexamples to the conjecture.

\section{Counterexamples to Conjecture \ref{Conj:ifandonlyif} and its potential strengthening}\label{section:counterexamples and strengthening}
In this section, we construct counterexamples to Conjecture \ref{Conj:ifandonlyif} by using the non trivial path color configurations obtained in Section \ref{section:analysis of path graphs}. We also analyze the potential ways to strengthen the conjecture. We begin with counterexamples on an $8$-cycle and a $10$-cycle. The constructions provided below illustrate a general technique for generating counterexamples to the conjecture.

\begin{example}\label{CounterexampleConj6}
Let $\cg_1$ be the $8$-cycle as shown in Figure \ref{figure:8 cycle counterexample}.
This cycle is constructed by using the path color configuration described in Theorem \ref{theorem:pathDetTypeEven} to obtain the linear binomial $\sigma_{14}-\sigma_{58}$. Specifically, we draw the paths $1\leftrightarrow 4$ and $5\leftrightarrow 8$ using the color configuration obtained in Theorem \ref{theorem:pathDetTypeEven}, and join the two paths by edges $\{8,1\}$ and $\{4,5\}$.
By Theorem \ref{theorem:pathDetTypeEven}, we know that
    \[
    \det\big(K_{ \setminus 1 \leftrightarrow 4}\big) = \det\big(K_{ \setminus 5 \leftrightarrow 8}\big).
    \]
Furthermore, the relevant determinants for the complementary contributions to the covariance entries correspond to the paths \(\{2-3\}\) and \(\{6-7\}\). These are reflections of each other, guaranteeing that
\[
    \det\big(K_{ \setminus 1 \overset{c}{ \leftrightarrow}4}\big) = \det\big(K_{ \setminus 5 \overset{c}{\leftrightarrow}8}\big).
\]
The construction of the cycle also ensures that the edge colors along the shorter and complementary paths of $\sigma_{14}$ and $\sigma_{58}$ are identical. 
The direct computation of the vanishing ideal indeed confirms $\sigma_{14}-\sigma_{58}\in I_{\cg_1}$. 
Now, observe that the due to the vertex color configuration, the only potential symmetry in this \(8\)-cycle is the reflection along the axis passing through the edges $\{8,1\}$ and $\{4,5\}$. 
However, this reflection maps edge $\{1,2\}$ to $\{7,8\}$, which differ in color, and hence, it is not a valid graph symmetry. Therefore, $\cg_1$ has no graph symmetry, giving us the first counterexample to Conjecture~\ref{Conj:ifandonlyif}.

Similarly, let \(\cg_2\) be color $10$-cycle as shown in Figure \ref{figure:10 cycle counterexample}.
This cycle is constructed in way to obtain the linear binomial \(\sigma_{15}-\sigma_{610}\) in the vanishing ideal \(I_{\cg_2}\) while using the color configuration from Theorem~\ref{theorem:PathDetTypeOdd}.
The paths on vertices \(V \big( \cg_2 \setminus 1 \leftrightarrow 5 \big)\) and \(V \big( \cg_2 \setminus 6 \leftrightarrow 10 \big)\) satisfy the conditions defined in Theorem~\ref{theorem:PathDetTypeOdd}. The paths on vertices \(V \big( \cg_2 \setminus 1 \overset{c}{ \leftrightarrow}5 \big)\) and \(V \big( \cg_2 \setminus 6\overset{c}{ \leftrightarrow }10 \big)\) also satisfy the configuration, ensuring that 
\begin{align*}
\det\big(K_{ \setminus 1 \leftrightarrow 5}\big) &= \det\big(K_{ \setminus 6 \leftrightarrow 10}\big), \ \text{and}\\
\det\big(K_{ \setminus 1 \overset{c}{\leftrightarrow}5}\big) &= \det\big(K_{ \setminus 6 \overset{c}{\leftrightarrow}10}\big).
\end{align*}
Since the edge colors along the shorter and complementary paths, respectively, are also equal, by Corollary~\ref{Cor:J&WforCycle} we know that \(\sigma_{15}-\sigma_{610} \in I_{\cg_2}\), even though there is no graph symmetry in $\cg_2$. 
\begin{figure}[H]
\centering
\begin{minipage}{0.45\textwidth}
\begin{tikzpicture}[scale=0.8, every node/.style={circle, draw, inner sep=1pt, minimum size=4mm, font=\small}]
    \node[fill=red, label=above:{1}] (1) at (90:2) {};
    \node[fill=yellow, label=above right:{2}] (2) at (45:2) {};
    \node[fill=red, label=right:{3}] (3) at (0:2) {};
    \node[fill=yellow, label=below right:{4}] (4) at (-45:2) {};
    \node[fill=yellow, label=below:{5}] (5) at (-90:2) {};
    \node[fill=red, label=below left:{6}] (6) at (-135:2) {};
    \node[fill=yellow, label=left:{7}] (7) at (180:2) {};
    \node[fill=red, label=above left:{8}] (8) at (135:2) {};

    \draw[brown, thick] (1) -- (2);
    \draw[green, thick] (2) -- (3);
    \draw[blue, thick] (3) -- (4);
    \draw[black, thick] (4) -- (5);
    \draw[brown, thick] (5) -- (6);
    \draw[green, thick] (6) -- (7);
    \draw[blue, thick] (7) -- (8);
    \draw[black, thick] (8) -- (1);

    \draw[dashed, thick] (-0.9,2.3) -- (0.9,-2.3);
\end{tikzpicture}
\caption{$\cg_1$: Colored $8$-cycle counterexample}\label{figure:8 cycle counterexample}
\label{fig:8cycle}
\end{minipage}
\centering
    \begin{minipage}{0.45\textwidth}
        \centering
        \begin{tikzpicture}[scale=0.8, every node/.style={circle, draw, inner sep=1pt, minimum size=4mm, font=\small}]
            \node[fill=yellow, label=above:{1}] (1) at (90:2.5) {};
            \node[fill=orange, label=above right:{2}] (2) at (54:2.5) {};
            \node[fill=yellow, label=right:{3}] (3) at (18:2.5) {};
            \node[fill=purple, label=below right:{4}] (4) at (-18:2.5) {};
            \node[fill=yellow, label=below:{5}] (5) at (-54:2.5) {};
            \node[fill=yellow, label=below:{6}] (6) at (-90:2.5) {};
            \node[fill=orange, label=below left:{7}] (7) at (-126:2.5) {};
            \node[fill=yellow, label=left:{8}] (8) at (-162:2.5) {};
            \node[fill=purple, label=above left:{9}] (9) at (162:2.5) {};
            \node[fill=yellow, label=above:{10}] (10) at (126:2.5) {};
            
            \draw[blue, thick] (1) -- (2);
            \draw[green, thick] (2) -- (3);
            \draw[blue, thick] (3) -- (4);
            \draw[green, thick] (4) -- (5);
            \draw[brown, thick] (5) -- (6);
            \draw[green, thick] (6) -- (7);
            \draw[blue, thick] (7) -- (8);
            \draw[green, thick] (8) -- (9);
            \draw[blue, thick] (9) -- (10);
            \draw[brown, thick] (10) -- (1);
        \end{tikzpicture}
        \caption{$\cg_2$: Colored $10$-cycle counterexample}
        \label{figure:10 cycle counterexample}
    \end{minipage}%
\end{figure}
\end{example}

Notice that in the above example for $\cg_1$, the existence of the paths $\{1-2-3-4\}$ and $\{5-6-7-8\}$ satisfying the conditions in Theorem \ref{theorem:pathDetTypeEven} was crucial to obtain the counterexample. This is because, if the path $\{1-2-3-4\}$ was identical or a reflection of $\{5-6-7-8\}$, it would have resulted in the existence of a graph symmetry, and hence, not violating the conjecture. 
A similar argument applies to the above counterexample $\cg_2$.

Applying the technique illustrated in Example~\ref{CounterexampleConj6} always yields cycles of even length. However, our computational study also provided us with counterexamples to Conjecture~\ref{Conj:ifandonlyif} which did not use the non-trivial path color configurations from Theorem \ref{theorem:pathDetTypeEven} and \ref{theorem:PathDetTypeOdd}. The following example illustrates this, and in particular provides a counterexample on a cycle of odd length.

\begin{example}
Let $\cg_1$ be the $4$-cycle shown in Figure \ref{Figure:4cycleCounter}. The paths on vertices $3 \leftrightarrow 4$ and $1 \leftrightarrow 2$ are identical, thereby confirming that 
\[
\operatorname{det}(K_{\setminus 1 \leftrightarrow 2}) 
=\operatorname{det}(K_{\setminus 3 \leftrightarrow 4}). 
\]
Therefore, the linear binomial $\sigma_{12} - \sigma_{34}$ lies in $I_{\cg_2}$. However, there is no symmetry in this $4$-cycle due to its specific coloring.

We also get a counterexample of a colored cycle $\cg_2$ of odd size, as shown in Figure \ref{figure:odd cycle counterexample}, confirming that the conjecture is not true for cycles of any size.
The paths on vertices \(V \big( \cg_2 \setminus 1 \leftrightarrow 3 \big)\) and \(V \big( \cg_2 \setminus 7 \leftrightarrow 9 \big)\)
are identical. Likewise, the ones on \(V \big( \cg_2 \setminus 1 \overset{c}{ \leftrightarrow} 3 \big)\) and \(V \big( \cg_2 \setminus 7 \overset{c}{ \leftrightarrow} 9 \big)\)
are also equal. As a result, we get that 
\begin{align*}
    \operatorname{det}(K_{\setminus 1 \leftrightarrow 3}) &= \operatorname{det}(K_{\setminus 7 \leftrightarrow 9}), \ \text{and} \\
    \operatorname{det}(K_{\setminus 1 \overset{c}{\leftrightarrow} 3}) &= \operatorname{det}(K_{\setminus 7 \overset{c}{\leftrightarrow} 9}),
    \end{align*}
implying that the linear binomial $\sigma_{13}-\sigma_{79}$ lies in the vanishing ideal \(I_{\cg_2}\).
However, there is no symmetry in this $9$-cycle due to the existence of the unique black edge $\{9,1\}$ and the way the other edges are colored, thereby contradicting Conjecture \ref{Conj:ifandonlyif}.
\begin{figure}[H]
    \begin{minipage}{0.45\textwidth}
    \centering
    \begin{tikzpicture}[scale=0.75]
        \node[draw, circle, fill=red] (1) at (0, 2) {};
        \node[draw, circle, fill=yellow] (2) at (2, 2) {};
        \node[draw, circle, fill=red] (3) at (2, 0) {};
        \node[draw, circle, fill=yellow] (4) at (0, 0) {};
        
        \draw[thick, blue] (1) -- (2);
        \draw[thick, brown] (2) -- (3);
        \draw[thick, blue] (3) -- (4);
        \draw[thick, green] (4) -- (1);
    
        \node at (-0.5, 2.6) {1};
        \node at (2.5, 2.6) {2};
        \node at (2.5, -0.6) {3};
        \node at (-0.5, -0.6) {4};
    \end{tikzpicture}
    \caption{$\cg_1$: Colored 4-cycle counterexample}
    \label{Figure:4cycleCounter}
    \end{minipage}%
    \begin{minipage}{0.45\textwidth}
        \centering
        \begin{tikzpicture}[scale=0.9, every node/.style={circle, draw, inner sep=1pt, minimum size=4mm, font=\small}]
            \node[fill=red, label=above:{1}] (1) at (90:2.2) {};
            \node[fill=yellow, label=above right:{2}] (2) at (50:2.2) {};
            \node[fill=red, label=right:{3}] (3) at (10:2.2) {};
            \node[fill=red, label=below right:{4}] (4) at (-30:2.2) {};
            \node[fill=yellow, label=below:{5}] (5) at (-70:2.2) {};
            \node[fill=red, label=below left:{6}] (6) at (-110:2.2) {};
            \node[fill=red, label=left:{7}] (7) at (-150:2.2) {};
            \node[fill=yellow, label=above left:{8}] (8) at (-190:2.2) {};
            \node[fill=red, label=above:{9}] (9) at (-230:2.2) {};
        
            \draw[blue, thick] (1) -- (2);
            \draw[green, thick] (2) -- (3);
            \draw[brown, thick] (3) -- (4);
            \draw[blue, thick] (4) -- (5);
            \draw[green, thick] (5) -- (6);
            \draw[brown, thick] (6) -- (7);
            \draw[blue, thick] (7) -- (8);
            \draw[green, thick] (8) -- (9);
            \draw[black, thick] (9) -- (1);
        \end{tikzpicture}
        \caption{$\cg_2$: Colored odd-cycle counterexample}\label{figure:odd cycle counterexample}
    \end{minipage}
\end{figure}
\end{example}


In the above example for $\cg_2$, the paths $\{4-5-6-7-8-9\}$ (corresponding to $\det(K_{1\overset{c}\leftrightarrow 3 \setminus \{1,3\}})$) and $\{1-2-3-4-5-6\}$ (corresponding to $\det(K_{7\overset{c}\leftrightarrow 9 \setminus \{7,9\}})$) are equal to each other. At the same time, the unique black edge $\{9,1\}$ is not included in any of the path determinants present in the image of $\sigma_{13}$ and $\sigma_{79}$ , i.e., $\det(K_{1\leftrightarrow 3 \setminus \{1,3\}}), \det(K_{1\overset{c}\leftrightarrow 3 \setminus \{1,3\}}), \det(K_{7\leftrightarrow 9 \setminus \{7,9\}})$ and $\det(K_{7\overset{c}\leftrightarrow 9 \setminus \{7,9\}})$. This gives us an important insight that the conjecture can also be violated without using the color configurations obtained from Theorem \ref{theorem:pathDetTypeEven} and \ref{theorem:PathDetTypeOdd}. 

We have now shown that we can construct counterexamples to Conjecture \ref{Conj:ifandonlyif} on even and odd cycles. 
This clearly implies that the current version of the conjecture is not true. We thus explore the possible strengthening of conditions in the conjecture and check whether they hold or not. As we have seen in Theorem \ref{Theo:UniformColoring} that the statement of the conjecture does hold for uniform coloring on $n$-cycles, a natural strengthening of the conditions would be to check if the conjecture holds for colored cycles with uniform vertex coloring or uniform edge coloring. 

\begin{definition}
Let $\cg$ be a colored graph. Then \textit{uniform vertex coloring} is any graph coloring of $\cg$ where all the vertices in $\cg$ are colored the same. Similarly, \textit{uniform edge coloring} is any graph coloring where all the edges in $\cg$ are colored the same. A graph is said to have a uniform coloring if it satisfies both uniform vertex coloring and uniform edge coloring.
\end{definition}

Observe that if we include the uniform vertex coloring or uniform edge coloring constraint in the coloring conditions obtained in Theorem \ref{theorem:pathDetTypeEven}
and \ref{theorem:PathDetTypeOdd}, then the resulting colored paths $P$ and $Q$ either become identical or reflection of each other. Thus, we cannot construct counterexamples for uniform vertex colored or uniform edge colored cycles in the similar way as seen in the previous two examples. By doing a computational study, we obtained the following counterexamples of even and odd cycles with uniform vertex coloring.

\begin{example}
\label{CounterexampleConjuniVertex6}
Let $\cg_1$ be the colored $6$-cycle as shown in Figure \ref{figure: uniform vertex even cycle}.
Computing the vanishing ideal gives us that the linear binomial $\sigma_{35}-\sigma_{26}$ lies in $I_{\cg_1}$. However, computations also confirm that there are no graph symmetries in $\cg_1$. This is because the only potential symmetry in $\cg_1$ would require a reflection along the symmetry axis passing through the edges $\{2,3\}$ and $\{5,6\}$, due to the existence of the unique blue edge. However, this is not a valid symmetry since the reflection maps the edge $\{1,6\}$ to $\{4,5\}$, which differ in color.

Similarly, let \(\cg_2\) be the colored $9$-cycle as shown in Figure \ref{figure: uniform vertex odd cycle}. A similar computation gives us that the binomial $\sigma_{13}-\sigma_{79}$ lies in $I_{\cg_2}$ even though $\cg_2$ does not have any graph symmetry. The latter can also be verified by the existence of the unique black edge $\{1,9\}$.
\begin{figure}[H]
\begin{minipage}{0.45\textwidth}
    \centering
    \begin{tikzpicture}[scale=0.65, every node/.style={circle, draw, inner sep=1pt, minimum size=4mm, font=\small}]
        \node[fill=red, label=above:{1}] (1) at (90:2) {};
        \node[fill=red, label=right:{2}] (2) at (30:2) {};
        \node[fill=red, label=below:{3}] (3) at (330:2) {};
        \node[fill=red, label=below:{4}] (4) at (270:2) {};
        \node[fill=red, label=left:{5}] (5) at (210:2) {};
        \node[fill=red, label=above left:{6}] (6) at (150:2) {};
    
        \draw[green, thick] (1) -- (2);
        \draw[green, thick] (2) -- (3);
        \draw[brown, thick] (3) -- (4);
        \draw[green, thick] (4) -- (5);
        \draw[blue, thick] (5) -- (6);
        \draw[brown, thick] (6) -- (1);
    \end{tikzpicture}
    \caption{$\cg_1$: Uniform vertex colored even cycle}\label{figure: uniform vertex even cycle}
    \end{minipage}%
    \begin{minipage}{0.45\textwidth}
    \centering
    \begin{tikzpicture}[scale=0.9, every node/.style={circle, draw, inner sep=1pt, minimum size=4mm, font=\small}]

    \node[fill=red, label=above:{1}] (1) at (90:2.2) {};
    \node[fill=red, label=above right:{2}] (2) at (50:2.2) {};
    \node[fill=red, label=right:{3}] (3) at (10:2.2) {};
    \node[fill=red, label=below right:{4}] (4) at (-30:2.2) {};
    \node[fill=red, label=below:{5}] (5) at (-70:2.2) {};
    \node[fill=red, label=below left:{6}] (6) at (-110:2.2) {};
    \node[fill=red, label=left:{7}] (7) at (-150:2.2) {};
    \node[fill=red, label=above left:{8}] (8) at (-190:2.2) {};
    \node[fill=red, label=above:{9}] (9) at (-230:2.2) {};

    \draw[blue, thick] (1) -- (2);
    \draw[green, thick] (2) -- (3);
    \draw[brown, thick] (3) -- (4);
    \draw[blue, thick] (4) -- (5);
    \draw[green, thick] (5) -- (6);
    \draw[brown, thick] (6) -- (7);
    \draw[blue, thick] (7) -- (8);
    \draw[green, thick] (8) -- (9);
    \draw[Black, thick] (9) -- (1);

    \end{tikzpicture}
    \caption{$\cg_2$: Uniform vertex colored odd cycle}\label{figure: uniform vertex odd cycle}
\end{minipage}
\end{figure}
\end{example}



We now extend our computational study to uniform edge coloring. We obtain the following counterexample which is an even cycle with uniform edge coloring.

\begin{example}\label{example:CounterexampleConjEdges}
Let $\cg$ be the colored $6$-cycle as shown in Figure \ref{figure:uniform edge even cycle}. Here, the computation gives us that the linear binomial $\sigma_{15}-\sigma_{24}$ lies in $I_\cg$ and that there are no graph symmetries in $\cg$. This is evident from the fact that there are exactly two red vertices in $\cg$ ($1$ and $2$) but they are adjacent to vertices of different colors ($6$ and $3$).
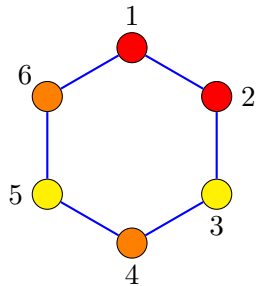
\begin{figure}[H]
    \centering
    \begin{tikzpicture}[scale=0.65, every node/.style={circle, draw, inner sep=1pt, minimum size=4mm, font=\small}]
        \node[fill=red, label=above:{1}] (1) at (90:2) {};
        \node[fill=red, label=right:{2}] (2) at (30:2) {};
        \node[fill=yellow, label=below:{3}] (3) at (330:2) {};
        \node[fill=orange, label=below:{4}] (4) at (270:2) {};
        \node[fill=yellow, label=left:{5}] (5) at (210:2) {};
        \node[fill=orange, label=above left:{6}] (6) at (150:2) {};
    
        \draw[blue, thick] (1) -- (2);
        \draw[blue, thick] (2) -- (3);
        \draw[blue, thick] (3) -- (4);
        \draw[blue, thick] (4) -- (5);
        \draw[blue, thick] (5) -- (6);
        \draw[blue, thick] (6) -- (1);
    \end{tikzpicture}
    \caption{Uniform edge colored even cycle}\label{figure:uniform edge even cycle}
\end{figure}
\end{example}

Note: The above example is also the first example where $\sigma_{ij}-\sigma_{xy} \in I_\cg$ but the sets $\{\lambda(i),\lambda(j)\}$ and $\{\lambda(x),\lambda(y)\}$ are different. In particular, $\det(K_{1\leftrightarrow 5\setminus \{1,5\}}) \neq \det(K_{2\leftrightarrow 4\setminus \{2,4\}})$ and $\det(K_{1\overset{c}\leftrightarrow 5\setminus \{1,5\}}) \neq \det(K_{2\overset{c}\leftrightarrow 4\setminus \{2,4\}})$ even though $\sigma_{15}-\sigma_{24}\in I_\cg$. 

Although we obtained a counterexample which is an even cycle with uniform edge coloring, our computational study failed to obtain a similar counterexample which is an odd cycle with uniform edge coloring. Thus, in the following subsection, we prove the revised version of the original conjecture.

\subsection{The revised conjecture}
In this subsection, we provide a revised version of the conjecture, namely, for uniform edge colored odd cycles. We also state the necessary results that are needed to eventually prove the statement. The revised conjecture is now as follows:
\begin{theorem}[The revised conjecture]\label{theorem:revisedConjecture}
Let $\cg$ be any colored $n$-cycle with uniform edge coloring, where $n$ is odd. Then a linear binomial lies in $I_\cg$ if and only if there is a corresponding symmetry in $\cg$. 
\end{theorem}

In order to prove this theorem, we first look at some necessary results based on the parity of the cycle size and path length. Recall that in Example \ref{example:CounterexampleConjEdges}, we saw that $\sigma_{15}-\sigma_{24} \in I_\cg$ even though $\det(K_{1\leftrightarrow 5\setminus \{1,5\}}) \neq \det(K_{2\leftrightarrow 4\setminus \{2,4\}})$ and $\det(K_{1\overset{c}\leftrightarrow 5\setminus \{1,5\}}) \neq \det(K_{2\overset{c}\leftrightarrow 4\setminus \{2,4\}})$. This can happen when there are some terms that get canceled between $k_{12}^2\det(K_{1\leftrightarrow 5\setminus \{1,5\}})$ and $k_{12}^2\det(K_{1\overset{c}\leftrightarrow 5\setminus \{1,5\}})$ and similarly, $k_{12}^2\det(K_{2\leftrightarrow 4\setminus \{2,4\}})$ and  $k_{12}^2\det(K_{2\overset{c}\leftrightarrow 4\setminus \{2,4\}})$. In particular, we have 
\begin{align*}
        \sigma_{15} 
        &= \frac{1}{\operatorname{det}(K)} ( k_{12}^2 ( k_{11} k_{33} k_{44} - k_{12}^2 k_{44} - k_{12}^2 k_{11}) + k_{12}^4 k_{44}), \ \text{and}\\
        \sigma_{24} 
        &= \frac{1}{\operatorname{det}(K)} ( k_{12}^2 ( k_{11} k_{33} k_{44} - k_{12}^2 k_{11} - k_{12}^2 k_{33}) + k_{12}^4 k_{33}).
    \end{align*}
Here, the polynomial corresponding to the shorter paths are given by \( k_{12}^2 ( k_{11} k_{33} k_{44} - k_{12}^2 k_{44} - k_{12}^2 k_{11}) \) and \( k_{12}^2 ( k_{11} k_{33} k_{44} - k_{12}^2 k_{11} - k_{12}^2 k_{33}) \), respectively, and the complementary paths are given by \(k_{12}^4 k_{44}\) and \(k_{12}^4 k_{33}\). It is clear that the term $k_{12}^4k_{44}$ gets canceled in $\sigma_{15}$, and similarly, $k_{12}^4k_{33}$ gets canceled in $\sigma_{24}$. 

In general, the cancellation of terms within the image of $\sigma_{ij}$ poses a significant challenge to analyze the linear binomials. However, this phenomenon cannot happen in the image of $\sigma_{ij}$ in an odd cycle, which we prove using the following lemma:

\begin{lemma}
\label{lem:VertexCoeffOddEvenPath}
Let $P$ be a path graph with $m$ vertices.
\begin{enumerate}
    \item If $m$ is odd, then every monomial in the expansion of \(\operatorname{det}(K_P)\) has an odd vertex degree, i.e., contains an odd number of diagonal entries.
    \item Similarly, if $m$ is even, then every monomial in the expansion of \(\operatorname{det}(K_P)\) has an even vertex degree.
\end{enumerate}
\end{lemma}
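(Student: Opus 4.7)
The plan is to derive the lemma as an immediate consequence of Lemma~\ref{lem:tridiagonal det}, which already expresses $\det(K_P)$ as an explicit sum indexed by disjoint edge subsets. First I would recall that, by that lemma, every monomial appearing in the expansion of $\det(K_P)$ has the form
\[
(-1)^{|S|} \prod_{\{i,j\} \in S} k_{ij}^2 \prod_{v \in V_P \setminus V(S)} k_{vv}
\]
for some disjoint edge set $S \subseteq E_P$. The vertex degree of such a monomial is exactly $|V_P \setminus V(S)|$.

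Next I would use the disjointness of $S$ to count $|V(S)|$ precisely. Since no two edges in $S$ share a vertex, each edge of $S$ contributes two distinct vertices to $V(S)$, so $|V(S)| = 2|S|$. Consequently, the vertex degree of every monomial is $m - 2|S|$, which has the same parity as $m$, regardless of $|S|$.

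Both parts of the lemma then follow at once: if $m$ is odd, then $m - 2|S|$ is odd for every admissible $S$, so every monomial has odd vertex degree; if $m$ is even, then $m - 2|S|$ is even for every admissible $S$, so every monomial has even vertex degree. The only subtlety is to make sure that genuinely every monomial in the expanded determinant is captured by Lemma~\ref{lem:tridiagonal det} (with no further cancellation to worry about), which is true because the $k_{vv}$ and $k_{ij}$ are algebraically independent generators of the coordinate ring, so distinct choices of $S$ produce distinct monomials. There is no real obstacle to overcome here; the statement is a direct parity corollary of the tridiagonal expansion.
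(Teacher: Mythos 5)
Your proof is correct and is essentially the paper's argument: both reduce to the observation that every contributing monomial corresponds to a disjoint set of edges (a matching) covering $2|S|$ vertices, so its vertex degree is $m-2|S|$ and hence has the parity of $m$. The only cosmetic difference is that you cite the expansion from Lemma~\ref{lem:tridiagonal det} directly, whereas the paper re-derives the same matching structure from the Leibniz formula; your route is slightly more economical and equally valid.
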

\begin{proof}
\textbf{1)} We apply the Leibniz formula to calculate the determinant of $K_P = (p_{ij})$. As $K_P$ is tridiagonal, any permutation $\tau$ that maps $i$ to an index not adjacent to $i$ results in a product term that is zero. Consequently, the only permutations that contribute to the determinant are either the identity or disjoint compositions of $2$-cycles, which permute adjacent indices. Let $\mathcal{T} \subseteq \mathbb{S}^m$ denote the set of such permutations. Thus, the Leibniz formula can be simplified to:
    \[
        \operatorname{det}(K_P)  
        = \sum_{\tau \in \mathcal{T}} \operatorname{sgn}(\tau) \prod_{\substack{i=1, \\ \tau(i) \neq i}}^m p_{i \tau(i)} \prod_{\substack{i=1, \\ \tau(i) = i}}^m p_{i \tau(i)}.
    \]
    Let $t_{\tau} \in \{0,1, \dots, \big\lfloor\frac{m}{2}\big \rfloor\}$ denote the number of 2-cycles within the permutation $\tau \in \mathcal{T}$. Each $\tau$ permutes an even amount of $2t_{\tau}$ off-diagonal entries and thus fixes $m-2t_{\tau}$ diagonal entries, which correspond to vertex coefficients. Since $m$ is odd and $2t_{\tau}$ is always even, the quantity $m-2t_{\tau}$ is odd. Hence, every nonzero monomial in the determinant contains an odd number of vertex coefficients.

\noindent \textbf{2)} An analogous argument applies in the even case.
\end{proof}

\begin{proposition}
\label{proposition:NoCancelInOddCycle}
Let \(\cg\) be a colored cycle of odd size \(n\). Then a linear binomial $\sigma_{ij}-\sigma_{xy}$ lies in the vanishing ideal if and only if the following conditions hold:
\begin{enumerate}
    \item $\det(K_{\setminus i\leftrightarrow j})=\det(K_{\setminus x\leftrightarrow y})$,
    \item $\det(K_{\setminus i\overset{c}\leftrightarrow j})=\det(K_{\setminus x\overset{c}\leftrightarrow y})$, and
    \item the multiset of edge colors in $i \leftrightarrow j$ and $x \leftrightarrow y$ and similarly $i \overset{c}\leftrightarrow j$ and $x \overset{c}\leftrightarrow y$ are equal.
\end{enumerate}
\end{proposition}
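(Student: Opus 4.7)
The plan is to handle the two directions of the equivalence separately. The sufficient direction is essentially immediate from Corollary~\ref{Cor:J&WforCycle}: if the edge-color multisets on $i \leftrightarrow j$ and $x \leftrightarrow y$ agree, so do the path lengths $n_{i\leftrightarrow j}$ and $n_{x\leftrightarrow y}$ (hence the signs), and the products $\prod k_e$ of edge coefficients along the two shorter paths become the same monomial; together with the two cofactor determinants being equal, this makes $\sigma_{ij}$ and $\sigma_{xy}$ literally equal in the coordinate ring. The real content lies in the necessary direction, which I now outline.

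After clearing denominators through Corollary~\ref{Cor:J&WforCycle}, the hypothesis $\sigma_{ij} - \sigma_{xy} \in I_{\cg}$ becomes the polynomial identity
\[
A_{ij} + B_{ij} \;=\; A_{xy} + B_{xy},
\]
where $A_{\cdot}$ and $B_{\cdot}$ denote the shorter- and complementary-path summands in Corollary~\ref{Cor:J&WforCycle}. The central step will be a parity argument. By Lemma~\ref{lem:tridiagonal det}, every monomial in the expansion of the determinant of a colored path concentration matrix carries an \emph{even} number of off-diagonal factors. Hence every monomial of $A_{ij}$ has total edge degree $\equiv \ell_s^{ij} \pmod 2$ and every monomial of $B_{ij}$ has total edge degree $\equiv \ell_c^{ij} \pmod 2$, where $\ell_s^{ij},\ell_c^{ij}$ are the edge-lengths of the two paths between $i$ and $j$. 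Since $n = \ell_s^{ij} + \ell_c^{ij}$ is odd, these two parities differ, so no cancellation is possible between $A_{ij}$ and $B_{ij}$; the same split applies to $\sigma_{xy}$.

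Splitting the polynomial identity by edge-degree parity then leaves only two possible pairings: the \emph{aligned} one $\{A_{ij} = A_{xy},\, B_{ij} = B_{xy}\}$ or the \emph{crossed} one $\{A_{ij} = B_{xy},\, B_{ij} = A_{xy}\}$. I would rule out the crossed pairing by matching minimum edge degrees on each side: it would force $\ell_s^{ij} = \ell_c^{xy}$ and $\ell_c^{ij} = \ell_s^{xy}$, which combined with $\ell_s \leq \ell_c$ on both sides yields $\ell_s^{ij} = \ell_c^{ij}$, contradicting the strict inequality $\ell_s < \ell_c$ that always holds in an odd cycle. Hence $A_{ij} = A_{xy}$ and $B_{ij} = B_{xy}$ as polynomials, and the shorter and complementary path-lengths of $(i,j)$ and $(x,y)$ must agree.

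The three stated conditions then fall out by comparing the unique monomials of minimum edge degree on each side. Within $A_{ij} = A_{xy}$, this minimum monomial is, up to sign, $\prod_{e \in i\leftrightarrow j} k_e \cdot \prod_{v \notin V(i\leftrightarrow j)} k_{vv}$, so matching forces equality of the edge-color multisets on the two shorter paths; dividing the polynomial identity by the common edge product then yields $\det(K_{\setminus i\leftrightarrow j}) = \det(K_{\setminus x\leftrightarrow y})$, and the analogous analysis on $B_{ij} = B_{xy}$ gives the complementary-path equalities. The principal obstacle throughout is this parity separation, where both the tridiagonal structure of the path concentration matrix (Lemma~\ref{lem:tridiagonal det}) and the oddness of $n$ are essential; once those are in place, the remaining extraction is routine monomial bookkeeping.
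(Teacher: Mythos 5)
Your proposal is correct and takes essentially the same route as the paper: both hinge on the observation that in an odd cycle the shorter-path and complementary-path contributions to $\sigma_{ij}$ cannot cancel for parity reasons, the only difference being that you track edge-degree parity via Lemma~\ref{lem:tridiagonal det} while the paper tracks vertex-degree parity via Lemma~\ref{lem:VertexCoeffOddEvenPath} --- equivalent statements, since every monomial in either contribution has total degree $n-1$. Your explicit elimination of the ``crossed'' pairing $A_{ij}=B_{xy}$, $B_{ij}=A_{xy}$ and the extraction of the three conditions from the minimal-edge-degree monomials are details the paper's proof leaves implicit, but they do not change the underlying argument.
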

\begin{proof}
As the sufficient part is trivial, we focus on proving the necessary part of the proposition. Let \( \sigma_{ij} - \sigma_{xy}\) be a linear binomial that lies in $I_\cg$. 
As \(\cg\) is an odd cycle, the length of the shorter and complementary paths between any two vertices always have the opposite parity.
Without loss of generality, suppose the shorter paths \( i \leftrightarrow j\) and \(x \leftrightarrow y\) have even length. Then their corresponding complementary paths \( i \overset{c}{\leftrightarrow} j \) and \( x \overset{c}{\leftrightarrow} y \) must have odd length. Thus, \( K_{\setminus i \leftrightarrow j} \) and \( K_{\setminus x \leftrightarrow y} \) are concentration matrices of paths of odd length, while \( K_{\setminus i \overset{c}{\leftrightarrow} j} \) and \( K_{\setminus x \overset{c}{\leftrightarrow} y} \) are concentration matrices of paths of even length.
By Lemma~\ref{lem:VertexCoeffOddEvenPath}, we know that the determinants of the odd-length path matrices consist only of monomials with an odd number of vertex coefficients, whereas the determinants of the even-length path matrices consist only of monomials with an even number of vertex coefficients. Therefore, no monomial arising from a shorter path can cancel with any monomial arising from a complementary path, since they differ in the parity of the number of vertex coefficients. Hence, we can conclude that $\sigma_{ij}-\sigma_{xy}$ lies in $I_\cg$ only if their corresponding shorter path and complementary path polynomials match, respectively.
\end{proof}

As the proposition gives us a cancellation-free condition to obtain linear binomials in odd cycles, we now only need to analyze the determinants of uniform edge colored paths to prove Theorem \ref{theorem:revisedConjecture}. The following lemma gives us that no non-trivial path colorings can exist for paths $P$ and $Q$ to obtain the same determinant when they also satisfy uniform edge coloring.

\begin{lemma}\label{lemma:uniformEdgeNoNontrivial}
Let $P$ and $Q$ be two colored paths of length $m$ with uniform edge coloring. Then, $\det(K_P) = \det(K_Q)$ if and only if $P$ and $Q$ are either identical or reflection of each other.
\end{lemma}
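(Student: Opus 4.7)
The backward direction is immediate since identical paths give identical concentration matrices, and reflecting a path conjugates $K_P$ by the anti-diagonal permutation matrix, an invertible transformation preserving the determinant. I plan to prove the forward direction by strong induction on the path length $m$; the cases $m=1,2$ are already settled by Lemma~\ref{Lemma:Pathlength 1 and 2}, whose proof does not rely on uniform edge coloring.

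For the inductive step, let $k$ denote the common edge variable and write $a_i=k_{p_ip_i}$ and $b_i=k_{q_iq_i}$. Lemma~\ref{lem:tridiagonal det} expresses each determinant as a polynomial in $k$ whose coefficients are signed sums over matchings of the underlying path, so the hypothesis $\det(K_P)=\det(K_Q)$ decomposes into coefficient-wise identities $\sigma_s(P)=\sigma_s(Q)$ for every $s$, where $\sigma_s(P)=\sum_{|S|=s}\prod_{i\notin V(S)}a_i$ and analogously for $Q$. The identity at $s=0$ immediately yields $\prod a_i=\prod b_i$, so the two paths use the same multiset of vertex colors. The next step is to differentiate with respect to a color variable $c$ that appears in both paths. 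Since removing the $i$-th row and column of a tridiagonal matrix leaves a block-diagonal tridiagonal matrix whose blocks are themselves tridiagonal, one obtains
\[
\frac{\partial\det(K_P)}{\partial c}\;=\;\sum_{i:\,a_i=c}D_{i-1}(a_1,\dots,a_{i-1})\cdot D_{m-i}(a_{i+1},\dots,a_m),
\]
where $D_n$ denotes the tridiagonal path determinant on $n$ vertices, and the analogous expression holds for $Q$.

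The cleanest case is when some color $c$ appears exactly once in each path: both partial derivatives collapse to single products of shorter-path determinants. Each $D_n$ is irreducible in $\mathbb{Q}[a_1,\dots,a_n,k]$, which can be verified by an auxiliary induction on $n$ using the recursion $D_n=a_nD_{n-1}-k^2D_{n-2}$ and the multilinearity of $D_n$ in the $a_i$'s. Unique factorization in the polynomial ring therefore matches the shorter factors on the two sides up to sign, and the inductive hypothesis then determines each of the two sub-paths up to reversal. A short case analysis on whether $c$ sits at the same or mirrored position in $P$ and $Q$, combined with the full determinant equality to rule out inconsistent mixed configurations (for example, a reflected left sub-path paired with an unreflected right sub-path), forces $P=Q$ or $P=Q^R$.

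The main obstacle is the case in which every color appears at least twice in both $P$ and $Q$, so the partial derivative becomes a genuine sum of products and unique factorization no longer closes the argument. To handle this I plan to specialize all color variables except $c$ to $1$ in the identities $\sigma_s(P)=\sigma_s(Q)$, converting each into a univariate polynomial identity in $c$ whose coefficients count, for every pair $(s,j)$, the number of valid size-$(m-2s)$ uncovered sets $T$ meeting the color-$c$ positions of $P$ in exactly $j$ vertices. Matching these counts across all triples $(s,c,j)$ yields a rich combinatorial constraint; I expect that a secondary induction on the total color excess $\sum_c(n_c-1)$, where $n_c$ counts the occurrences of color $c$ in $P$, together with a local adjacency-swap argument that strictly decreases this excess, will reduce the problem back to the unique-color case handled above and complete the proof.
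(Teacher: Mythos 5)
Your backward direction and the general setup are fine, but the forward direction has a genuine gap that is not a matter of polish: the case in which every vertex color occurs at least twice in both paths is not proved at all. Your final paragraph only describes a plan (``I expect that a secondary induction on the total color excess \ldots will reduce the problem back to the unique-color case''), and this is precisely the hard case of the lemma. Note that the uniform-edge-coloring hypothesis must enter essentially somewhere: Theorems~\ref{theorem:pathDetTypeEven} and~\ref{theorem:PathDetTypeOdd} exhibit non-identical, non-reflected colored paths with equal determinants once the edge colors are allowed to vary, and in those examples vertex colors do repeat. Nothing in your counting of uncovered sets meeting the color-$c$ positions shows that these constraints, under uniform edge coloring, force identity or reflection; the proposed ``adjacency-swap'' that decreases the color excess is not defined and there is no argument that it preserves the determinant identities. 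Separately, even your ``cleanest case'' is not airtight: the irreducibility of $D_n$ holds when $a_1,\dots,a_n,k$ are algebraically independent, but after substituting the actual (possibly repeated) color variables of a subpath the specialized determinant can factor --- for instance $D_2(c,c)=c^2-k^2=(c-k)(c+k)$ --- so the unique-factorization matching of the two products of subpath determinants is not justified as stated.

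For comparison, the paper's proof avoids all of this with a much more elementary device: writing $e$ for the common edge color and $P_i$, $Q_i$ for the prefixes on the first $i$ vertices, it uses the tridiagonal recurrence $\det(K_{P_i})=p_i\det(K_{P_{i-1}})-e^2\det(K_{P_{i-2}})$, takes $j$ to be the first index at which the vertex color sequences of $P$ and $Q$ disagree (assuming they are neither identical nor reflections), and shows $\det(K_{P_j})-\det(K_{Q_j})=(p_j-q_j)\det(K_{P_{j-1}})\neq 0$, then argues that the two sequences of prefix determinants can never re-coincide at positions $j+1$, $j+2$, and beyond. The uniform edge coloring is what makes the $e^2$ terms cancel in the first difference, which is exactly the leverage your approach never exploits. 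If you want to salvage your route, you would need an actual proof of the repeated-color case; otherwise I would recommend switching to the recurrence argument.
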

\begin{proof}
Let the sequence of vertex colors of $P$ and $Q$ be $(p_1,p_2, \ldots, p_m)$ and $(q_1,q_2,\ldots, q_m)$, respectively. We denote the single edge color in $P$ and $Q$ by $e$. Now, let $P_i$ and $Q_i$ be the subpaths of $P$ and $Q$ obtained by taking the first $i$ vertices of $P$ and $Q$, respectively. By using the tridiagonal structure of the concentration matrix, the recurrence formula for the determinants can be written as
\[
\det(K_{P_i})= p_i \det(K_{P_{i-1}}) - e^2 \det(K_{P_{i-2}}) \text{ and } \det(K_{Q_i})= q_i \det(K_{Q_{i-1}}) - e^2 \det(K_{Q_{i-2}}), 
\]
with $\det(K_{P_1})=p_1$ and $\det(K_{Q_1})=q_1$.

Our goal now is to show that if the sequence $(p_1,p_2,\ldots,p_m)$ and $(q_1,q_2,\ldots, q_m)$ are not identical or reflection of each other, then $\det(K_P)$ is not equal to $\det(K_Q)$. So, let us assume that the two sequences differ for the first time at the $j$th position, i.e., $p_j\neq q_j$ (and also not equal to $q_{m-j+1}$) but $p_i=q_i$ for all $i <j$. This gives us that $\det(K_{P_i})=\det(K_{Q_i})$ for all $i<j$, and
\begin{eqnarray*}
&&\det(K_{P_j}) -\det(K_{Q_j})  \\
&&= p_j \det(K_{P_{j-1}}) - e^2 \det(K_{P_{j-2}}) - (q_j \det(K_{Q_{j-1}}) - e^2 \det(K_{Q_{j-2}})) \\
&&= (p_j-q_j)\det(K_{P_{j-1}}) \ \ (\text{As }\det(K_{P_{j-1}})=\det(K_{Q_{j-1}}) \text{ and } \det(K_{P_{j-2}})=\det(K_{P_{j-2}})) \\
&& \neq 0 \ \ (\text{As } p_j\neq q_j).
\end{eqnarray*}
So, considering $(\det(K_{P_1}), \det(K_{P_2}), \ldots, \det(K_{P_m}))$ and $(\det(K_{Q_1}), \det(K_{Q_2}), \ldots, \det(K_{Q_m}))$ as two sequences, the $j$th term becomes the first term where they differ. We claim that the entries of the two sequences are never equal for any $i\geq j$, which would eventually imply that $\det(K_P)\neq \det(K_Q)$. To show this, we compare the $j+1$th and $j+2$th terms of the two sequences. The difference of the $j+1$th terms is given by
\begin{eqnarray*}
&&\det(K_{P_{j+1}}) -\det(K_{Q_{j+1}})  \\
&&= p_{j+1} \det(K_{P_{j}}) - e^2 \det(K_{P_{j-1}}) - (q_{j+1} \det(K_{Q_{j}}) - e^2 \det(K_{Q_{j-1}})) \\
&&= p_{j+1}\det(K_{P_{j}}) - q_{j+1}\det(K_{Q_j}) \ \ (\text{As }\det(K_{P_{j-1}})=\det(K_{Q_{j-1}}) ).
\end{eqnarray*}
This gives us that regardless of whether $p_{j+1}$ is equal to $q_{j+1}$ or not, $\det(K_{P_{j+1}}) \neq \det(K_{Q_{j+1}})$. Similarly, we have
\begin{eqnarray*}
\det(K_{P_{j+2}}) &=& p_{j+2} \det(K_{P_{j+1}}) - e^2 \det(K_{P_{j}}) \\
&=& (p_{j+2}p_{j+1}p_j -p_{j+2}e^2-p_je^2)\det(K_{P_{j-1}}) - e^2(p_{j+2}p_{j+1} +1)\det(K_{P_{j-2}}), \text{ and} \\
\det(K_{Q_{j+2}})&=& (q_{j+2}q_{j+1}q_j -q_{j+2}e^2-q_je^2)\det(K_{Q_{j-1}}) - e^2(q_{j+2}q_{j+1} +1)\det(K_{Q_{j-2}}).
\end{eqnarray*}
This implies that $\det(K_{P_{j+2}})$ and $\det(K_{Q_{j+2}})$ can be equal only when the corresponding coefficients of $\det(K_{P_{j-1}})$ and $\det(K_{Q_{j-1}})$ are the same, as well as the ones of $\det(K_{P_{j-2}})$ and $\det(K_{Q_{j-2}})$. The only way that the coefficients can be equal is if the following equalities hold:
\[
p_{j}=q_{j+2}, p_{j+2}= q_{j}, p_{j+1}= q_{j+1}, \text{ and } q_j=q_{j+2}.
\]
However, the last equality gives us a contradiction as we had already assumed that $p_j\neq q_j$. This gives us that $\det(K_{P_{j+2}})\neq \det(K_{Q_{j+2}})$. Using a similar analysis and an inductive argument on the position of $j$, we can conclude that $\det(K_P)$ and $\det(K_Q)$ are not equal.
\end{proof}

We are now ready to prove the revised Conjecture \ref{theorem:revisedConjecture}.

\begin{proof}[Proof of Theorem \ref{theorem:revisedConjecture}]
Let $\cg$ be a colored cycle of odd length $n$ with uniform edge coloring. As the if-part of the statement follows from Proposition \ref{prop:symmetries}, we focus on proving the only if-part of the statement. Specifically, if $\sigma_{ij} - \sigma_{xy}$ lies in $I_{\cg}$, then we need to show that there exists a corresponding graph symmetry in $\cg$.
We use the indexing scheme illustrated in Figure~\ref{fig:IndexingUncoloredOddUni}. Note that the indices $i_l$ correspond to the vertices that are adjacent to $i,j, x$ and $y$.

Due to the uniform edge coloring of $\cg$,  we can write the covariance entries as follows by using Corollary~\ref{Cor:J&WforCycle}:
\begin{align*}
    &\sigma_{ij} = \frac{1}{\operatorname{det}(K_{\cg})} 
    \big( (-1)^{n_{i \leftrightarrow j}\;+\;1}  \ \ k_{12}^{n_{i \leftrightarrow j}\;-\;1} \ \ \operatorname{det}\big(K_{i_1 i_2 \dots i_6}\big) \
    + \ (-1)^{n_{i \overset{c}{\leftrightarrow} j}\;+\;1} \ \ k_{12}^{n_{i \overset{c}{\leftrightarrow} j}\;-\;1} \ \ \operatorname{det}\big(K_{i_7 i_8}\big)\big),\\
    &\sigma_{xy} = \frac{1}{\operatorname{det}(K_{\cg})} 
    \big( (-1)^{n_{x \leftrightarrow y}\;+\;1}  \ \ k_{12}^{n_{x \leftrightarrow y}\;-\;1} \ \ \operatorname{det}\big(K_{i_5 i_6 \dots i_2}\big) \
    + \ (-1)^{n_{x \overset{c}{\leftrightarrow} y}\;+\;1} \ \ k_{12}^{n_{x \overset{c}{\leftrightarrow} y}\;-\;1} \ \ \operatorname{det}\big(K_{i_3 i_4}\big)\big).
\end{align*}
Since $\cg$ is an odd cycle, by Proposition~\ref{proposition:NoCancelInOddCycle} we know that 
$\sigma_{ij}-\sigma_{xy}$ lies in $I_\cg$ only if 
\[
\det\big(K_{i_1 i_2 \dots i_6}\big) = \det\big(K_{i_5 i_6 \dots i_2}\big), \text{ and }  \det\big(K_{i_7 i_8}\big) = \det\big(K_{i_3 i_4}\big).
\]
However, every path within $\cg$ also has a uniform edge coloring. Thus, by Lemma ~\ref{lemma:uniformEdgeNoNontrivial}, we know that such determinant equalities hold only when the respective paths are either identical or reflection of each other.  
This leads to two possible cases: the two paths are identical to each other or are reflections of each other. 
This is because if the path \(\{i_1 - i_2 - \dots - i_6\}\) is identical to \(\{i_5 - i_6 - \dots - i_2\}\) and \(\{i_7 - i_8\}\) is a reflection of \(\{i_3 - i_4\}\), then \(\{i_7 - i_8\}\) also becomes identical to \(\{i_3 - i_4\}\) due to the fact that \(\{i_7 - i_8\}\) and \(\{i_3 - i_4\}\) are contained in \(\{i_5 - i_6 - \dots - i_2\}\) and \(\{i_1 - i_2 - \dots - i_6\}\), respectively.

\noindent \textbf{Case I}: The paths \(\{i_1 - i_2 - \dots - i_6\}\) and \(\{i_7 - i_8\}\) are reflections  of \(\{i_5 - i_6 - \dots - i_2\}\) and \(\{i_3 - i_4\}\), respectively:

The relevant paths are given by:
\begin{figure}[H]
    \centering
    \begin{minipage}{0.45\textwidth}
        \centering
        \begin{tikzpicture}[scale=1.2]

            \def\start{0}
            \def\bis{2}
            \def\small{0.5}

            \draw[thick] (\start, 0) -- ++(\start+2*\bis+6*\small, 0);

            \node[circle, fill=black, inner sep=1.5pt, label=below:{\(i_1\)}] at (\start, 0) {};
            \node[circle, fill=black, inner sep=1.5pt, label=below:{\(i_2\)}] at (\start+\bis, 0) {};
            \node[circle, fill=black, inner sep=1.5pt, label=below:{\(x\)}] at (\start+\bis+\small, 0) {};
            \node[circle, fill=black, inner sep=1.5pt, label=below:{\(i_3\)}] at (\start+\bis+2*\small, 0) {};
            \node[circle, fill=black, inner sep=1.5pt, label=below:{\(i_4\)}] at (\start+\bis+4*\small, 0) {};
            \node[circle, fill=black, inner sep=1.5pt, label=below:{\(y\)}] at (\start+\bis+5*\small, 0) {};
            \node[circle, fill=black, inner sep=1.5pt, label=below:{\(i_5\)}] at (\start+\bis+6*\small, 0) {};
            \node[circle, fill=black, inner sep=1.5pt, label=below:{\(i_6\)}] at (\start+2*\bis+6*\small, 0) {};

            \draw[thick] (\start, -0.8) -- ++(\start+2*\bis+6*\small, 0);

            \node[circle, fill=black, inner sep=1.5pt, label=below:{\(i_5\)}] at (\start, -0.8) {};
            \node[circle, fill=black, inner sep=1.5pt, label=below:{\(i_6\)}] at (\start+\bis, -0.8) {};
            \node[circle, fill=black, inner sep=1.5pt, label=below:{\(j\)}] at (\start+\bis+\small, -0.8) {};
            \node[circle, fill=black, inner sep=1.5pt, label=below:{\(i_7\)}] at (\start+\bis+2*\small, -0.8) {};
            \node[circle, fill=black, inner sep=1.5pt, label=below:{\(i_8\)}] at (\start+\bis+4*\small, -0.8) {};
            \node[circle, fill=black, inner sep=1.5pt, label=below:{\(i\)}] at (\start+\bis+5*\small, -0.8) {};
            \node[circle, fill=black, inner sep=1.5pt, label=below:{\(i_1\)}] at (\start+\bis+6*\small, -0.8) {};
            \node[circle, fill=black, inner sep=1.5pt, label=below:{\(i_2\)}] at (\start+2*\bis+6*\small, -0.8) {};

        \end{tikzpicture}
    \end{minipage}
    \hspace{2.75cm}
    \raisebox{0.5cm}{and} 
    \begin{minipage}{0.3\textwidth}
        \centering
        \begin{tikzpicture}[scale=1.2]

            \def\start{0}
            \def\small{0.5}

            \draw[thick] (\start, 0) -- ++(2*\small, 0);

            \node[circle, fill=black, inner sep=1.5pt, label=below:{\(i_7\)}] at (\start, 0) {};
            \node[circle, fill=black, inner sep=1.5pt, label=below:{\(i_8\)}] at (\start+2*\small, 0) {};

            \draw[thick] (\start, -0.8) -- ++(2*\small, 0);

            \node[circle, fill=black, inner sep=1.5pt, label=below:{\(i_3\)}] at (\start, -0.8) {};
            \node[circle, fill=black, inner sep=1.5pt, label=below:{\(i_4\)}] at (\start+2*\small, -0.8) {};

        \end{tikzpicture}
    \end{minipage}
\end{figure}
As the paths are reflection of each other, we get the following equalities of vertex colors:
\[
\lambda(i_1) = \lambda(i_2), \lambda(x)= \lambda(i), \lambda(i_3) = \lambda(i_8), \lambda(i_4) = \lambda(i_7), \lambda(y)= \lambda(j), \text{ and } \lambda(i_5)=\lambda(i_6).
\]
This configuration gives rise to the coloring pattern shown in Figure~\ref{fig:ColoredOddUni}. Using this color configuration, we can obtain the required graph symmetry (which is a reflection in this case) by the following analysis: As $\cg$ has odd length, one of the paths \(\{i_1 - i_2\}\) or \(\{i_5 - i_6\}\) is of odd length, while the other one is of even length. Thus, the graph symmetry corresponding to the linear binomial is the reflection along the axis that intersects the path of odd length at its middle vertex, and the path of even length at its middle edge. 

\noindent \textbf{Case II}: The paths \(\{i_1 - i_2 - \dots - i_6\}\) and \(\{i_7 - i_8\}\) are identical to \(\{i_5 - i_6 - \dots - i_2\}\) and \(\{i_3 - i_4\}\), respectively. This case follows using the similar argument as in Case I. 

As a similar argument would also follow for the linear binomials $\sigma_{ij}-\sigma_{xy}$ where $i \leftrightarrow j$ and $x\leftrightarrow y$ coincide or when $i=j$ and $x=y$, we can conclude that a linear binomial lies in $I_\cg$ for a uniform edge colored odd cycle if and only if there exists a corresponding symmetry in $\cg$.
\begin{figure}[h]
\begin{minipage}{0.45\textwidth}
\centering
\begin{tikzpicture}[scale=0.75]
    \def\r{2}

    \draw[thick, black] (0,0) circle (\r);

    \newcommand{\dotLabel}[3]{
        \node[circle, fill=black, inner sep=1.5pt, label=#3:{\(#2\)}] at ({#1}:{\r}) {};
    }
    
    \newcommand{\dotLabelShift}[5]{
    \node[
        circle, fill=black, inner sep=1.5pt, 
        label={[xshift=#3, yshift=#4]#2:{\(#1\)}}
        ] at (#2:\r) {};        
}

    \dotLabel{45}{i_2}{above right}
    \dotLabelShift{x}{30}{2pt}{-2pt}{black}
    \dotLabel{15}{i_3}{right}

    \dotLabel{135}{i_1}{above left}
    \dotLabelShift{i}{150}{-2pt}{-3pt}{black}
    \dotLabel{165}{i_8}{left}

    \dotLabel{225}{i_6}{below left}
    \dotLabelShift{j}{210}{-2pt}{2pt}{black}
    \dotLabel{195}{i_7}{left}

    \dotLabel{315}{i_5}{below right}
    \dotLabelShift{y}{330}{1pt}{3pt}{black}
    \dotLabel{345}{i_4}{right}
\end{tikzpicture}
\caption{}
\label{fig:IndexingUncoloredOddUni}
\end{minipage}%
\begin{minipage}{0.45\textwidth}
\centering
\begin{tikzpicture}[scale=0.75]
    \def\r{2}

    \draw[thick, black] (0,0) circle (\r);
    
    \draw[dashed, thick] (0, -\r - 0.3) -- (0, \r + 0.3);

    \newcommand{\dotLabel}[4]{
        \node[circle, fill=#4, inner sep=1.5pt, label=#3:{\(#2\)}] at ({#1}:{\r}) {};
    }

    \dotLabel{45}{i_2}{above right}{red}
    \dotLabel{30}{x}{above right}{yellow}
    \dotLabel{15}{i_3}{right}{purple}
    
    \dotLabel{135}{i_1}{above left}{red}
    \dotLabel{150}{i}{above left}{yellow}
    \dotLabel{165}{i_8}{left}{purple}
    
    \dotLabel{225}{i_6}{below left}{pink}
    \dotLabel{210}{j}{below left}{orange}
    \dotLabel{195}{i_7}{left}{magenta}
    
    \dotLabel{315}{i_5}{below right}{pink}
    \dotLabel{330}{y}{below right}{orange}
    \dotLabel{345}{i_4}{right}{magenta}
\end{tikzpicture}
\caption{}\label{fig:ColoredOddUni}
\end{minipage}
\end{figure}
\end{proof}

\section{Discussion and open problems}
\label{Sec:Discussion and open problems}
In Section \ref{section:counterexamples and strengthening} we constructed counterexamples to the general version of Conjecture \ref{Conj:ifandonlyif} by using the non-trivial path color configurations obtained in Section \ref{section:analysis of path graphs}. However, this also raises an interesting question of whether there are other non-trivial path color configurations that could give us the same determinant of the concentration matrices. Thus, we explore the other possible path color configurations in this section. To this end, we present some conditions that needs to be satisfied by any potential color configuration that could give us the same determinant. The overarching technique behind obtaining these conditions is based on comparing the monomials in the two determinants that have the same vertex and edge degree. Based on the conditions obtained, we conjecture that there are indeed no other non-trivial path color configurations apart from the ones stated in Theorem \ref{theorem:pathDetTypeEven} and \ref{theorem:PathDetTypeOdd}. We end the section by proposing the generalized version of the problem that could be of interest to the algebraic graph theory and combinatorics community, i.e., when can the determinant of the concentration matrices of two arbitrary colored graphs be equal. We provide an example of two such colored graphs that have the same determinant, which could be a starting point to further explore the problem.

\begin{lemma}\label{lemma:equal determinant conditions} 
Let \( P \) and \( Q \) be two colored paths with $\det(K_P)=\det(K_Q)$. Then the following conditions hold:
\begin{enumerate}
\item The multiset of vertex and edge colors used in \(P\) is the same as that in \( Q \).
\item The vertex colors adjacent to each edge color remain the same in both paths.
\end{enumerate}
\end{lemma}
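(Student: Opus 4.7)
My plan is to treat both $\det(K_P)$ and $\det(K_Q)$ as polynomials in the color variables $\{k_c\}$, using the explicit expansion from Lemma~\ref{lem:tridiagonal det}. Because vertex colors and edge colors are drawn from disjoint palettes in the definition of a colored graph, these variables are algebraically independent, and the equality $\det(K_P) = \det(K_Q)$ is an honest identity of polynomials in the $k_c$. The whole proof should then reduce to extracting the right coefficients on both sides.

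For part (1), I would first isolate the vertex multiset. In the formula
\[
\det(K_P) = \sum_{S \subseteq E_P \text{ disjoint}} (-1)^{|S|} \prod_{e \in S} k_{\lambda(e)}^2 \prod_{v \in V_P \setminus V(S)} k_{\lambda(v)},
\]
the unique summand containing no edge variable at all is the one with $S = \emptyset$, contributing $\prod_{v \in V_P} k_{\lambda(v)}$, since every other summand carries a factor $k_{\lambda(e)}^2$. Equating the edge-variable-free parts of $\det(K_P)$ and $\det(K_Q)$ forces $\prod_{v \in V_P} k_{\lambda(v)} = \prod_{v \in V_Q} k_{\lambda(v)}$, which is exactly the statement that the vertex color multisets agree. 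To obtain the edge multiset I would specialize every vertex variable to $1$; what survives is a polynomial in the edge variables whose degree-$2$ part (coming only from $|S|=1$) is $-\sum_{e \in E_P} k_{\lambda(e)}^2$, and comparison with the analogous expression for $Q$ gives that each edge color appears the same number of times in both paths.

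For part (2), I would fix an arbitrary edge color $c$ and specialize $k_{c'} = 0$ for every other edge color $c'$. This kills every summand whose support $S$ contains an edge of color different from $c$, so the coefficient of $k_c^2$ in the surviving polynomial, viewed as a polynomial in the vertex variables, equals
\[
-\sum_{e \in E_P,\ \lambda(e) = c} \ \prod_{v \in V_P \setminus V(e)} k_{\lambda(v)}.
\]
The analogous identity must hold with $P$ replaced by $Q$, and the same left-hand side. Using part (1) I know $M := \prod_{v \in V_P} k_{\lambda(v)} = \prod_{v \in V_Q} k_{\lambda(v)}$, so each term is of the form $M / (k_{\lambda(i)} k_{\lambda(j)})$ and uniquely determines the unordered pair $\{\lambda(i), \lambda(j)\}$ of endpoint colors. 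Comparing monomials therefore shows that the multiset of endpoint color pairs among the color-$c$ edges is the same in $P$ and $Q$, which is exactly condition~(2).

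The main point requiring care is the last deduction: that the monomial $M/(k_{\lambda(i)} k_{\lambda(j)})$ truly recovers the endpoint pair. This is where part~(1) is indispensable, because without the common total $M$ being fixed, distinct endpoint pairs could in principle produce the same complementary monomial. Once that is in hand, everything else is direct bookkeeping with the expansion of Lemma~\ref{lem:tridiagonal det}, and no induction on path length or additional combinatorial input is needed.
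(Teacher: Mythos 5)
Your proposal is correct and follows essentially the same route as the paper's proof: both arguments exploit that $\det(K_P)=\det(K_Q)$ is an identity of polynomials in algebraically independent vertex and edge variables, extract the $S=\emptyset$ term of Lemma~\ref{lem:tridiagonal det} to match the vertex multisets, and compare the $|S|=1$ terms (for each fixed edge color) together with the common product $M=\prod_{v}k_{\lambda(v)}$ to recover both the edge multiset and the endpoint color pairs. Your use of specializations and coefficient extraction is just a slightly more formal packaging of the paper's bidegree comparison of monomials.
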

\begin{proof}
We denote the entries of $K_P$ by $p_{ij}$ and those of of $K_Q$ by $q_{ij}$.

\noindent\textbf{1)} By Lemma \ref{lem:tridiagonal det} we know that there exist the monomial $\prod_{i=1}^m p_{ii}$ in $\det(K_P)$ and similarly, $\prod_{i=1}^m q_{ii}$ in $\det(K_Q)$ corresponding to $S=\emptyset$ in the sum. However, as these are the only monomials in the determinant that have vertex degree $m$ and edge degree $0$, they need to be equal to each other. Thus, the multiset of vertex colors used in $P$ and $Q$ are the same. Similarly, for every edge $\{i,j\}$, notice that there exists a monomial $p_{ij}^2\prod_{v\in V_P\setminus \{i,j\}} p_{vv}$ in $\det(K_P)$ and similarly, $q_{ij}^2\prod_{v\in V_P\setminus \{i,j\}} q_{vv}$ in $\det(K_Q)$. As $\det(K_P)$ and $\det(K_Q)$ are equal as polynomials, we know that the collection of such monomials corresponding to any edge color has to match. Thus, we can conclude that the multiset of edge colors in $P$ and $Q$ also has to be equal. 


\noindent \textbf{2)} There can exist multiple edges of the same color. However, from (1), we know that the multiset of edge colors used in $P$ and $Q$ remains the same. This implies that for any edge $\{a,b\}$ in $P$ of a given edge color, there must exist an edge $\{x,y\}$ in $Q$ of the same color such that the corresponding monomials match. This gives us that 
\[
p_{ab}^2\prod_{v\in V(P)\setminus \{a,b\}}p_{vv}= q_{xy}^2\prod_{v\in V(Q)\setminus \{x,y\}}q_{vv},
\]
implying that $\prod_{v\in V(P)\setminus \{a,b\}}p_{vv}= \prod_{v\in V(Q)\setminus \{x,y\}}q_{vv}$ as $\{a,b\}$ and $\{x,y\}$ have the same edge color. As we also know from 1) that $\prod_{i=1}^m p_{ii}=\prod_{i=1}^m q_{ii}$, we can conclude that $p_{aa}p_{bb}$ is equal to $p_{xx}p_{yy}$. Thus, we can conclude that for a given edge color, the vertex colors adjacent to each edge of that color remains the same in $P$ and $Q$. 
\end{proof}

We now state a coloring condition which is dependent on the parity of the number of vertices in the path.

\begin{lemma}
\label{lem:OddEvenPathVertexColorSets}
Let \( P \) and \( Q \) be two colored paths on $m$ vertices with $\det(K_P)=\det(K_Q)$. Then the following conditions hold:
\begin{enumerate}
    \item If $m$ is odd, then the multiset of odd vertex colors (and similarly even vertex colors) used in \(P\) and \(Q\) remains the same.
    \item If $m$ is even, then the multiset of odd edge colors (and similarly even edge colors) used in \(P\) and \(Q\) remains the same.
\end{enumerate}
\end{lemma}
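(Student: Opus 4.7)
The plan is to extract the desired parity information by cleverly specializing one group of color variables in the formula of Lemma~\ref{lem:tridiagonal det} and reading off an extreme coefficient of the resulting polynomial.

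\textbf{Part (1), $m$ odd.} I will specialize all edge color variables to a single indeterminate $e$. The formula in Lemma~\ref{lem:tridiagonal det} then becomes
$\det(K_P)\big|_{\text{edges}\to e} \;=\; \sum_{s=0}^{(m-1)/2} (-1)^s e^{2s} \sum_{|S|=s,\,S\text{ disjoint}} \prod_{v\in V_P\setminus V(S)} k_{vv}$,
and analogously for $Q$. The highest occurring power is $e^{m-1}$, contributed only by the maximum matchings of $P$ (those of size $(m-1)/2$). The key combinatorial observation is that in any such maximum matching precisely one vertex is uncovered, and that vertex must lie at an odd position: if $p_v$ is uncovered then $S$ restricts to perfect matchings of the two subpaths on $\{p_1,\dots,p_{v-1}\}$ and $\{p_{v+1},\dots,p_m\}$, forcing both $v-1$ and $m-v$ to be even, which (using that $m$ is odd) yields $v$ odd; conversely, each odd $v$ admits exactly one such matching since each side then has a unique perfect matching. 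Hence the coefficient of $e^{m-1}$ equals $(-1)^{(m-1)/2}\sum_{c} n^{\mathrm{odd}}_P(c)\,k_{cc}$, where $n^{\mathrm{odd}}_P(c)$ is the number of odd-position vertices in $P$ of color $c$.

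Equating with the analogous coefficient for $Q$ and using algebraic independence of the distinct vertex-color variables $k_{cc}$, I conclude $n^{\mathrm{odd}}_P(c)=n^{\mathrm{odd}}_Q(c)$ for every color $c$, which is the desired multiset equality. The even-position multiset then follows by subtracting from the total vertex-color multiset, which agrees by Lemma~\ref{lemma:equal determinant conditions}(1).

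\textbf{Part (2), $m$ even.} The plan is dual: specialize all vertex color variables to a single indeterminate $x$. The constant term in $x$ now records the perfect matchings of $P$ (of size $m/2$). A short induction, forced by the fact that any perfect matching must contain $\{p_1,p_2\}$ (the only edge incident to $p_1$), shows that $P_m$ has exactly one perfect matching, namely the odd-position edges $\{p_1,p_2\},\{p_3,p_4\},\dots,\{p_{m-1},p_m\}$. So the constant term equals the single monomial $(-1)^{m/2}\prod_{\{i,j\}\text{ odd-position}} k_{ij}^2$. Equating with $Q$ and comparing exponents of each edge-color variable gives the multiset equality for odd-position edge colors; the even-position edge multiset then matches by subtraction from the total multiset, again by Lemma~\ref{lemma:equal determinant conditions}(1).

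\textbf{Main obstacle.} Neither step is computationally heavy; the conceptual hurdle is pinpointing the right specialization so that the extreme coefficient isolates exactly the parity-indexed data demanded by the lemma. The combinatorial facts that maximum matchings in an odd path omit only odd-position vertices, and that an even path has a unique (odd-indexed) perfect matching, are the technical cornerstones that make the monomial- and coefficient-matching arguments work cleanly.
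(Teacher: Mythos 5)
Your proof is correct and follows essentially the same route as the paper: both arguments isolate the extreme bidegree part of $\det(K_P)$ coming from maximum matchings (odd $m$: vertex degree one, edge degree $(m-1)/2$, with the uncovered vertex necessarily at an odd position) respectively the unique perfect matching (even $m$: the odd-position edges), and compare the resulting monomials. Your specialization to a single edge (resp.\ vertex) indeterminate is just a clean repackaging of the paper's direct comparison of monomials by vertex and edge degree.
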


\begin{proof}
 \textbf{1)} When $m$ is odd, we first pick an odd vertex $a$. By Lemma \ref{lem:tridiagonal det}, we consider the monomial in the sum corresponding to the set $S$, where $S$ is the collection of all edges that are not adjacent to $a$. This monomial has the following structure:
\[
p_{12}^2p_{34}^2\ldots p_{a-2a-1}^2p_{aa}p_{a+1a+2}^2\ldots p_{m-1m}^2. 
\]
It is clear that these are the only monomials in $\det(K_P)$ with vertex degree one and edge degree $(m-1)/2$, and the corresponding vertex color $p_{aa}$ can only be of an odd vertex. As the set of these monomials has to match in $\det(K_P)$ and $\det(K_Q)$, we can conclude that the multiset of odd vertex colors remains the same in $P$ and $Q$. Furthermore, since we already know from Lemma \ref{lemma:equal determinant conditions} that the multiset of all vertex colors also remain the same, we can also conclude that the multiset of even vertex colors remains the same in $P$ and $Q$.  

\noindent \textbf{2)}
When $m$ is even, using Lemma \ref{lem:tridiagonal det}, we consider the monomial in the sum corresponding to the set of all odd edges. (Here, by odd edges we mean edges of the form $\{i,i+1\}$ where $i$ is odd.) This turns out to be the unique monomial  $p_{12}^2p_{34}^2\ldots p_{m-1m}^2$ in $\det(K_P)$ with vertex degree zero and edge degree $m/2$. As this monomial has to match with the corresponding monomial in $\det(K_Q)$, we can conclude that the muliset of odd edge colors remains the same in $P$ and $Q$. Combining this result with Lemma \ref{lemma:equal determinant conditions}, we can also conclude that the multiset of even edge colors remains the same in $P$ and $Q$.   
\end{proof}

Based on the conditions obtained in Lemma \ref{lemma:equal determinant conditions} and \ref{lem:OddEvenPathVertexColorSets}, we believe that the only possible non trivial color configurations of $P$ and $Q$ such that $\det(K_P)$ is equal to $\det(K_Q)$, are the ones obtained in Theorem \ref{theorem:pathDetTypeEven} and \ref{theorem:PathDetTypeOdd} and their reflections. Thus, we state the following conjecture:

\begin{conjecture}
Let $P$ and $Q$ be two colored paths on $m$ vertices with $\det(K_P)=\det(K_Q)$. 
\begin{enumerate}
    \item If $m$ is even, then one of the following conditions holds:
    \begin{itemize}
        \item $Q$ is identical to $P$, 
        \item $Q$ is a reflection of $P$, 
        \item $P$ and $Q$ satisfy the color configuration stated in Theorem \ref{theorem:pathDetTypeEven} or is a reflection of the same.
\end{itemize}
\item Similarly, if $m$ is odd, then one of the following conditions holds:
\begin{itemize}
        \item $Q$ is identical to $P$, 
        \item $Q$ is a reflection of $P$, 
        \item $P$ and $Q$ satisfy the color configuration stated in Theorem \ref{theorem:PathDetTypeOdd} or is a reflection of the same.
\end{itemize}
\end{enumerate}
\end{conjecture}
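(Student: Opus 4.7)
The plan is to proceed by strong induction on the number of vertices $m$. The base cases $m \in \{1,2\}$ are exactly Lemma \ref{Lemma:Pathlength 1 and 2}, and $m = 3$ admits a short direct verification via the explicit formula $\det(K_P) = p_{11}p_{22}p_{33} - p_{12}^2 p_{33} - p_{23}^2 p_{11}$: comparing monomials by vertex and edge degree against the analogous expression for $Q$ forces either $P=Q$ or reflection, mirroring the case analysis in the length-two argument of Lemma \ref{Lemma:Pathlength 1 and 2}.

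For the inductive step ($m\ge 4$), the first move is to harvest every color-level constraint implied by Lemmas \ref{lemma:equal determinant conditions} and \ref{lem:OddEvenPathVertexColorSets}. These force $P$ and $Q$ to share the same multisets of vertex and edge colors, the same adjacency data for each edge color, and (depending on parity) the same parity-refined multisets of edges or vertices. Together, these conditions already restrict $Q$ to differ from $P$ only by a color-preserving permutation of positions that respects parity, which is the correct ambient symmetry group to analyze.

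The technical heart of the argument will then exploit Lemma \ref{lem:tridiagonal det} by examining monomials at specific edge degrees. At the top degree $\lfloor m/2\rfloor$, near-perfect matchings have a rigid form: for even $m$ the unique perfect matching produces one pure product of squared odd-edge colors, while for odd $m$ the $(m+1)/2$ near-perfect matchings are indexed by the omitted odd-position vertex, yielding one monomial for each odd-indexed vertex color. Matching these monomials between $P$ and $Q$ induces a bijection of odd-indexed vertex colors. Descending to edge degrees $\lfloor m/2\rfloor - 1$ and $\lfloor m/2\rfloor - 2$, where the unmatched pairs or triples introduce additional vertex variables, extracts local constraints linking each edge color to its neighboring vertex colors. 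Combining these with the recurrence
\[
\det(K_{P_m}) = p_{mm}\det(K_{P_{m-1}}) - p_{m-1,m}^2\det(K_{P_{m-2}})
\]
and its analog for $Q$ should permit a reduction to the inductive hypothesis on the subpaths of $m-1$ and $m-2$ vertices.

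The main obstacle I anticipate is a rigidity/gluing problem: even after all the local monomial constraints are in place, one must rule out hybrid colorings that are locally of reflection type on one segment of the path and of the Theorem \ref{theorem:pathDetTypeEven} or \ref{theorem:PathDetTypeOdd} type on another segment. Such hybrids pass every multiset test and many low-degree monomial identities, yet should violate the recurrence at the junction between segments. Formalizing this will likely require a careful case analysis along each such boundary, probably tracked by an invariant (for instance, a signed count of positions where $P$ and $Q$ disagree, additive across segments) that forces a contradiction with $\det(K_P) = \det(K_Q)$ unless the global configuration collapses to one of the three permitted types. Sorting out this junction behavior will, in my estimation, constitute the bulk of the technical work.
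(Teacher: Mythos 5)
The statement you are attempting is presented in the paper as an open conjecture: the authors do not prove it. What the paper establishes are precisely the two necessary-condition lemmas you cite (Lemma \ref{lemma:equal determinant conditions} and Lemma \ref{lem:OddEvenPathVertexColorSets}), after which the authors say only that these conditions lead them to \emph{believe} the classification; no inductive argument or case analysis is given. So there is no proof in the paper to compare yours against, and your proposal has to stand on its own.

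On its own it is a plan rather than a proof, with two genuine gaps. First, the proposed reduction via the recurrence $\det(K_{P_m}) = p_{mm}\det(K_{P_{m-1}}) - p_{m-1,m}^2\det(K_{P_{m-2}})$ to the inductive hypothesis on shorter paths does not go through as stated: equality of $\det(K_P)$ and $\det(K_Q)$ for the full paths does not imply equality of the determinants of the corresponding subpaths, which is the input the induction hypothesis requires. The two terms on the right-hand side have the same vertex-degree parity (apply Lemma \ref{lem:VertexCoeffOddEvenPath} to $m-1$ and $m-2$ and note that multiplying by $p_{mm}$ flips the parity of the first term), and their edge-degree ranges overlap, so neither the parity trick used elsewhere in the paper nor a degree count lets you split the single identity $\det(K_P)=\det(K_Q)$ into two separate subpath identities; since vertex colors may repeat, you also cannot isolate the terms containing $p_{mm}$. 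The situation here is genuinely different from Lemma \ref{lemma:uniformEdgeNoNontrivial}, where the uniform edge coloring makes the recurrence coefficients identical for $P$ and $Q$ and the argument runs forward from the first discrepancy; without that uniformity the same bookkeeping breaks down. Second, the ``rigidity/gluing'' obstacle you name --- ruling out colorings that are locally of reflection type on one segment and of the Theorem \ref{theorem:pathDetTypeEven}/\ref{theorem:PathDetTypeOdd} type on another --- is essentially the entire content of the conjecture beyond the multiset constraints, and your proposal explicitly defers it. Until that analysis is carried out, the argument establishes nothing beyond what Lemmas \ref{lemma:equal determinant conditions} and \ref{lem:OddEvenPathVertexColorSets} already give, and the statement remains open.
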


This brings us to pose the generalized version of the problem, which is the following:
\begin{problem}
Let $\cg_1$ and $\cg_2$ be two arbitrary colored graphs. Then what are the structural and coloring conditions that $\cg_1$ and $\cg_2$ need to satisfy such that $\det(K_{\cg_1})=\det(K_{\cg_2})$?   
\end{problem}

To the best of our knowledge, this problem has not been studied before. The only related study done in this direction is in \cite{LiptonBishnoi2004}, where the authors study the conditions on when the generalized characteristic polynomial of two graphs are equal. For a given graph $G$ with adjacency matrix $A$, the characteristic polynomial of $G$ is defined as $\det(A-\lambda I)$. The generalization of the characteristic polynomial is denoted by $\mathcal{L}_G(x,y,\lambda)$ and is obtained by replacing $A$ with $A(x,y)$, where the $1$s are replaced by $x$ and the $0$s by $y$. The authors proved in \cite{LiptonBishnoi2004} that if two graphs are \textit{co-spectral} and their complements are also co-spectral, then they have the same $\mathcal{L}$-polynomial. Interestingly, there is a subtle connection between the $\mathcal{L}$-polynomial and the determinant of the concentration matrix of $\cg$. Specifically, if $\cg$ has uniform coloring, then it is easy to see that $\det(K_\cg)$ is equal to $\mathcal{L}_G(x,0,\lambda)$. Here, $x$ corresponds to the edge color and $\lambda$ corresponds to the vertex color. However, the problem that we present is not necessarily confined to uniform coloring, and hence cannot be solved by the co-spectral property. 

By performing a computational study, we obtained the following example of two uniform vertex colored graphs and two uniform edge colored graphs whose concentration matrices have the same determinant.

\begin{example}
Let $\cg_1$ and $\cg_2$ be the two uniform vertex colored graphs shown in Figure \ref{figure:2 uniform vertex co-spectral}. Computing the determinant of $K_{\cg_1}$ and $K_{\cg_2}$ gives us that
\begin{eqnarray*}
&&\det(K_{\cg_1})=\det(K_{\cg_2}) = \\
&&\textcolor{red}{k_{11}}^6 
+ 6\,\textcolor{red}{k_{11}}^2 \textcolor{blue}{k_{12}}^4 
- 6\,\textcolor{red}{k_{11}}^4 \textcolor{blue}{k_{12}}^2 
- \textcolor{red}{k_{11}}^4 \textcolor{green}{k_{23}}^2 
- \textcolor{blue}{k_{12}}^4 \textcolor{green}{k_{23}}^2 
+ 3\,\textcolor{red}{k_{11}}^2 \textcolor{blue}{k_{12}}^2 \textcolor{green}{k_{23}}^2 
- 2\,\textcolor{red}{k_{11}}^2 \textcolor{blue}{k_{12}}^3 \textcolor{green}{k_{23}}.
\end{eqnarray*}
Similarly, let $\cg_3$ and $\cg_4$ be the uniform edge colored graphs shown in Figure \ref{figure:2 uniform edge co-spectral}. Computing the determinant of $K_{\cg_3}$ and $K_{\cg_4}$ gives us that
\begin{eqnarray*}
&&\det(K_{\cg_3})=\det(K_{\cg_4}) = \\
&& \textcolor{yellow}{k_{22}}^5 \textcolor{red}{k_{11}}
- \textcolor{blue}{k_{12}}^6
+ 5\,\textcolor{yellow}{k_{22}}^2 \textcolor{blue}{k_{12}}^4
+ 2\,\textcolor{yellow}{k_{22}} \textcolor{red}{k_{11}} \textcolor{blue}{k_{12}}^4
- 2\,\textcolor{yellow}{k_{22}}^4 \textcolor{blue}{k_{12}}^2
- 5\,\textcolor{yellow}{k_{22}}^3 \textcolor{red}{k_{11}} \textcolor{blue}{k_{12}}^2.
\end{eqnarray*}
\begin{figure}
\begin{center}
\begin{tikzpicture}[scale=0.65]
        \node[draw, circle, fill=red] (1) at (0, 2) {};
        \node[draw, circle, fill=red] (2) at (2, 2) {};
        \node[draw, circle, fill=red] (3) at (4, 2) {};
        \node[draw, circle, fill=red] (4) at (4, 0) {};
        \node[draw, circle, fill=red] (5) at (2, 0) {};
        \node[draw, circle, fill=red] (6) at (0, 0) {};
        
        \draw[thick, blue] (1) -- (2);
        \draw[thick, blue] (2) -- (3);
        \draw[thick, blue] (2) -- (5);
        \draw[thick, blue] (3) -- (4);
        \draw[thick, green] (4) -- (5);
        \draw[thick, blue] (5) -- (6);
        \draw[thick, blue] (6) -- (1);
    
        \node at (-0.5, 2.6) {1};
        \node at (2, 2.6) {2};
        \node at (4.5,2.6) {3};
        \node at (4.5, -0.6) {4};
        \node at (2, -0.6) {5};
        \node at (-0.5, -0.6) {6};

    \end{tikzpicture}
    \hspace{1cm}
    \centering
    \begin{tikzpicture}[scale=0.65]
        \node[draw, circle, fill=red] (1) at (0, 2) {};
        \node[draw, circle, fill=red] (2) at (2, 2) {};
        \node[draw, circle, fill=red] (3) at (4, 2) {};
        \node[draw, circle, fill=red] (4) at (4, 0) {};
        \node[draw, circle, fill=red] (5) at (2, 0) {};
        \node[draw, circle, fill=red] (6) at (0, 0) {};
        
        \draw[thick, blue] (1) -- (2);
        \draw[thick, blue] (2) -- (3);
        \draw[thick, blue] (3) -- (6);
        \draw[thick, blue] (3) -- (4);
        \draw[thick, green] (4) -- (5);
        \draw[thick, blue] (5) -- (6);
        \draw[thick, blue] (4) -- (1);
    
        \node at (-0.5, 2.6) {1};
        \node at (2, 2.6) {2};
        \node at (4.5,2.6) {3};
        \node at (4.5, -0.6) {4};
        \node at (2, -0.6) {5};
        \node at (-0.5, -0.6) {6};
\end{tikzpicture}
\caption{$\cg_1$ and $\cg_2$}\label{figure:2 uniform vertex co-spectral}
\end{center}

\begin{center}
\begin{tikzpicture}[scale=0.65]
        \node[draw, circle, fill=red] (1) at (0, 2) {};
        \node[draw, circle, fill=yellow] (2) at (2, 2) {};
        \node[draw, circle, fill=yellow] (3) at (4, 2) {};
        \node[draw, circle, fill=yellow] (4) at (4, 0) {};
        \node[draw, circle, fill=yellow] (5) at (2, 0) {};
        \node[draw, circle, fill=yellow] (6) at (0, 0) {};
        
        \draw[thick, blue] (1) -- (2);
        \draw[thick, blue] (2) -- (3);
        \draw[thick, blue] (2) -- (5);
        \draw[thick, blue] (3) -- (4);
        \draw[thick, blue] (4) -- (5);
        \draw[thick, blue] (5) -- (6);
        \draw[thick, blue] (6) -- (1);
    
        \node at (-0.5, 2.6) {1};
        \node at (2, 2.6) {2};
        \node at (4.5,2.6) {3};
        \node at (4.5, -0.6) {4};
        \node at (2, -0.6) {5};
        \node at (-0.5, -0.6) {6};

    \end{tikzpicture}
    \hspace{1cm}
    \centering
    \begin{tikzpicture}[scale=0.65]
        \node[draw, circle, fill=red] (1) at (0, 2) {};
        \node[draw, circle, fill=yellow] (2) at (2, 2) {};
        \node[draw, circle, fill=yellow] (3) at (4, 2) {};
        \node[draw, circle, fill=yellow] (4) at (4, 0) {};
        \node[draw, circle, fill=yellow] (5) at (2, 0) {};
        \node[draw, circle, fill=yellow] (6) at (0, 0) {};
        
        \draw[thick, blue] (1) -- (2);
        \draw[thick, blue] (2) -- (3);
        \draw[thick, blue] (3) -- (6);
        \draw[thick, blue] (3) -- (4);
        \draw[thick, blue] (4) -- (5);
        \draw[thick, blue] (5) -- (6);
        \draw[thick, blue] (4) -- (1);
    
        \node at (-0.5, 2.6) {1};
        \node at (2, 2.6) {2};
        \node at (4.5,2.6) {3};
        \node at (4.5, -0.6) {4};
        \node at (2, -0.6) {5};
        \node at (-0.5, -0.6) {6};
\end{tikzpicture}
\caption{$\cg_3$ and $\cg_4$}\label{figure:2 uniform edge co-spectral}
\end{center}
\end{figure}
\end{example}

\section*{Acknowledgements}
The authors are grateful for helpful discussions with Mathias Drton, Benjamin Hollering, Tobias Boege and Jane Ivy Coons. Both authors received funding from the European Research Council (ERC) under the European Union’s Horizon 2020 research and innovation programme (grant agreement No. 883818).

\printbibliography

@Article{DaviesMarigliano:2021,
author = {Davies, Isobel and Marigliano, Orlando},
title = {Coloured graphical models and their symmetries},
journal = {Le Matematiche},
volume  = {76},
number  = {2},
pages   = {501-515},
year    = {2021},
doi     = {10.4418/2021.76.2.13}
}

@Article{JonesWest:2005,
author = {Jones, Beatrix and West, Mike},
title = {Covariance decomposition in undirected Gaussian graphical models},
journal = {Biometrika},
volume = {92},
number = {4},
pages = {779--786},
year = {2005}
}

@article{TohHorimoto2002,
author    = {Toh, Hiroyuki and Horimoto, Katsuhisa},
title     = {Inference of a genetic network by a combined approach of cluster analysis and graphical gaussian modeling},
journal   = {Bioinformatics},
volume    = {18},
number    = {2},
year      = {2002},
pages     = {287--297}
}

@article{BMS+2019,
author    = {Bhushan, Nitin and Mohnert, Florian and Sloot, Daniel and Jans, Lise and Albers, Casper and Steg, Linda},
title     = {Using a gaussian graphical model to explore relationships between items and variables in environmental psychology research},
journal   = {Frontiers in Psychology},
volume    = {10},
number    = {},
year      = {2019},
pages     = {}
}

@article{MisraSullivant2021,
author    = {Misra, Pratik and Sullivant, Seth},
title     = {Gaussian graphical models with toric vanishing ideals},
journal   = {Annals of the Institute of Statistical Mathematics},
volume    = {73},
number    = {4},
year      = {2021},
pages     = {757-–785}
}

@article{GeigerMeekSturmfels2006,
author    = {Geiger, Dan and Meek, Christopher and Sturmfels, Bernd},
title     = {On the toric algebra of graphical models},
journal   = {The Annals of Statistics},
volume    = {34},
number    = {3},
year      = {2006},
pages     = {1463--1492}
}

@article{MisraSullivant2022,
author    = {Misra, Pratik and Sullivant, Seth},
title     = {Directed Gaussian graphical models with toric vanishing ideals},
journal   = {Advances in Applied Mathematics},
volume    = {138},
number    = {102345},
year      = {2022}
}

@article{WitAbbruzzo2015,
author    = {Wit, Ernst and Abbruzzo, Antonino},
title     = {Factorial graphical models for dynamic networks},
journal   = {Network Science},
volume    = {3},
number    = {1},
year      = {2015},
pages     = {37--57}
}

@article{HojsgaardLauritzen2008,
author    = {Højsgaard, Søren and Lauritzen, Steffen L.},
title     = {Graphical Gaussian models with edge and vertex symmetries},
journal   = {Journal of the Royal Statistical Society, Series B. Statistical Methodology},
volume    = {70},
number    = {5},
year      = {2008},
pages     = {1005--1027}
}

@article{Vinciotti+2016,
author    = {Vinciotti, Veronica and Augugliaro, Luigi and Abbruzzo, Antonino and Wit, Ernst C.},
title     = {Model selection for factorial Gaussian graphical models with an application to dynamic regulatory networks},
journal   = {Statistical Applications in Genetics and Molecular Biology},
volume    = {15},
number    = {3},
year      = {2016},
pages     = {193--212}
}

@article{SturmfelsUhler2010,
author    = {Sturmfels, Bernd and Uhler, Caroline},
title     = {Multivariate Gaussian, semidefinite matrix completion, and convex algebraic geometry},
journal   = {Annals of the Institute of Statistical Mathematics},
volume    = {62},
number    = {4},
year      = {2010},
pages     = {603--638}
}

@article{Coons+2023,
author    = {Coons, Jane Ivy and Maraj, Aida and Misra, Pratik and Sorea, Miruna-Stefana},
title     = {Symmetrically colored Gaussian graphical models with toric vanishing ideals},
journal   = {SIAM Journal on Applied Algebra and Geometry},
volume    = {7},
number    = {1},
year      = {2023},
pages     = {133--158}
}

@article{LiptonBishnoi2004,
author    = {Lipton, Richard J. and Vishnoi, Nisheeth},
title     = {A Generalization of the Characteristic Polynomial of a Graph},
journal   = {35th Southeastern International Conference on Combinatorics, Graph Theory and Computing, Boca Raton},
year      = {2004}
}
\end{document}